\newcommand{\dt}{September 16, 2008}
\newcommand{\bk}{\bullet}
\newcommand{\ds}{\mbox{\rm Dis}}
\newcommand{\ci}{\circ}
\newcommand{\D}{\bullet}
\newcommand{\G}{\mathcal{G}}
\newcommand{\tr}{{\rm trace}}
\newcommand{\qc}{{quasi-complete }}
\newcommand{\QC}{ {\rm QC} }
\newcommand{\qs}{{quasi-star  }}
\newcommand{\QS}{ {\rm QS} }
\newcommand{\thr}{ {\rm Th} }
\newcommand{\qcc}{ {\rm qc} }
\newcommand{\qss}{ {\rm qs} }
\newcommand{\adj}{ {\rm Adj} }
\newcommand{\DF}{ {\rm Diff} }
\newcommand{\sd}[1]{\begin{sideways}$#1$\end{sideways}}
\newtheorem{theorem}{Theorem}
\newtheorem{corollary}{Corollary}
\newtheorem{lemma}{Lemma}
\begin{document}
\begin{sloppypar}

\pagestyle{myheadings} \markboth{\rm \sc \footnotesize \'{A}brego,
Fern\'{a}ndez, Neubauer, Watkins, \dt, \today}{\rm \sc \footnotesize
\'{A}brego, Fern\'{a}ndez, Neubauer, Watkins, \dt, \today}

\title{Sum of squares of degrees in a graph\footnotetext{\hskip-0.58truecm
$^1$ Partially supported by CIMAT.\\
 Current address: Centro de Investigaci\'on en Matem\'aticas, A.C.,\\
 Jalisco S/N, Colonia Valenciana, 36240, Guanajuato, GTO., M\'exico.}}
\author{Bernardo M. {\'A}brego $^1$\\
Silvia Fern{\'a}ndez-Merchant$^{1}$\\
Michael G. Neubauer\\
William Watkins\\ \\
Department of Mathematics\\
California State University, Northridge\\
18111 Nordhoff St, Northridge, CA, 91330-8313, USA\\
\footnotesize{email:\texttt{\{bernardo.abrego, silvia.fernandez,}}\\ \footnotesize{\texttt{\hskip 2truecm michael.neubauer, bill.watkins\}@csun.edu}}
}

\date{\dt \\ \today }
\maketitle

\begin{abstract}
Let $\G(v,e)$ be the set of all simple graphs with $v$ vertices
and $e$ edges and let  $P_2(G)=\sum d_i^2$ denote the sum of the squares of the degrees, $d_1, \ldots, d_v$, of the vertices of $G$.

It is known that the maximum value of $P_2(G)$ for $G \in \G(v,e)$ occurs at one or both of two special graphs in $\G(v,e)$---the \qs graph or the \qc graph.  For each pair $(v,e)$, we determine which of these two graphs has the larger value of $P_2(G)$.  We also determine all pairs $(v,e)$ for which the values of $P_2(G)$ are the same for the \qs and the \qc graph.  In addition to the \qs and \qc graphs, we  find all other graphs in $\G(v,e)$ for which the maximum value of $P_2(G)$ is attained.  Density questions posed by previous authors are examined.

 \end{abstract}

{\bf AMS Subject Classification:} 05C07, 05C35\\
{\bf Key words:} graph, degree sequence, threshold graph, Pell's Equation, partition, density

\newpage
\section{Introduction} \label{sec:intro}
Let $\G(v,e)$ be the set of all simple graphs with $v$ vertices
and $e$ edges and let  $P_2(G)=\sum d_i^2$ denote the sum of the squares of the degrees, $d_1, \ldots, d_v$, of the vertices of $G$.
The purpose of this paper is to finish the solution of an old
problem:
\begin{enumerate}
\item What is the maximum value of $P_2(G)$, for a graph $G$ in
$\G(v,e)$? \item For which graphs $G$ in $\G(v,e)$ is the maximum
value of $P_2(G)$ attained?
\end{enumerate}

Throughout, we  say that a graph $G$ is {\it optimal} in
$\G(v,e)$, if $P_2(G)$ is maximum and we denote this maximum value
by $\max(v,e)$.

These problems were first investigated by Katz \cite{Ka} in 1971 and
by R. Ahlswede and G.O.H. Katona \cite{AK} in 1978.  In his review
of the paper by  Ahlswede and Katona. P. Erd\H{o}s \cite{Er}
commented that ``the solution is more difficult than one would
expect." Ahlswede and Katona were interested in an equivalent form
of the problem: they wanted to find the maximum number of pairs of
different edges that have a common vertex.  In other words, they
want to maximize the number of edges in the line graph $L(G)$ as $G$
ranges over $\G(v,e)$.  That these two formulations of the problem
are equivalent follows from an examination of the vertex-edge
incidence matrix $N$ for a graph $G \in \G(v,e)$:
\begin{eqnarray*}
\tr((NN^T)^2)&=& P_2(G)+2e\\
\tr((N^TN)^2)&=&\tr(A_L(G)^2)+4e,
\end{eqnarray*}
where $A_L(G)$ is the adjacency matrix of the line graph of $G$.
Thus $P_2(G)=\tr(A_L(G)^2)+2e$. ($\tr(A_L(G)^2)$ is twice the number
of edges in the line graph of $G$.)

Ahlswede and Katona showed that the maximum value $\max(v,e)$ is
always attained at one or both of two special graphs in $\G(v,e)$.

They called the first of the two special graphs a {\it
quasi-complete} graph. The \qc graph in $\G(v,e)$ has the largest possible
complete subgraph $K_k$.  Let $k,j$ be the unique integers such that

\[
e=\binom{k+1}{2}-j = \binom{k}{2}+k-j,\mbox{ where } 1 \leq j \leq
k.
\]

The \qc graph in $\G(v,e)$, which is denoted by $\QC(v,e)$, is
obtained from the complete graph on the $k$ vertices $1,2, \ldots,
k$ by adding $v-k$ vertices $k+1, k+2, \ldots, v$, and the edges
$(1,k+1)$, $(2,k+1), \ldots, (k-j,k+1)$.

The other special graph in $\G(v,e)$ is the {\it quasi-star}, which
we denote by $\QS(v,e)$.  This graph has as many dominant vertices
as possible.  (A {\it dominant vertex} is one with maximum degree
$v-1$.) Perhaps the easiest way to describe $\QS(v,e)$ is to say
that it is the graph complement of $\QC(v,e^\prime)$, where
$e^\prime = \binom{v}{2}-e$.

Define the function $C(v,e)$ to be the sum of the squares of the
degree sequence of the quasi-complete graph in $\G(v,e)$, and define
$S(v,e)$ to be the sum of the squares of the degree sequence of the
quasi-star graph in $\G(v,e)$. The value of $C(v,e)$ can be computed
as follows:

Let $e=\binom{k+1}{2}-j$, with $1 \leq j \leq k$.  The degree
sequence of the \qc graph in $\G(v,e)$ is
\[
d_1= \cdots = d_{k-j} = k, \quad d_{k-j+1}= \cdots = d_{k}=k-1,
\quad d_{k+1}=k-j, \quad d_{k+2} = \cdots = d_v=0.
\]
Hence
\begin{equation} \label{eqn:Cve}
      C(v,e)  =  j(k-1)^2 + (k-j)k^2 + (k-j)^2.
\end{equation}
Since $\QS(v,e)$ is the complement of $\QC(v,e^\prime)$, it is
straightforward to show that
\begin{eqnarray} \label{eqn:SCconnection}
    S(v,e) & = & C(v,e^\prime) +(v-1)(4e-v(v-1))
\end{eqnarray}
from which it follows that, for fixed $v$, the function
$S(v,e)-C(v,e)$ is point-symmetric about the middle of the
interval $0 \leq e \leq \binom{v}{2}$. In other words,
\begin{eqnarray*}
     S(v,e)-C(v,e) & = & - \left( S(v,e^\prime)-C(v,e^\prime)
     \right).
\end{eqnarray*}
It also follows from Equation (\ref{eqn:SCconnection}) that
$\QC(v,e)$ is optimal in $\G(v,e)$ if and only if $\QS(v,e^\prime)$
is optimal in $\G(v,e^\prime)$. This allows us to restrict our
attention to values of $e$ in the interval $ [0,\binom{v}{2}/2]$ or
equivalently the interval $[\binom{v}{2}/2,\binom{v}{2}]$. On
occasion, we will do so but we will always state results for all
values of $e$.

As the midpoint of the range of values for $e$ plays a recurring
role in what follows, we denote it by
\[
m=m(v) =  \frac{1}{2} \binom{v}{2}
\]
and define $k_0=k_0(v)$ to be the integer such that
\begin{equation} \label{eqn:k0}
   \binom{k_0}{2}  \leq m <  \binom{k_0+1}{2}.
\end{equation}
To state the results of \cite{AK} we need one more notion, that of
the distance from $\binom{k_0}{2}$ to $m$. Write
\[
    b_0 =b_0(v) = m-\binom{k_0}{2}.
\]

We are now ready to summarize the results of \cite{AK}:

\begin{description}
\item[Theorem 2 \cite{AK}] $\max(v,e)$ is the larger of the two
values $C(v,e)$ and $S(v,e)$.
 \item[Theorem 3 \cite{AK}]
$\max(v,e)=S(v,e)$ if $0 \leq e <
m-\frac{v}{2}$ and $\max(v,e)=C(v,e)$ if
$m+\frac{v}{2} < e \leq \binom{v}{2}$
\item[Lemma 8 \cite{AK}]
If $2b_0 \geq k_0$, or $2v-2k_0-1  \leq 2b_0< k_0$, then
\begin{eqnarray*}
C(v,e) \leq S(v,e) & \mbox{ for all } & 0 \leq e \leq m \mbox{
and} \\
 C(v,e) \geq S(v,e) & \mbox{ for all } & m \leq e \leq
 \binom{v}{2}.
\end{eqnarray*}
If $2b_0 < k_0$ and $2k_0+2b_0< 2v-1$, then there exists an $R$ with
$b_0 \leq R \leq \min \left\{ v/2,k_0-b_0 \right\}$ such that
\begin{eqnarray*}
   C(v,e) \leq S(v,e) & \mbox{ for all } & 0 \leq e \leq m-R \\
   C(v,e) \geq S(v,e) & \mbox{ for all } & m-R \leq e \leq m
   \\
   C(v,e) \leq S(v,e) & \mbox{ for all } & m \leq e \leq m+R \\
   C(v,e) \geq S(v,e) & \mbox{ for all } & m+R \leq e \leq
   \binom{v}{2}.
\end{eqnarray*}
\end{description}


Ahlswede and Katona pose some open questions at the end of
\cite{AK}. ``Some strange number-theoretic combinatorial questions
arise.  What is the relative density of the numbers $v$ for which $R=0$
[$\max(v,e)=S(v,e)$ for all $0 \leq e < m$ and
$\max(v,e)=C(v,e)$ for all $m<e\leq \binom{v}{2}$]?"

This is the point of departure for our paper. Our first main result,
Theorem \ref{thm:main1}, strengthens Ahlswede and Katona's Theorem
2; not only does the maximum value of $P_2(G)$ occur at either the
\qs or \qc graph in $\G(v,e)$, but all optimal graphs in $\G(v,e)$
are related to the \qs or \qc graphs via their so-called diagonal
sequence.  As a result of their relationship to the \qs and \qc
graphs, all optimal graphs can be and are described in our second
main result, Theorem \ref{thm:main2}.    Our third main result,
Theorem \ref{thm:main3}, is a refinement of Lemma 8 in \cite{AK}.
Theorem \ref{thm:main3} characterizes the values of $v$ and $e$ for
which $S(v,e)=C(v,e)$ and gives an explicit expression for the value
$R$ in Lemma 8 of \cite{AK}.  Finally, the ``strange
number-theoretic combinatorial" aspects of the problem, mentioned by
Ahlswede and Katona, turn out to be Pell's Equation $y^2-2x^2=\pm
1$. Corollary \ref{cor:density}  answers  the density question posed
by Ahlswede and Katona. We have just recently learned that Wagner
and Wang \cite{WW} have independently answered this question as
well. Their approach is similar to ours, as they also find an
expression for $R$ in Lemma 8 of \cite{AK}.

Before stating the new results, we summarize work on the problem
that followed \cite{AK}.

A generalization of the problem of maximizing the sum of the squares
of the degree sequence was investigated by Katz \cite{Ka} in 1971
and R. Aharoni \cite{Aha} in 1980. Katz's problem was to maximize
the sum of the elements in $A^2$, where $A$ runs over all
$(0,1)$-square matrices of size $n$ with precisely $j$ ones.   He
found the maxima and the matrices for which the maxima are attained
for the special cases where there are $k^2$ ones or where there are
$n^2-k^2$ ones in the $(0,1)$-matrix.  Aharoni \cite{Aha} extended
Katz's results for general $j$ and showed that the maximum is
achieved at one of four possible forms for $A$.

If $A$ is a symmetric $(0,1)$-matrix, with zeros on the diagonal,
then $A$ is the adjacency matrix $A(G)$ for a graph $G$. Now let $G$
be a graph in $\G(v,e)$.  Then the adjacency matrix  $A(G)$ of $G$
is a $v \times v$ $(0,1)$-matrix with $2e$ ones. But $A(G)$
satisfies two additional restrictions: $A(G)$ is symmetric, and all
diagonal entries are zero.  However, the sum of all entries in
$A(G)^2$ is precisely $\sum d_i(G)^2$.
 Thus our problem is essentially the same as Aharoni's in that both ask for the  maximum
of the sum of the elements in $A^2$. The graph-theory problem simply
restricts the set of $(0,1)$-matrices to those with $2e$ ones that
are symmetric and have
 zeros on the diagonal.

Olpp \cite{Olp}, apparently unaware of the work of Ahlswede and
Katona, reproved the basic result that
$\max(v,e)=\max(S(v,e),C(v,e))$, but his results are stated in the
context of  two-colorings of a graph. He investigates a question of
Goodman \cite{Go1,Go2}: maximize the number of monochromatic
triangles in a two-coloring of the complete graph  with a fixed
number of vertices and a fixed number of red edges.  Olpp shows that
Goodman's problem is equivalent to finding the two-coloring that
maximizes the sum of squares of the red-degrees of the vertices.  Of
course, a two-coloring of the complete graph on $v$ vertices gives
rise to two graphs on $v$ vertices: the graph $G$ whose edges are
colored red, and its complement $G^\prime$.  So Goodman's problem is
to find the maximum value of $P_2(G)$ for $G \in \G(v,e)$.

Olpp \cite{Olp} shows that either  the \qs or the \qc graph is
optimal in $\G(v,e)$, but he does not discuss which of the two
values $S(v,e), C(v,e)$ is larger.  He leaves this question
unanswered and he does not attempt to identify all optimal graphs in
$\G(v,e)$.

In 1999, Peled, Pedreschi, and Sterbini \cite{PPS} showed that the
only possible graphs for which the maximum value is attained are the
so-called threshold graphs.   The main result in \cite{PPS} is that
all optimal graphs  are in one of six classes  of threshold graphs.
They end with the remark, ``Further questions suggested by this work
are the existence and uniqueness of the [graphs in $\G(v,e)$] in
each class, and the precise optimality conditions."

Also in 1999, Byer \cite{By} approached the problem in yet another
equivalent context: he studied the maximum number of paths of length
two over all graphs in $\G(v,e)$. Every path of length two in $G$
represents an edge in the line graph $L(G)$, so this problem is
equivalent to studying the graphs that achieve $\max(v,e)$. For each
$(v,e)$, Byer shows that there are at most six graphs in $\G(v,e)$
that achieve the maximum.  These maximal graphs come from among six
general types of graphs for which there is at most one of each type
in $\G(v,e)$. He also extended his results to the problem of finding
the maximum number of monochromatic triangles (or any other fixed
connected graph with 3 edges) among two-colorings of the complete
graph on $v$ vertices, where exactly $e$ edges are colored red.
However, Byer did not discuss how to compute $\max(v,e)$, or how to
determine when any of the six graphs is optimal.

In Section \ref{sec: statements of main results}, we have unified
some of the earlier work on this problem by using partitions,
threshold graphs, and the idea of a diagonal sequence.

\section{Statements of the main results} \label{sec: statements of
main results}
\subsection{Threshold graphs}
All optimal graphs come from a class of special graphs called
\emph{threshold} graphs.  The \qs and \qc graphs are just two among
the many threshold graphs in $\G(v,e)$. The adjacency matrix of a
threshold graph has a special form.  The upper-triangular part of
the adjacency matrix of a threshold graph is left justified and the
number of zeros in each row of the upper-triangular part of the
adjacency matrix does not decrease.  We will show  adjacency
matrices using ``$+$" for the main diagonal, an empty circle
``$\circ$" for the zero entries, and a black dot, ``$\bullet$" for
the entries equal to one.

For example, the graph $G$ whose adjacency matrix is shown in Figure
\ref{fig:example643}(a) is a threshold graph in $\G(8,13)$ with
degree sequence $(6,5,5,3,3,3,1,0)$.

By looking at the upper-triangular part of the adjacency matrix,  we
can associate the distinct partition $\pi = (6,4,3)$ of 13 with the
graph.  In general, the {\it threshold} graph $\thr(\pi) \in
\G(v,e)$ corresponding to a distinct partition $\pi = (a_0, a_1,
\ldots , a_p)$ of $e$, all of whose parts are less than $v$, is the
graph with an adjacency matrix whose upper-triangular part is
left-justified and contains $a_s$ ones in row $s$.  Thus the
threshold graphs in $\G(v,e)$ are in one-to-one correspondence with
the set of distinct partitions, $\ds(v,e)$
 of $e$ with all parts less than $v$:
 \[
 \ds(v,e) = \{ \pi=(a_0,a_1, \ldots, a_p): v>a_0>a_1> \cdots > a_p>0, \sum a_s=e \}
 \]
We denote the adjacency matrix of the threshold graph $\thr(\pi)$
corresponding to the distinct partition $\pi$ by $\adj(\pi)$.

Peled, Pedreschi, and Sterbini \cite{PPS} showed that all optimal
graphs  in a graph class $\G(v,e)$ must be threshold graphs.
 \begin{lemma}\cite{PPS}  \label{lem:OptAreThres}
If $G$ is an optimal graph in $\G(v,e)$, then $G$  is a threshold graph.
\end{lemma}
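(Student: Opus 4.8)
The plan is to argue the contrapositive by a local edge-rotation (a single ``$2$-switch''). If $G$ is \emph{not} a threshold graph, I will exhibit another graph $G'\in\G(v,e)$ with $P_2(G')>P_2(G)$, so $G$ cannot be optimal. The engine is the standard characterization of threshold graphs by \emph{nested neighborhoods}: the stepwise shape of $\adj(\pi)$ described above---left-justified upper-triangular part with a non-decreasing number of zeros per row---is equivalent to requiring that, for every pair of vertices $u,v$, either $N(u)\setminus\{v\}\subseteq N(v)\setminus\{u\}$ or $N(v)\setminus\{u\}\subseteq N(u)\setminus\{v\}$. My first step is therefore to record this equivalence, reading the nesting of the neighborhoods directly off the matrix shape.

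Next, suppose $G$ is not a threshold graph. Then the nested-neighborhood condition fails for some pair $u,v$, so there exist vertices $x\in N(u)\setminus N(v)$ and $y\in N(v)\setminus N(u)$ with $x,y\notin\{u,v\}$. Because $x\in N(u)$ forces $x\ne u$, while $x\notin N(v)$ gives $x\ne y$ (as $y\in N(v)$), the four vertices $u,v,x,y$ are genuinely distinct, and among the pairs $\{ux,vx,uy,vy\}$ exactly $ux$ and $vy$ are edges. Relabeling $u,v$ if necessary, I may assume $d(u)\ge d(v)$, where $d(\cdot)$ denotes degree in $G$.

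The third step is the rotation itself: I form $G'$ from $G$ by deleting the edge $vy$ and inserting the edge $uy$. This is legitimate because $uy\notin E(G)$ and $u\ne y$, so $G'$ is again simple, and since one edge is removed and one added we have $G'\in\G(v,e)$. Only the degrees of $u$ and $v$ change, with $d_{G'}(u)=d(u)+1$ and $d_{G'}(v)=d(v)-1$, while $d_{G'}(y)=d(y)$. Hence $P_2(G')-P_2(G)=\bigl((d(u)+1)^2+(d(v)-1)^2\bigr)-\bigl(d(u)^2+d(v)^2\bigr)=2\bigl(d(u)-d(v)\bigr)+2\ge 2>0$, contradicting the optimality of $G$. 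Therefore every optimal graph is a threshold graph.

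The only delicate point, and the one I expect to require the most care, is the first step: making the passage between the matrix-shape definition of a threshold graph and the nested-neighborhood condition fully rigorous, together with the verification that the four vertices $u,v,x,y$ are distinct so that the rotation is well defined. Everything after that is elementary: the assumption $d(u)\ge d(v)$ is arranged by symmetry (if instead $d(v)>d(u)$, one deletes $ux$ and inserts $vx$), and the sign of the change in $P_2$ never depends on whether $uv$ or $xy$ happen to be edges, so no separate case analysis on the forbidden induced subgraphs $2K_2$, $P_4$, $C_4$ is needed.
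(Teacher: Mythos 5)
Your argument is correct, and it is worth noting that the paper itself offers no proof of this lemma at all: it is quoted verbatim from Peled, Pedreschi, and Sterbini [PPS], so you are supplying a proof where the authors supply only a citation. Your $2$-switch is exactly the right engine. The bookkeeping checks out: from the failure of nested neighborhoods you correctly extract four distinct vertices $u,v,x,y$ with $ux,vy$ edges and $uy,vx$ non-edges, the rotation $vy\mapsto uy$ keeps the graph simple and in $\G(v,e)$, only $d(u)$ and $d(v)$ change, and $P_2(G')-P_2(G)=2(d(u)-d(v))+2\geq 2$ under the harmless normalization $d(u)\geq d(v)$. The one point you flag yourself---the equivalence between the paper's left-justified, shifted-staircase adjacency matrix and the nested-neighborhood (equivalently, no induced $P_4$, $C_4$, or $2K_2$) characterization---is a standard fact about threshold graphs and does need to be stated, but it is routine: ordering vertices by degree, nesting of neighborhoods forces each row of the upper triangle to be an initial segment of the columns to its right, with strictly decreasing lengths. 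Your approach is also the concrete instantiation of the more general principle the authors mention in passing right after the lemma, namely that any strictly Schur convex function of the degrees is maximized only at threshold graphs, because their degree sequences are the majorization-maximal graphical sequences; your rotation is precisely the elementary transfer that strictly increases $\sum d_i^2$ along the majorization order.
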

Thus we can limit the search for optimal graphs to the threshold graphs.

Actually, a much larger class of functions, including the power
functions,  $d_1^p + \cdots + d_v^p$ for $p \geq 2$, on the degrees
of a graph are maximized only at threshold graphs.  In fact, every
Schur convex function of the degrees is maximized only at the
threshold graphs.    The reason is that the degree sequences of
threshold graphs are maximal with respect to the majorization order
among all graphical sequences.  See \cite{MO} for a discussion of
majorization and Schur convex functions and \cite{MR} for a
discussion of the degree sequences of threshold graphs.

\subsection{The diagonal sequence of a threshold graph}
To state the first main theorem, we must now digress to describe the
diagonal  sequence of a threshold graph in the graph class
$\G(v,e)$.

Returning to the example in Figure \ref{fig:example643}(a)
corresponding to the distinct partition  $\pi = (6,4,3) \in
\ds(8,13)$, we superimpose diagonal lines on the adjacency matrix
$\adj(\pi)$ for the threshold graph $\thr(\pi)$ as shown in Figure
\ref{fig:example643}(b).

\begin{figure}[h]
\begin{center}
\vspace{.2in} \centerline {
\epsfig{file=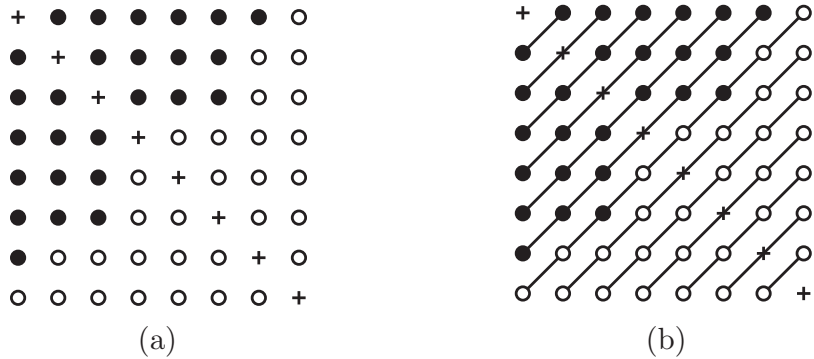,width=4.2in}
 } \vspace{.2in}

\caption{The adjacency matrix, $\adj(\pi)$, for the threshold graph
in $\G(8,13)$ corresponding to the distinct partition $\pi=(6,4,3)
\in \ds(8,13)$ with diagonal sequence
$\delta(\pi)=(1,1,2,2,3,3,1)$.}
\label{fig:example643}
\end{center}
\end{figure}

The number of black dots in the upper triangular part of the
adjacency  matrix on each of the diagonal lines is called the {\it
diagonal sequence} of the partition $\pi$ (or of the threshold graph
$\thr(\pi)$).  The diagonal sequence for $\pi$ is denoted by
$\delta(\pi)$ and for   $\pi=(6,4,3)$ shown in Figure
\ref{fig:example643}, $\delta(\pi)=(1,1,2,2,3,3,1)$.  The value of
$P_2(\thr(\pi))$ is determined by the diagonal sequence of $\pi$.
\begin{lemma} \label{lem:diagonal}
Let $\pi$ be a distinct partition in $\ds(v,e)$ with diagonal
sequence  $\delta(\pi)=(\delta_1, \ldots, \delta_t)$.  Then
$P_2(\thr(\pi))$ is the dot product
\[
P_2(\thr(\pi))=2 \delta(\pi) \cdot (1,2,3, \ldots, t)=2\sum_{i=1}^t i\delta_i.
\]
\end{lemma}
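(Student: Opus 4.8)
The plan is to split the target equality into two stages. Since $P_2(\thr(\pi))=\sum_{j=1}^v d_j^2$, it suffices to show (i) that the diagonal sum can be rewritten as a weighted sum of the degrees, namely $2\sum_{i=1}^t i\,\delta_i=2\sum_{j=1}^v (j-1)d_j$, and (ii) the identity $\sum_{j=1}^v d_j^2=2\sum_{j=1}^v (j-1)d_j$, valid for the degree non-increasing labelling that $\adj(\pi)$ carries. Combining (i) and (ii) gives the claim. Throughout I label rows and columns of $\adj(\pi)$ by $1,\dots,v$, so the degrees satisfy $d_1\ge\cdots\ge d_v$.

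For stage (i) I would reinterpret $\sum_i i\,\delta_i$ as a sum over the black dots. A dot in the strictly upper-triangular part sits in a position $(r,c)$ with $r<c$, and by the way the diagonal lines are drawn it lies on the diagonal numbered $i=r+c-2$ (the top-left entry $(1,2)$ is on diagonal $1$). Because $\delta_i$ counts the dots on diagonal $i$, summing the diagonal index over all dots gives
\[
\sum_{i=1}^{t} i\,\delta_i=\sum_{(r,c)\ \mathrm{a\ dot}}\bigl((r-1)+(c-1)\bigr)=\sum_{j=1}^{v}(j-1)d_j ,
\]
where the last equality is ordinary double counting: each dot $(r,c)$ is the edge $\{r,c\}$, and the integer $j-1$ is contributed once for every edge incident to vertex $j$ (whether $j$ appears as the row index or the column index), i.e.\ $d_j$ times in all. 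This stage uses nothing beyond the definition of the diagonal sequence.

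For stage (ii) I would induct on the number of vertices, using the recursive structure of threshold graphs: every threshold graph on at least two vertices has either an isolated vertex, which in this labelling must be the last, or a dominant vertex, which must be the first, and deleting such a vertex again yields a threshold graph in the same normalized form (see \cite{MR}). The one-vertex graph is the trivial base case. If vertex $v$ is isolated then $d_v=0$, and deleting it changes neither side, so the identity for $\thr(\pi)$ follows at once from the inductive hypothesis applied to the graph on $1,\dots,v-1$.

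The dominant-vertex case is the heart of the argument and the step I expect to be the main obstacle, since it requires careful bookkeeping of how $P_2$, the edge count, and the weighted degree sum all shift when the universal vertex is removed and the indices are renumbered. Deleting the dominant vertex $1$ and relabelling $2,\dots,v$ as $1,\dots,v-1$ produces a threshold graph $G'$ with degrees $d'_w=d_{w+1}-1$ and $e'=e-(v-1)$ edges. Expanding $d_j^2=(d'_{j-1}+1)^2$ and using $\sum_w d'_w=2e'$ gives $P_2(\thr(\pi))=(v-1)^2+P_2(G')+4e'+(v-1)$, while the parallel expansion of $2\sum_{j}(j-1)d_j=2\sum_{w}w(d'_w+1)$, combined with the inductive hypothesis $P_2(G')=2\sum_w (w-1)d'_w$, gives $2\sum_{j}(j-1)d_j=P_2(G')+4e'+v(v-1)$. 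Substituting $e'=e-(v-1)$ shows that both expressions equal $P_2(G')+4e+(v-1)(v-4)$, so they agree and the induction closes. Putting the two stages together yields $P_2(\thr(\pi))=\sum_j d_j^2=2\sum_j (j-1)d_j=2\sum_i i\,\delta_i$, as required.
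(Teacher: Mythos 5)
Your proof is correct, but it takes a genuinely different route from the paper's. The paper argues directly and without induction: it writes each squared degree as a sum of consecutive odd numbers, $d^2=1+3+\cdots+(2d-1)$, so that in the full symmetric adjacency matrix the $k$-th black dot of each row contributes the $k$-th odd number to that row's squared degree; folding the matrix onto its upper triangle then assigns to the dot in position $(i,j)$ the weight $2(i+j-2)$, which is exactly twice its diagonal index, and summing over all dots gives $P_2(\thr(\pi))=2\sum_i i\delta_i$ in one step. You instead factor the statement through the intermediate identity $P_2(\thr(\pi))=2\sum_j (j-1)d_j$: your stage (i) is a pure double count (reindexing the dots by diagonals versus by incident vertices) that is valid for any labelled graph, while stage (ii) is where the threshold structure actually enters, proved by induction on the standard isolated-vertex/dominant-vertex decomposition. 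I checked your bookkeeping in the dominant-vertex case: both sides do reduce to $P_2(G')+4e+(v-1)(v-4)$, so the induction closes. The trade-off is that the paper's argument is shorter and self-contained, while yours cleanly isolates the one place the left-justified (threshold) structure is needed and produces the reusable identity $\sum_j d_j^2=2\sum_j (j-1)d_j$ for degree-sorted threshold graphs as a by-product.
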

For example, if $\pi=(6,4,3)$ as in Figure \ref{fig:example643},
then
\[
P_2(\thr(\pi)) = 2(1,1,2,2,3,3,1) \cdot (1,2,3,4,5,6,7) = 114,
\]
which equals the sum of squares of the degree sequence
$(6,5,5,3,3,3,1)$ of  the graph $\thr(\pi)$.

Theorem 2 in \cite{AK} guarantees that one (or both) of the graphs
$\QS(v,e), \QC(v,e)$ must be optimal in $\G(v,e)$.  But there may be
other optimal graphs in $\G(v,e)$, as the next example shows.

%
%

The quasi-complete graph $\QC(10,30)$, which corresponds to the
distinct partition  $(8,7,5,4,3,2,1)$  is optimal in $\G(10,30)$.
The threshold graph $G_2$, corresponding to the distinct partition
$(9,6,5,4,3,2,1)$ is also optimal in $\G(10,30)$, but  is neither
\qs in $\G(10,30)$ nor \qc in  $\G(v,30)$ for any $v$.  The
adjacency matrices for these two graphs are shown in Figure
\ref{fig:adjequivgraphs}.   They have the same diagonal sequence
$\delta=(1,1,2,2,3,3,4,4,4,2,2,1,1)$ and both are optimal.

\begin{figure}[htbp]
\begin{center}
\epsfig{file=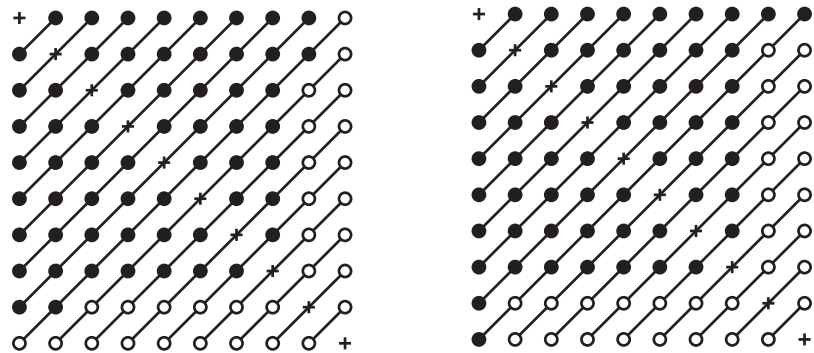,width=4.2in} \caption{Adjacency matrices
for two optimal graphs in $\G(10,30)$,
$\QC(10,30)=\thr(8,7,5,4,3,2,1)$ and $\thr(9,6,5,4,3,2,1)$, having
the same diagonal sequence $\delta=(1,1,2,2,3,3,4,4,4,2,2,1,1)$}
\label{fig:adjequivgraphs}
\end{center}
\end{figure}

We know that either the \qs or the \qc graph in $\G(v,e)$ is
optimal and that any  threshold graph with the same diagonal
sequence as an optimal graph is also optimal.  In fact, the converse
is also true.  Indeed, the relationship between the optimal graphs
and the \qs and \qc graphs in a graph class $\G(v,e)$ is described
in our first main theorem.

\begin{theorem} \label{thm:main1}
Let $G$ be an optimal graph in $\G(v,e)$.  Then $G=\thr(\pi)$ is  a
threshold graph for some partition $\pi \in \ds(v,e)$ and  the
diagonal sequence  $\delta(\pi)$ is equal to the diagonal sequence
of either the \qs graph or  the \qc graph in $\G(v,e)$.
 \end{theorem}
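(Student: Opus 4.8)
The plan is to collapse the whole problem to a single combinatorial optimization over diagonal sequences and then attack that by an exchange (dot-moving) argument. First, by Lemma \ref{lem:OptAreThres} an optimal $G$ is automatically a threshold graph, so I may write $G=\thr(\pi)$ with $\pi=(a_0>a_1>\cdots>a_p)\in\ds(v,e)$. By Lemma \ref{lem:diagonal}, $P_2(\thr(\pi))=2\sum_i i\,\delta_i(\pi)$ depends on $\pi$ only through its diagonal sequence, while $\sum_i\delta_i(\pi)=e$ is fixed. Hence it suffices to prove the purely combinatorial claim that, among all $\pi\in\ds(v,e)$, the weight $W(\pi):=\sum_i i\,\delta_i(\pi)$ is strictly below its maximum unless $\delta(\pi)$ equals $\delta(\QS(v,e))$ or $\delta(\QC(v,e))$. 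Since Theorem 2 of \cite{AK} guarantees that one of $\QS(v,e),\QC(v,e)$ already realizes the maximum of $P_2$, this claim finishes the theorem: an optimal $\thr(\pi)$ has $W(\pi)$ maximal, so $\delta(\pi)$ must be one of the two distinguished sequences.

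Next I would set up the geometry. Identify the upper-triangular dots of $\adj(\pi)$ with a set $D\subseteq\{(s,c):0\le s<c\le v-1\}$; the threshold (left-justified, distinct-part) condition is exactly that $D$ is a \emph{staircase}, i.e. closed under decreasing either coordinate. In this picture the diagonal index of a dot $(s,c)$ is $s+c$, so $W(\pi)=\sum_{(s,c)\in D}(s+c)$. The elementary move deletes a removable corner (a maximal dot) and inserts an addable corner (a minimal non-dot whose insertion keeps $D$ a staircase); this preserves $|D|=e$ and keeps us in $\ds(v,e)$, and it changes $W$ by the difference of the two anti-diagonal indices. A weight-maximal graph therefore admits no move that strictly raises $W$, which is the condition I will exploit.

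I would then pin down the two stable shapes. The graph $\QS(v,e)$ is the \emph{flush} staircase, with rows taken as long as possible ($a_s=v-1-s$ until the edge budget is exhausted), and $\QC(v,e)$ is the \emph{compact} staircase built on the largest clique $K_k$. For each of these I would check directly that no admissible move increases $W$: every removable corner already lies on an anti-diagonal at least as high as every addable corner that a distinctness-preserving move can actually reach (for $\QC$ some higher addable corners exist but are unreachable by a single legal swap, precisely because of the distinct-parts constraint). A useful structural input here is that the full anti-diagonals (those with $\delta_i$ equal to the maximal occupancy $M_i$ of that anti-diagonal) always form an initial segment $1,\dots,L$, after which the sequence tapers.

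The hard part will be the converse: showing that any $\pi$ whose diagonal sequence is neither distinguished sequence admits a strictly improving admissible move. The plan is to use the ``full prefix then taper'' structure to locate, inside the tapering region, a removable corner on a low anti-diagonal together with an addable corner on a strictly higher one. The main obstacle is that the distinctness requirement can block the naive single swap (as it already does for $\QC$ itself), so the argument must either show that being blocked for \emph{every} beneficial single swap already forces $\delta(\pi)$ into the $\QS$ or $\QC$ profile, or replace a blocked swap by a short two-corner move that preserves distinct parts. Carrying out this case analysis—and in particular proving that no third beneficial-move-free profile exists—is the technical heart of the proof.
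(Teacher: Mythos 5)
Your setup is sound and matches the paper's foundations: Lemma \ref{lem:OptAreThres} reduces to threshold graphs, Lemma \ref{lem:diagonal} reduces $P_2$ to the anti-diagonal weight $\sum_i i\,\delta_i$, and Theorem 2 of \cite{AK} supplies the fact that $\QS$ or $\QC$ attains the maximum. But the proof stops exactly where the work begins. The entire content of the theorem is the converse you defer to ``the technical heart'': that every $\pi\in\ds(v,e)$ whose diagonal sequence is neither $\delta(\QS(v,e))$ nor $\delta(\QC(v,e))$ admits a strictly improving exchange. You announce this classification but do not carry it out, and you yourself flag that the distinctness constraint can block every beneficial single swap without the profile being one of the two distinguished ones. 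That is not a removable technicality: the paper's own exchange arguments (in the proof of Lemma \ref{lem:vminoneqc}) are forced to use \emph{block} exchanges that relocate $\Theta(c)$ dots at once --- e.g.\ replacing $(v-1,c,c-1,\ldots,3,2)$ by $(v-1,2c-1,c-2,\ldots,3,2)$ --- precisely because single removable-corner/addable-corner moves are insufficient under the distinct-parts constraint. So the single-swap local-optimality framework you propose would, as stated, fail to rule out spurious locally stable shapes, and the ``full prefix then taper'' structural claim is asserted without proof and does not obviously close this gap.

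For comparison, the paper avoids classifying exchange-stable configurations globally. It proves Theorem \ref{thm:main1} as a corollary of Theorem \ref{thm:main2}, which is an induction on $v$: after complementing if necessary, an optimal $\pi$ contains the part $v-1$, so $\pi=(v-1:\mu)$ with $\mu$ optimal in $\ds(v-1,e-(v-1))$; the induction hypothesis restricts $\mu$ to six forms, and the central Lemma \ref{lem:vminoneqc} (proved by the multi-dot exchanges above, split into the cases $j=1$, $j=c$, $1<j<c$ and the subcases $2c\lessgtr v-1$) forces $(v-1:\mu)$ back into the six target forms. If you want to pursue your direct route, you would need to either (i) enlarge your move set to include these block exchanges and then classify all configurations stable under the enlarged set, or (ii) adopt the paper's peel-off-the-largest-part induction, which is what actually tames the case analysis.
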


Theorem \ref{thm:main1}  is stronger than Lemma 8 \cite{AK} because
it characterizes {\it all} optimal graphs in $\G(v,e)$.  In Section
\ref{sec:optimalgraphs} we describe all optimal graphs in detail.

\subsection{Optimal graphs} \label{sec:optimalgraphs}
Every optimal graph in $\G(v,e)$ is a threshold graph, $\thr(\pi)$,
corresponding to a partition $\pi$ in $\ds(v,e)$.  So we extend the
terminology and  say that the partition $\pi$ is {\it optimal} in
$\ds(v,e)$, if its threshold graph $\thr(\pi)$ is optimal in
$\G(v,e)$. We say that the partition $\pi \in \ds(v,e)$ is the {\it
\qs partition}, if  $\thr(\pi)$ is the \qs graph in $\G(v,e)$.
Similarly, $\pi \in \ds(v,e)$ is the {\it \qc partition}, if
$\thr(\pi)$ is the \qc graph in $\G(v,e)$.

We now describe the \qs and \qc partitions in $\ds(v,e)$.

First, the \qc graphs.  Let $v$ be a positive integer and $e$ an
integer such that $0 \leq e \leq \binom{v}{2}$.  There exists unique
integers $k$ and $j$ such that
\[
e=\binom{k+1}{2}-j \mbox{ and } 1 \leq j \leq k.
\]
The partition
\[
\pi(v,e,\qcc): = (k,k-1, \ldots , j+1,j-1, \ldots , 1)=(k,k-1, \ldots, \widehat{j}, \ldots,2,1)
\]
corresponds to the \qc threshold graph $\QC(v,e)$ in $\ds(v,e)$.
The symbol $\widehat{j}$ means that $j$ is missing.

To describe the \qs partition $\pi(v,e,\qss)$ in $\ds(v,e)$, let
$k^\prime, j^\prime$ be the unique integers such that
\[
e=\binom{v}{2}-\binom{k^\prime+1}{2}+j^\prime \mbox{ and } 1 \leq
j^\prime \leq k^\prime.
\]
Then the partition
\[
\pi(v,e,\qss)=(v-1, v-2, \ldots,  k^\prime+1, j^\prime)
\]
corresponds to the \qs graph $\QS(v,e)$ in $\ds(v,e)$.

In general, there may be many partitions with the same diagonal
sequence as $\pi(v,e,\qcc)$ or $\pi(v,e,\qss)$.  For example, if
$(v,e)=(14,28)$, then $\pi(14,28,\qcc)=(7,6,5,4,3,2,1)$ and all of
the partitions in Figure \ref{fig:examplev14e28} have the same
diagonal sequence, $\delta=(1,1,2,2,3,3,4,3,3,2,2,1,1)$.
\begin{figure}[htbp]
\epsfig{file=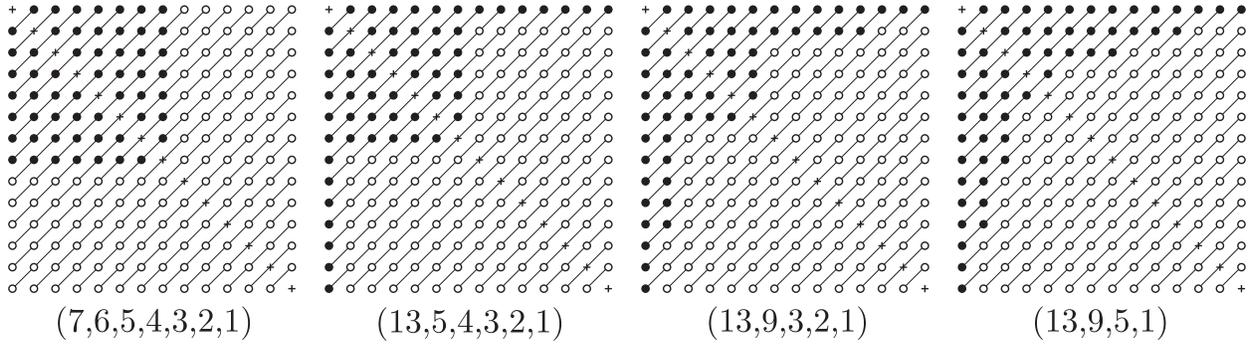,width=6.5in}
\caption{Four partitions with the same diagonal sequence as
$\pi(14,28,\qcc)$} \label{fig:examplev14e28}
\end{figure}
But none of the threshold graphs corresponding to the partitions  in
Figure \ref{fig:examplev14e28} is optimal.  Indeed, if the \qc graph
is optimal in $\ds(v,e)$, then there are  at most three partitions
in $\ds(v,e)$ with the same diagonal sequence as the \qc graph.  The
same is true for the \qs partition.  If the \qs partition is optimal
in $\ds(v,e)$,  then there are at most three partition in $\ds(v,e)$
having the same diagonal sequence as the \qs partition.  As a
consequence, there are at most six optimal partitions in $\ds(v,e)$
and so at most six optimal graphs in $\G(v,e)$.  Our second main
result, Theorem \ref{thm:main2}, entails Theorem \ref{thm:main1}; it
describes the optimal partitions in $\G(v,e)$  in detail. The six
partitions described in Theorem \ref{thm:main2} correspond to the
six graphs determined by Byer in \cite{By}. However, we give precise
conditions to determine when each of these partitions is optimal.

\begin{theorem} \label{thm:main2}
Let  $v$ be a positive integer and $e$ an integer such that $0 \leq
e \leq \tbinom{v}{2}$.  Let $k,k^\prime,j,j^\prime$ be the unique
integers satisfying
\[
e = \binom{k+1}{2}-j, \text{ with } 1 \leq j \leq k,
\]
and
\[
e = \binom{v}{2}-\binom{k^\prime+1}{2}+j^\prime, \text{ with } 1 \leq j^\prime \leq k^\prime.
\]
Then every optimal partition $\pi$ in $\ds(v,e)$ is one of the
following six partitions:
\begin{description}
\item[1.1] $\pi_{1.1}=(v-1,v-2,\ldots ,k^\prime+1,j^\prime)$, the \qs partition for $e$,
\item[1.2] $\pi_{1.2}=(v-1,v-2,\ldots ,\widehat{2k^\prime-j^\prime-1},\ldots
,k^\prime-1)$, if $k^\prime+1\leq 2k^\prime-j^\prime-1 \leq v-1$,
\item[1.3] $\pi_{1.3}=(v-1,v-2,\ldots ,k^\prime+1,2,1)$, if $j^\prime=3$ and $v\geq4$,
\item [2.1] $\pi_{2.1}=(k,k-1, \ldots, \widehat{j} , \ldots,2,1)$, the \qc partition for $e$,
\item [2.2] $\pi_{2.2}=(2k-j-1,k-2,k-3, \ldots 2,1)$, if $k+1 \leq 2k-j-1 \leq v-1$,
\item [2.3] $\pi_{2.3}=(k,k-1, \ldots, 3)$, if $j=3$ and $v\geq4$.
\end{description}

On the other hand, partitions $\pi_{1.1}$ and $\pi_{2.1}$ always
exist  and at least one of them is optimal. Furthermore, $\pi_{1.2}$ and $\pi_{1.3}$ (if they exist) have the same diagonal
sequence as $\pi_{1.1}$, and if $S(v,e)\geq C(v,e)$, then they are all optimal.
Similarly, $\pi_{2.2}$ and $\pi_{2.3}$ (if they exist)
have the same diagonal sequence as $\pi_{2.1}$, and if $S(v,e)\leq C(v,e)$, then
they are all optimal.
\end{theorem}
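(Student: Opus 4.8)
The plan is to deduce Theorem~\ref{thm:main2} from Theorem~\ref{thm:main1} together with an explicit enumeration of the distinct partitions sharing a diagonal sequence with the \qc or \qs partition. The organizing principle is Lemma~\ref{lem:diagonal}: because $P_2(\thr(\pi))$ depends only on $\delta(\pi)$, every partition with the diagonal sequence of $\pi_{2.1}$ has $P_2=C(v,e)$ and every partition with the diagonal sequence of $\pi_{1.1}$ has $P_2=S(v,e)$. Combined with Theorem~2 of \cite{AK}, which gives $\max(v,e)=\max\{C(v,e),S(v,e)\}$, this proves at once that $\pi_{1.1}$ and $\pi_{2.1}$ exist and that at least one of them is optimal. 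It also splits the remaining work into two tasks: first, verifying that $\pi_{2.2},\pi_{2.3}$ (when they exist) have the same diagonal sequence as $\pi_{2.1}$, and symmetrically on the \qs side---this together with Lemma~\ref{lem:diagonal} gives their optimality whenever $C(v,e)\ge S(v,e)$ (resp.\ $S(v,e)\ge C(v,e)$); and second, showing that no partition outside the stated list is optimal, which by Theorem~\ref{thm:main1} reduces to enumerating the partitions in $\ds(v,e)$ that carry the diagonal sequence of $\pi_{2.1}$ or of $\pi_{1.1}$.

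For the enumeration I would encode a distinct partition $\pi=(a_0,\dots,a_q)$ by the intervals $I_i=[\,2i+1,\,2i+a_i\,]$ it occupies on the anti-diagonals, so that $\delta(\pi)_r=\#\{i:r\in I_i\}$ is their coverage function. The useful observation is that the increment $\delta_r-\delta_{r-1}$ equals $\#\{i:2i+1=r\}-\#\{i:2i+a_i=r-1\}$ and hence depends only on the multiset of left endpoints, forced to be $\{1,3,\dots,2q+1\}$, and on the multiset of right endpoints $b_i=2i+a_i$. Thus, once the number of rows $q+1$ is fixed, the diagonal sequence determines the multiset $\{b_i\}$, and the partitions realizing it are exactly the assignments of these endpoints to rows for which $a_i=b_i-2i$ is positive and strictly decreasing (equivalently $b_i\ge 2i+1$ and $b_{i+1}\le b_i+1$) and satisfies $a_0<v$. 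A computation from Equation~(\ref{eqn:Cve}) shows that, at the maximal number of rows, the endpoint multiset of $\pi_{2.1}$ is $\{k,k+1,\dots,2k-3\}$ with the value $2k-j-1$ repeated; placing the repeat on its interior row returns $\pi_{2.1}$, whereas placing it on the first row produces $\pi_{2.2}=(2k-j-1,k-2,\dots,1)$, which has the same endpoints and the same number of rows and so, by the increment identity, the same diagonal sequence. The partition $\pi_{2.3}=(k,k-1,\dots,3)$ instead comes from using one fewer row when $j=3$, the one value for which the resulting endpoint multiset is gap-free and admits this short staircase assignment; that it shares the diagonal sequence is then a brief coverage check. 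This settles the first task.

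The heart of the proof, and the step I expect to be the main obstacle, is the second task under the hypothesis $C(v,e)\ge S(v,e)$: that the only partitions of $\ds(v,e)$ in the diagonal class of $\pi_{2.1}$ are $\pi_{2.1}$, $\pi_{2.2}$, and $\pi_{2.3}$. As the instance $(v,e)=(14,28)$ in Figure~\ref{fig:examplev14e28} shows, this class may contain further partitions; they arise from using still fewer rows, where the endpoint multiset develops gaps that force the largest endpoint $2k-3$ onto the first row, so that $a_0=2k-3$. I would prove that $C(v,e)\ge S(v,e)$, evaluated through Equations~(\ref{eqn:Cve}) and (\ref{eqn:SCconnection}), forces $2k-3\ge v$, so that every such surplus partition has a part $\ge v$ and therefore leaves $\ds(v,e)$, while the defining inequalities for $\pi_{2.2}$ and $\pi_{2.3}$ are precisely the conditions that keep their largest parts below $v$. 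The delicate points are converting $C(v,e)\ge S(v,e)$ into this explicit bound on $v$, and the small cases $j=2,3$: when $j=2$ a surplus partition shares its first part $2k-3=2k-j-1$ with $\pi_{2.2}$, and one must check that $C(v,e)\ge S(v,e)$ then removes both; when $j=3$ one must confirm that the short partition $\pi_{2.3}$ is the unique surplus member with a first part below $2k-3$, hence the only extra survivor.

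It remains to transfer everything to the \qs family, which I would do by complementation. Since $\QS(v,e)$ is the complement of $\QC(v,e')$ with $e'=\binom{v}{2}-e$, and complementing within the upper triangle replaces the coverage function $\delta$ by $T-\delta$, where $T_r$ counts the off-diagonal cells on anti-diagonal $r$, the map $\pi\mapsto\bar\pi$ is a bijection between the partitions of $\ds(v,e)$ with the diagonal sequence of $\pi_{1.1}$ and those of $\ds(v,e')$ with the diagonal sequence of $\pi(v,e',\qcc)$. Applying the \qc enumeration at $e'$ and complementing yields $\pi_{1.1},\pi_{1.2},\pi_{1.3}$ together with their existence conditions, and the point-symmetry of $S-C$ recorded after Equation~(\ref{eqn:SCconnection}) gives $C(v,e)\ge S(v,e)$ if and only if $S(v,e')\ge C(v,e')$, so the optimality conditions match. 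Assembling the two families and using Theorem~\ref{thm:main1} once more---every optimal partition lies in the diagonal class of $\pi_{1.1}$ or of $\pi_{2.1}$, and the sign of $C(v,e)-S(v,e)$ selects the class that is actually maximal---shows that every optimal partition is one of the six, which completes the proof.
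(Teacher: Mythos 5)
Your proposal inverts the paper's logical order and in doing so becomes circular. You propose to deduce Theorem \ref{thm:main2} from Theorem \ref{thm:main1}, but in this paper Theorem \ref{thm:main1} is not an independent prior result: it is obtained as an immediate consequence of Theorem \ref{thm:main2} (together with Lemmas \ref{lem:OptAreThres} and \ref{lem:diagonal}). Nothing available beforehand --- not Theorem 2 of \cite{AK} (which only says $\max(v,e)=\max\{S(v,e),C(v,e)\}$), not Lemma \ref{lem:OptAreThres} (optimal graphs are threshold), not Lemma \ref{lem:diagonal} --- tells you that an optimal partition must carry the diagonal sequence of the quasi-star or quasi-complete partition. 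That is precisely the hard content to be established. The paper gets it by induction on $v$: writing an optimal $\pi$ (or its complement) as $(v-1:\mu)$ with $\mu$ optimal in $\ds(v-1,e-(v-1))$, applying the induction hypothesis to $\mu$, and using Lemma \ref{lem:vminoneqc} --- whose proof is a sequence of explicit exchange arguments producing a partition $\pi'$ with $P_2(\pi')>P_2(\pi)$ whenever $\pi$ is not of the required form --- to pin down which of the six shapes $(v-1:\mu)$ can be. Your outline contains no substitute for this step, so the assertion that every optimal partition lies in the diagonal class of $\pi_{1.1}$ or of $\pi_{2.1}$ is assumed, not proved.

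Even granting Theorem \ref{thm:main1}, the second half of your plan --- the enumeration showing that when $C(v,e)\ge S(v,e)$ the diagonal class of $\pi_{2.1}$ inside $\ds(v,e)$ is exactly $\{\pi_{2.1},\pi_{2.2},\pi_{2.3}\}$ --- is only asserted. The two claims it rests on (that every ``surplus'' member of the class has first part $2k-3$, and that $C(v,e)\ge S(v,e)$ forces $2k-3\ge v$ so that such partitions leave $\ds(v,e)$) are plausible, and the second is asymptotically consistent with Lemma \ref{lem:kzerobound} and Corollary \ref{cor:strictuniform}, but neither is established here, and the low-$v$ and $j\in\{2,3\}$ cases you flag yourself are exactly where the exceptional coincidences of this problem live (cf.\ Figure \ref{fig:hexagon} and the pairs $(7,9)$, $(7,12)$, $(9,18)$). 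The parts of your proposal that do go through --- that $\pi_{2.2}$ and $\pi_{2.3}$ share the diagonal sequence of $\pi_{2.1}$, and dually for the quasi-star family via complementation, hence are optimal exactly when $\pi_{2.1}$ is --- reproduce the paper's own diagonal-matching computations, but they are the easy direction. As it stands the proposal has a genuine gap both at its foundation and at its central enumeration step.
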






%

A few words of explanation  are in order regarding the notation for
the optimal partitions in Theorem \ref{thm:main2}. If $k^\prime=v$,
then $j^\prime=v,e=0$, and $\pi_{1.1}=\emptyset$. If $k^\prime=v-1$,
then $e=j^\prime \leq v-1$, and $\pi_{1.1}=(j^\prime)$; further, if
$j^\prime=3$, then $\pi_{1.3}=(2,1)$. In all other cases $k^\prime
\leq v-2$ and then $\pi_{1.1}$, $\pi_{1.2}$, and $\pi_{1.3}$ are
properly defined.

If $j^\prime = k^\prime$ or $j^\prime= k^\prime -1$, then both
partitions in 1.1 and 1.2 would be equal to $(v-1,v-2, \ldots,
k^\prime)$ and $(v-1,v-2, \ldots, k^\prime+1, k^\prime-1)$
respectively. So the condition $k^\prime+1 \leq 2
k^\prime-j^\prime-1$ merely ensures that $\pi_{1.1}\neq \pi_{1.2}$. A similar remark holds for the partitions in 2.1 and 2.2. By
definition the partitions $\pi_{1.1}$ and $\pi_{1.3}$ are always
distinct; the same holds for partitions $\pi_{2.1}$ and $\pi_{2.3}$.
In general the partitions $\pi_{i.j}$ described in items 1.1-1.3 and
2.1-2.3 (and their corresponding threshold graphs) are all
different. All the exceptions are illustrated in Figure
\ref{fig:hexagon} and are as follows: For any $v$, if $e\in
\{0,1,2\}$ or $e^{\prime }\in \{0,1,2\}$ then $\pi_{1.1}=\pi_{2.1}$.
For any $v\geq4$, if $e=3$ or $ e^\prime=3$, then
$\pi_{1.3}=\pi_{2.1}$ and $\pi_{1.1}=\pi_{2.3}$. If $(v,e)=(5,5)$
then $\pi_{1.1}=\pi_{2.2}$ and $\pi_{1.2}=\pi_{2.1}$. Finally, if
$(v,e)=(6,7)$ or $(7,12)$, then $\pi_{1.2}=\pi_{2.3}$. Similarly, if
$(v,e)=(6,8)$ or $(7,9)$, then $\pi_{1.3}=\pi_{2.2}$. For $v\geq8$ and $4\leq e \leq \tbinom{v}{2}-4$, all the partitions $\pi_{i.j}$ are pairwise distinct (when they exist).

\begin{figure}[h]
\begin{center}
\epsfig{file=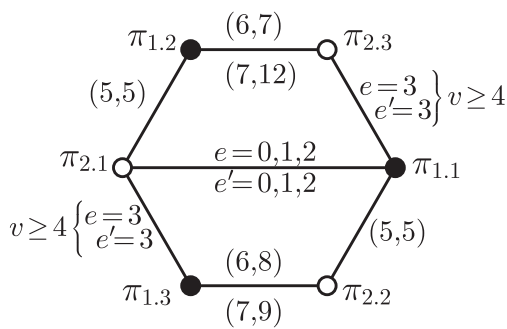,width=2.6in} \caption{Instances of pairs
$(v,e)$ where two partitions $\pi_{i.j}$ coincide}
\label{fig:hexagon}
\end{center}
\end{figure}

In the next section, we determine the pairs $(v,e)$
having a prescribed number of optimal partitions (and hence graphs)
in $\G(v,e)$.

\subsection{Pairs $(v,e)$ with a prescribed number of optimal partitions.}

In principle, a given pair $(v,e)$, could have between one and six
optimal partitions. It is easy to see that there are infinitely many
pairs $(v,e)$ with only one optimal partition (either the quasi-star
or the quasi-complete). For example the pair $(v, \tbinom{v}{2})$
only has the quasi-complete partition. Similarly, there are
infinitely many pairs with exactly two optimal partitions and this
can be achieved in many different ways. For instance if
$(v,e)=(v,2v-5)$ and $v\geq 9$, then $k^\prime=v-2$,
$j^\prime=v-4>3$, and $S(v,e)>C(v,e)$ (c.f. Corollary
\ref{cor:strictuniform}). Thus only the partitions $\pi_{1.1}$ and
$\pi_{1.2}$ are optimal. The interesting question is the existence
of pairs with 3,4,5, or 6 optimal partitions.

Often, both partitions $\pi_{1.2}$ and $\pi_{1.3}$ in Theorem
\ref{thm:main2} exist for the same pair $(v,e)$; however it turns
out that this almost never happens when they are optimal partitions.
More precisely,

\begin{theorem} \label{thm:family5_6}
If $\pi_{1.2}$ and $\pi_{1.3}$ are optimal partitions then
$(v,e)=(7,9)$ or $(9,18)$. Similarly, if $\pi_{2.2}$ and $\pi_{2.3}$
are optimal partitions then $(v,e)=(7,12)$ or $(9,18)$. Furthermore,
the pair $(9,18)$ is the only one with six optimal partitions, there
are no pairs with five. If there are more than two optimal
partitions for a pair $(v,e)$, then $S(v,e)=C(v,e)$, that is, both
the quasi-complete and the quasi-star partitions must be optimal.
\end{theorem}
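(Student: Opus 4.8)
The plan is to prove the first assertion in full and deduce the rest. First I would reduce the hypothesis to a clean arithmetic condition. By Theorem \ref{thm:main2}, $\pi_{1.3}$ exists iff $j'=3$, $\pi_{1.2}$ exists iff $k'\ge 5$ and $v\ge 2k'-3$, and both carry the same diagonal sequence as the quasi-star partition $\pi_{1.1}=(v-1,\dots,k'+1,j')$; hence by Lemma \ref{lem:diagonal} each has $P_2$-value $S(v,e)$, so both are optimal iff they exist and $S(v,e)\ge C(v,e)$. Thus I must find all $(v,e)$ with
\[
j'=3,\qquad k'\ge 5,\qquad v\ge 2k'-3,\qquad S(v,e)\ge C(v,e).
\]
Fixing $j'=3$ gives $e'=\binom v2-e=\binom{k'+1}2-3$, so the quasi-complete graph for $e'$ has parameters $k=k'$, $j=3$, and (\ref{eqn:Cve}) yields the $v$-independent value $C(v,e')=k'^3+k'^2-12k'+12$.

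The engine is a monotonicity observation. By the point-symmetry following (\ref{eqn:SCconnection}) I would write $S(v,e)-C(v,e)=C(v,e')-S(v,e')$. Along the family with $k'$ (hence $e'$) fixed and $v$ increasing, $C(v,e')$ is constant while $S(v,e')$ strictly increases as long as the quasi-star graph still has a dominant vertex: since $\G(v,e')\hookrightarrow\G(v+1,e')$ by adjoining an isolated vertex, the majorization-largest degree sequence — realized by the quasi-star graph — can only grow, and $P_2$ is strictly Schur-convex (cf. the discussion after Lemma \ref{lem:OptAreThres}). Hence $S(v,e)-C(v,e)$ is strictly decreasing in $v$, so for each $k'$ the admissible $v$ form an initial interval $2k'-3\le v\le v^*(k')$, and it suffices to test the least value $v=2k'-3$.

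At the slice $v=2k'-3$ the complementary count is $e=\tfrac32(k'-2)(k'-3)$. For $5\le k'\le 8$ its quasi-complete graph has the explicit parameters $K=2k'-6$ and $J=\binom{k'-3}2$, so $C(2k'-3,e)$ comes from (\ref{eqn:Cve}), while $S(2k'-3,e)=P_2(\thr(\pi_{1.1}))$ is read off $\pi_{1.1}=(2k'-4,\dots,k'+1,3)$ via Lemma \ref{lem:diagonal}; subtracting gives $g(k'):=S-C$ with $g(5)=g(6)=0$ and $g(7),g(8)<0$. For $k'\ge 9$ one checks $e>m+\tfrac v2$, so Theorem 3 of \cite{AK} gives $C\ge S$, and the confinement of the locus $S=C$ to $|e-m|\le v/2$ (extracted from Lemma 8 of \cite{AK}) upgrades this to $C>S$; thus $g(k')<0$ for every $k'\ge 7$. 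For $k'\ge 7$ the interval is therefore empty, and for $k'\in\{5,6\}$ it collapses to the single point $v=2k'-3$ by strict monotonicity, leaving exactly $(v,e)=(7,9)$ and $(9,18)$, both with $S=C$. The statement for $\pi_{2.2},\pi_{2.3}$ follows by complementation: these are optimal for $(v,e)$ iff $\pi_{1.2},\pi_{1.3}$ are optimal for $(v,e')$ — complementation carries $\QC(v,\cdot)$ to $\QS(v,\cdot)$ and preserves optimality by (\ref{eqn:SCconnection}) — forcing $(v,e')\in\{(7,9),(9,18)\}$, i.e. $(v,e)\in\{(7,12),(9,18)\}$.

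For the remaining assertions I would argue structurally. Six optima require both families full, so $(v,e)\in\{(7,9),(9,18)\}\cap\{(7,12),(9,18)\}=\{(9,18)\}$, where $e=m$ (so $S=C$) and the six partitions exist and are pairwise distinct. For ``more than two implies $S=C$'' I use the contrapositive: if $S>C$ strictly then each $\pi_{2.i}$ has value $C<\max$ and is not optimal, so every optimum lies in $\{\pi_{1.1},\pi_{1.2},\pi_{1.3}\}$; more than two then forces $\pi_{1.2},\pi_{1.3}$ optimal, whence $(v,e)\in\{(7,9),(9,18)\}$, contradicting $S>C$; the case $C>S$ is symmetric. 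Finally there is no pair with exactly five, since $5=3+2$ again demands one full family, hence $(v,e)\in\{(7,9),(7,12),(9,18)\}$, and direct inspection — using $\pi_{1.3}=\pi_{2.2}$ at $(7,9)$, $\pi_{1.2}=\pi_{2.3}$ at $(7,12)$, and the non-existence of the remaining family-$2$ resp. family-$1$ partition there — shows these carry $4$, $4$, and $6$ optima.

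I expect the real work to be concentrated in the slice of the third step: although the monotonicity reduces everything to $v=2k'-3$, evaluating $S-C$ there in closed form hinges on the quasi-complete parameters $K,J$, which are only piecewise-polynomial in $k'$; confirming the clean forms on $5\le k'\le 8$ and, above all, securing the strict inequality $C>S$ for $k'\ge 9$ (equivalently, that $S=C$ cannot occur once $|e-m|>v/2$) is the main obstacle.
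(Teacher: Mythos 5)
Your reduction of the hypothesis to ``$j'=3$, $k'\ge 5$, $v\ge 2k'-3$, $S(v,e)\ge C(v,e)$'' and your use of complementation for the $\pi_{2.2},\pi_{2.3}$ case, as well as the structural deductions at the end (six optima force $(9,18)$; more than two optima force both families represented, hence $S=C$; the five-optima case dies because of the coincidences $\pi_{1.3}=\pi_{2.2}$ at $(7,9)$ and $\pi_{1.2}=\pi_{2.3}$ at $(7,12)$), all match the paper. Where you diverge is the engine that turns the arithmetic condition into a finite list: the paper simply feeds $S(v,e)\ge C(v,e)$ into Corollary \ref{cor:strictuniform} to get $e\le m+v/2$, combines this with $v\ge 2k'-3$ and $e=\binom{v}{2}-\binom{k'+1}{2}+3$ to obtain $3+\frac12\binom{v}{2}\le\binom{(v+3)/2+1}{2}+\frac v2$, hence $7\le v\le 13$, and then checks the finitely many pairs. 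You instead fix $k'$ and try to propagate a single check at $v=2k'-3$ upward via monotonicity in $v$.

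That monotonicity step is a genuine gap. You claim $S(v,e')$ is \emph{strictly} increasing in $v$ for fixed $e'$ (as long as a dominant vertex persists), justified by saying the quasi-star degree sequence is ``the majorization-largest'' and can only grow under $\G(v,e')\hookrightarrow\G(v+1,e')$. There is no majorization-largest graphical sequence for given $(v,e)$: the threshold sequences are the \emph{maximal} elements and form an antichain (the quasi-star sequence does not majorize the quasi-complete one in general), so the embedding argument only shows $\max(v+1,e')\ge\max(v,e')$, not $S(v+1,e')\ge S(v,e')$. Worse, strictness is simply false: $S(6,12)=S(7,12)=102$ (quasi-stars with degree sequences $(5,5,5,3,3,3)$ and $(6,6,3,3,2,2,2)$, both having dominant vertices); likewise $S(4,5)=S(5,5)=26$. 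Without strictness your ``collapse to the single point $v=2k'-3$'' for $k'\in\{5,6\}$ fails exactly where it matters --- at $k'=5$ you have $g(5)=0$, so equality at $v=7$, and you cannot exclude $v=8$ (i.e.\ $(v,e)=(8,16)$, where in fact $S-C=-4$ but this must be computed, not inferred); similarly $(10,27)$ for $k'=6$, and $(8,16)$, $(10,27)$ are also not covered by Corollary \ref{cor:strictuniform} since there $|e-m|<v/2$. To repair your route you would need either an honest proof of (at least non-strict) monotonicity plus direct verification at the handful of points not killed by Corollary \ref{cor:strictuniform}, or you could drop the monotonicity idea entirely and bound $v$ first, as the paper does.
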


In the next two results, we describe two infinite families of
partitions in $\ds(v,e)$, and hence graph classes $\G(v,e)$, for
which there are exactly three (four) optimal partitions. The fact
that they are infinite is proved in Section \ref{sec:Pell}.
\begin{theorem} \label{thm:family3}
Let $v>5$ and $k$ be positive integers that satisfy the Pell's Equation
\begin{equation} \label{eqn:family3}
(2v-3)^2-2(2k-1)^2=-1
\end{equation}
and let $e=\binom{k}{2}$.  Then (using the notation of Theorem
\ref{thm:main2}), $j=k$,  $k^\prime=k+1$, $j^\prime=2k-v+2$, and
there are exactly three optimal partitions in $\ds(v,e)$, namely
\begin{eqnarray*}
\pi_{1.1}&=& (v-1,v-2, \ldots , k+2, 2k-v+2)\\
\pi_{1.2}&=&(v-2,v-3, \ldots, k)\\
\pi_{2.1} &=& (k-1,k-2,\ldots,2,1).
\end{eqnarray*}
The partitions $\pi_{1.3},\pi_{2.2}$, and $\pi_{2.3}$ do not exist.
\end{theorem}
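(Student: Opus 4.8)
The plan is to first pin down every parameter from the Pell relation \eqref{eqn:family3}, and then to use Theorem \ref{thm:main2} to reduce the entire statement to a single numerical identity, namely $S(v,e)=C(v,e)$. Setting $X=2v-3$, $Y=2k-1$ and expanding $X^2-2Y^2=-1$ gives, after clearing denominators, the equivalent identity $(v-1)(v-2)=2k(k-1)$, i.e.\ $v^2-3v+2=2k^2-2k$, which I will use repeatedly. Since $e=\binom{k}{2}=\binom{k+1}{2}-k$, I read off that $j=k$ in the quasi-complete decomposition, so by \eqref{eqn:Cve} $C(v,e)=k(k-1)^2$ and $\pi(v,e,\qcc)=\pi_{2.1}=(k-1,\dots,2,1)$. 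On the quasi-star side I would substitute the identity into $\binom{v}{2}=k^2-k+v-1$ and solve $e=\binom{v}{2}-\binom{k'+1}{2}+j'$, obtaining $k'=k+1$ and $j'=2k-v+2$. A short estimate then bounds $v$: if $v\le k+2$ or $v\ge 2k+1$ one checks $(v-1)(v-2)\ne 2k(k-1)$ unless $k\le 3$ (the excluded solutions $(v,k)=(2,1),(5,3)$), so for $v>5$ one has $k+3\le v\le 2k$, hence $2\le j'\le k-1$ and in particular $1\le j'\le k'$.

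Next I would run through the six candidate partitions of Theorem \ref{thm:main2} and show that exactly three exist. The partitions $\pi_{1.1}$ and $\pi_{2.1}$ always exist, and with the values above $\pi_{1.1}=(v-1,\dots,k+2,2k-v+2)$ as claimed. For $\pi_{1.2}$ I compute $2k'-j'-1=v-1$, so the existence condition $k'+1\le 2k'-j'-1\le v-1$ reduces to $k+2\le v-1$, which holds since $v\ge k+3$; omitting the part $v-1$ from $(v-1,\dots,k)$ then yields exactly $\pi_{1.2}=(v-2,\dots,k)$. The remaining three fail to exist: $\pi_{2.2}$ would need $k+1\le 2k-j-1=k-1$, which is impossible; $\pi_{2.3}$ needs $j=k=3$, and $\pi_{1.3}$ needs $j'=3$, i.e.\ $v=2k-1$; but substituting $v=2k-1$ (resp.\ $k=3$) into the Pell identity forces $k\in\{1,3\}$ and hence $v\le 5$, contradicting $v>5$.

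The heart of the argument, and the step I expect to be most delicate, is to prove $S(v,e)=C(v,e)$, since this is exactly what makes all three surviving partitions simultaneously optimal. I would compute $S$ through the complement formula \eqref{eqn:SCconnection}: by the Pell identity $4e-v(v-1)=2k(k-1)-v(v-1)=-2(v-1)$, so $S(v,e)=C(v,e')-2(v-1)^2$, where $e'=\binom{v}{2}-e=\binom{k}{2}+(v-1)$. The bounds $k+3\le v\le 2k$ place $e'$ strictly between $\binom{k+1}{2}$ and $\binom{k+2}{2}$, so the quasi-complete decomposition of $e'$ has parameters $k''=k+1$ and $j''=2k-v+2$. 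Feeding these into \eqref{eqn:Cve} and expanding, the difference $C(v,e')-2(v-1)^2-k(k-1)^2$ collapses to $-(v^2-3v+2)+2(k^2-k)$, which vanishes precisely by the Pell identity. Hence $S(v,e)=C(v,e)$.

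Finally I would invoke Theorem \ref{thm:main2}. Because $S(v,e)=C(v,e)$, both hypotheses $S\ge C$ and $S\le C$ hold: the former forces $\pi_{1.1}$ and $\pi_{1.2}$ to be optimal, the latter forces $\pi_{2.1}$ to be optimal. Since every optimal partition in $\ds(v,e)$ is one of the six listed in Theorem \ref{thm:main2}, and only these three exist, there are exactly three optimal partitions; they are pairwise distinct because their largest parts $v-1,v-2,k-1$ are distinct. This is the claimed conclusion.
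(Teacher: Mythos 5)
Your proposal is correct, and the combinatorial bookkeeping (reading off $j=k$, $k'=k+1$, $j'=2k+2-v$ from the identity $(v-1)(v-2)=2k(k-1)$, bounding $v$ between $k+3$ and $2k$, and then checking which of the six partitions of Theorem \ref{thm:main2} exist) matches the paper's proof in Section 7 almost step for step; the paper uses the slightly sharper bound $k+2<v<2k-1$, which gives $j'\geq 4$ and kills $\pi_{1.3}$ immediately, whereas you rule out $j'=3$ by the separate observation that $v=2k-1$ forces $k\in\{1,3\}$ --- both work. The genuine difference is in how the crucial equality $S(v,e)=C(v,e)$ is obtained. The paper does not prove it inside the proof of Theorem \ref{thm:family3} at all: it is inherited from part 1 of Theorem \ref{thm:main3}, whose equality condition at $e=e_0=\binom{k_0}{2}$ is exactly the Pell equation \eqref{eqn:family3} (after one checks $k=k_0$, which the paper addresses for the analogous equations in Section \ref{sec:Pell}). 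You instead prove the equality directly: from \eqref{eqn:SCconnection} and the Pell identity, $4e-v(v-1)=-2(v-1)$, so $S(v,e)=C(v,e')-2(v-1)^2$ with $e'=\binom{k}{2}+(v-1)$, and expanding $C(v,e')$ via \eqref{eqn:Cve} with parameters $k''=k+1$, $j''=2k+2-v$ reduces the difference to $-(v^2-3v+2)+2(k^2-k)=0$. I verified this algebra (e.g.\ at $(v,k)=(22,15)$ both sides equal $2940$), and it is a nice self-contained alternative: it buys independence from Theorem \ref{thm:main3} (and from the verification that $k=k_0$), at the cost of an explicit polynomial computation that the paper's route avoids.
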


\begin{theorem} \label{thm:family4}
Let $v>9$ and $k$ be positive integers that satisfy the Pell's Equation
\begin{equation} \label{eqn:family4}
(2v-1)^2-2(2k+1)^2=-49
\end{equation}
and $e=m=\frac{1}{2}\binom{v}{2}$.  Then (using the notation of Theorem \ref{thm:main2}), $j=j^\prime=3$, $k=k^\prime$, and there are exactly four optimal partitions in $\ds(v,e)$, namely
\begin{eqnarray*}
\pi_{1.1}&=& (v-1,v-2, \ldots , k+1, 3)\\
\pi_{1.3}&=&(v-1,v-2, \ldots, k+1,2,1)\\
\pi_{2.1} &=& (k-1,k-2,\ldots,4,2,1)\\
\pi_{2.3} &=& (k-1,k-2,\ldots,4,3).
\end{eqnarray*}
The partitions $\pi_{1.2}$ and $\pi_{2.2}$ do not exist.
\end{theorem}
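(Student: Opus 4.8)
The plan is to read everything off two inputs: that $e=m$ is the exact midpoint of $[0,\binom{v}{2}]$, and that $(v,k)$ lies on the conic $(2v-1)^2-2(2k+1)^2=-49$. Since $e=m$, the complementary edge count is $e'=\binom{v}{2}-e=m=e$. The quasi-complete and quasi-star representations are unique, so comparing $e=\binom{k+1}{2}-j$ with $e'=\binom{k'+1}{2}-j'$ and using $e=e'$ forces $k=k'$ and $j=j'$. Evaluating the point-symmetry relation $S(v,e)-C(v,e)=-\left(S(v,e')-C(v,e')\right)$ at $e=e'=m$ gives $S(v,e)-C(v,e)=-\left(S(v,e)-C(v,e)\right)$, hence $S(v,e)=C(v,e)$; in particular both $S\ge C$ and $S\le C$ hold.

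Next I would reduce the Pell equation. Expanding and simplifying $(2v-1)^2-2(2k+1)^2=-49$ yields $v(v-1)=2k^2+2k-12$, so $e=\tfrac14 v(v-1)=\tfrac12(k^2+k-6)=\binom{k+1}{2}-3$ (an integer, since $k^2+k$ is even). Thus $j=3$, and by the first step $j'=3$ as well. From $v\ge 10$ we get $k^2+k=\tfrac12 v(v-1)+6\ge 51$, so $k\ge 7$; in particular $1\le j=j'=3\le k=k'$. Because $j=j'=3$ and $v>9$, the partitions $\pi_{1.3}$ and $\pi_{2.3}$ exist, while $\pi_{1.1}$ and $\pi_{2.1}$ always do; substituting $k=k'$ and $j=j'=3$ into the formulas of Theorem \ref{thm:main2} produces the four explicit partitions in the statement. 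Moreover, since $v\ge 8$ and $4\le e\le\binom{v}{2}-4$, all of the $\pi_{i.j}$ are pairwise distinct whenever they exist.

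The crux is to rule out $\pi_{1.2}$ and $\pi_{2.2}$. Each exists exactly when $k+1\le 2k-j-1\le v-1$; with $k=k'$ and $j=j'=3$ this is the single condition $k+1\le 2k-4\le v-1$, so it suffices to break the upper bound by showing $2k-4\ge v$. Setting $f(t)=t^2-t$ (increasing for $t\ge 1$), I would compute $f(2k-3)-f(v)=(4k^2-14k+12)-(2k^2+2k-12)=2(k-2)(k-6)$, which is positive for $k\ge 7$; since $f$ is increasing this gives $2k-3>v$, so $v\le 2k-4$ and hence $2k-4\ge v>v-1$. The upper bound therefore fails for both partitions, so neither $\pi_{1.2}$ nor $\pi_{2.2}$ exists. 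I expect this to be the main obstacle: the constant $-49$ is tuned so that the single Pell relation simultaneously drives $j$ down to $3$ (forcing $\pi_{1.3}$ and $\pi_{2.3}$ into existence) and drives the quantity $2k-j-1=2k-4$ up to at least $v$ (forcing $\pi_{1.2},\pi_{2.2}$ out), and the smallest solution $(v,k)=(12,8)$ lies exactly on $2k-4=v$, so the inequality is tight and the two existence facts pull in opposite directions.

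Finally I would assemble. By Theorem \ref{thm:main2} every optimal partition is one of the six $\pi_{i.j}$, and only $\pi_{1.1},\pi_{1.3},\pi_{2.1},\pi_{2.3}$ exist here. Since $S(v,e)\ge C(v,e)$, the quasi-star clause of Theorem \ref{thm:main2} makes $\pi_{1.1}$ and $\pi_{1.3}$ optimal; since $S(v,e)\le C(v,e)$, the quasi-complete clause makes $\pi_{2.1}$ and $\pi_{2.3}$ optimal. These four are pairwise distinct, so there are exactly four optimal partitions, as claimed.
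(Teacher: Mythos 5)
Your proof is correct and follows essentially the same route as the paper: reduce the Pell equation to $m=\binom{k+1}{2}-3$, use $e=e'=m$ to force $k=k'$, $j=j'=3$ and $S(v,m)=C(v,m)$ (so both clauses of Theorem \ref{thm:main2} apply), and eliminate $\pi_{1.2},\pi_{2.2}$ by showing $2k-4>v-1$. The only difference is cosmetic: where the paper simply asserts $k>(v+3)/2$ for $v>9$, you verify $2k-3>v$ explicitly via $f(2k-3)-f(v)=2(k-2)(k-6)>0$ for $k\geq 7$, a slightly more careful rendering of the same step.
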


\subsection{Quasi-star versus quasi-complete} \label{sec:QSvsQC}
In this section, we compare $S(v,e)$ and $C(v,e)$. The main result
of the section, Theorem \ref{thm:main3}, is a theorem very much like
Lemma 8 of \cite{AK}, with the addition that our results give
conditions for equality of the two functions.

If $e=0,1,2,3$, then $S(v,e)=C(v,e)$ for all $v$.  Of course, if
$e=0$, $e=1$ and $v \geq 2$, or $e \leq 3$ and $v=3$, there is only
one graph in the graph class $\G(v,e)$.  If $e=2$ and $v \geq 4$,
then there are two graphs in the graph class $\G(v,2)$: the path $P$
and the partial matching $M$, with degree sequences $(2,1,1)$ and
$(1,1,1,1)$, respectively. The path is optimal as $P_2(P)=6$ and
$P_2(M)=4$. But the path is both the \qs and the \qc graph in
$\G(v,2)$.  If $e=3$ and $v \geq 4$, then the \qs graph has degree
sequence $(3,1,1,1)$ and the \qc graph is a triangle with degree
sequence $(2,2,2)$. Since $P_2(G)=12$ for both of these graphs, both
are optimal. Similarly, $S(v,e)=C(v,e)$ for $e=\tbinom{v}{2}-j$ for
$j=0,1,2,3$.

Now, we consider the cases where $4 \leq e \leq \binom{v}{4}-4$.
Figures \ref{fig:V25}, \ref{fig:V15}, \ref{fig:V17}, and
\ref{fig:V23} show the values of the difference $S(v,e)-C(v,e)$.
When the graph is above the horizontal axis, $S(v,e)$ is strictly
larger than $C(v,e)$ and so the \qs graph is optimal and the \qc is
not optimal.  And when the graph is on the horizontal axis,
$S(v,e)=C(v,e)$ and both the \qs and the \qc graph are optimal.
Since the function $S(v,e)-C(v,e)$ is central symmetric, we shall
consider only the values of $e$ from $4$ to the midpoint, $m$,  of
the interval $[0,\binom{v}{2}]$.

Figure \ref{fig:V25} shows that $S(25,e) > C(25,e)$ for all values
of $e$: $4 \leq e < m=150$.  So, when $v=25$, the \qs graph is
optimal for $0 \leq e < m=150$ and the \qc graph is not optimal. For
$e=m(25)=150$, the \qs and the \qc graphs are both optimal.
\begin{figure}[htbp]
\begin{center}
\epsfig{file=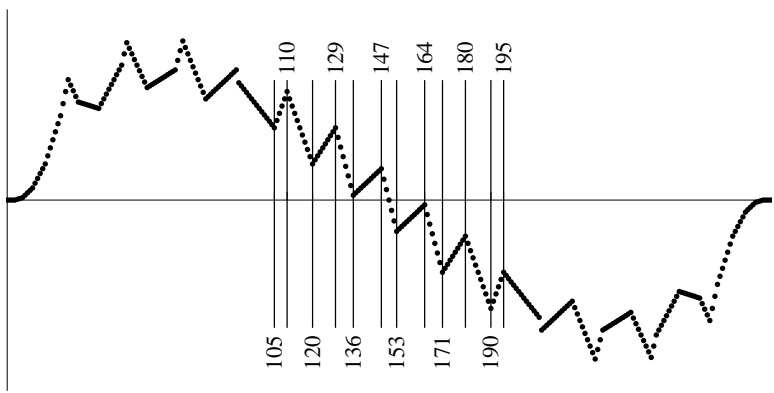,width=4in}
\caption{$S(25,e)-C(25,e)> 0$ for $4 \leq e < m=150$
}
\label{fig:V25}
\end{center}
\end{figure}

Figure \ref{fig:V15} shows that $S(15,e)>C(15,e)$ for $4 \leq e<45$
and $45 < e \leq m=52.5$.  But $S(15,45)=C(15,45)$.  So the \qs
graph is optimal and the \qc graph is not optimal for all $0 \leq e
\leq 52$ except for $e=45$.  Both the \qs and the \qc graphs are
optimal in $\G(15,45)$.
\begin{figure}[htbp]
\begin{center}
\epsfig{file=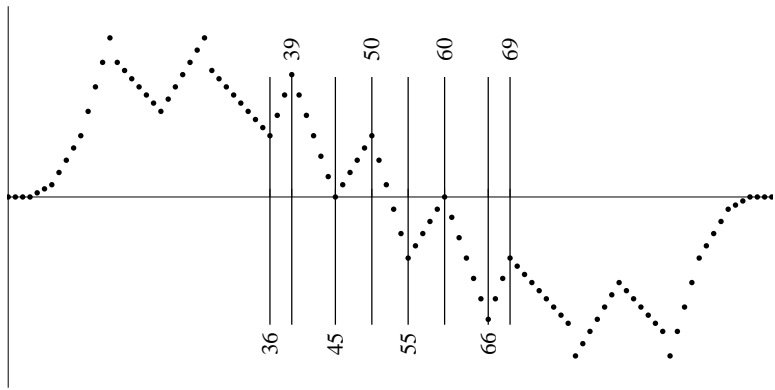,width=4in}
\caption{$S(15,e)-C(15,e)> 0$ for $4 \leq e <45$ and for $45 < e \leq m=52.5$
}
\label{fig:V15}
\end{center}
\end{figure}

Figure \ref{fig:V17} shows that $S(17,e)>C(17,e)$ for $4 \leq e<63$,
$S(17,64)=C(17,64)$,  $S(17,e)<C(17,e)$ for $65 \leq e < m=68$, and
$S(17,68)=C(17,68)$.
\begin{figure}[htbp]
\begin{center}
\epsfig{file=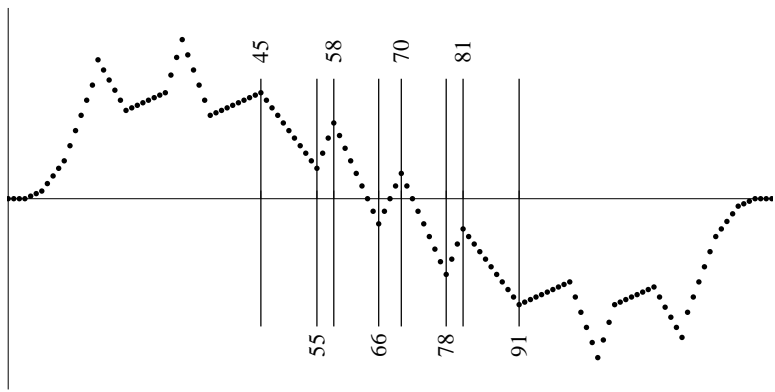,width=4in}
\caption{$S(17,e)-C(17,e)>0$ for $4 \leq e \leq 63$ and $65 \leq e
<m=68$
}
\label{fig:V17}
\end{center}
\end{figure}

Finally, Figure \ref{fig:V23} shows that $S(23,e)>C(23,e)$ for $4
\leq e \leq 119$, but $S(23,e)=C(23,e)$ for $120 \leq e \leq
m=126.5$.
\begin{figure}[htbp]
\begin{center}
\epsfig{file=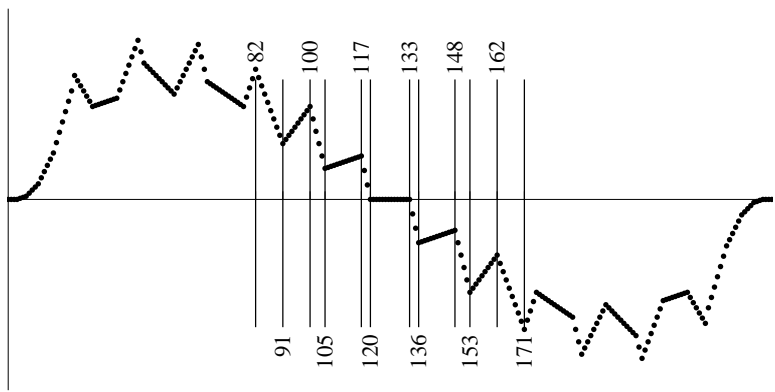,width=4in}
\caption{$S(23,e)-C(23,e)> 0$ for $4 \leq e \leq119$,
$S(23,e)=C(23,e)$ for $120 \leq e < m=126.5$
}
\label{fig:V23}
\end{center}
\end{figure}

These four examples exhibit the types of behavior of the function
$S(v,e)-C(v,e)$, for fixed $v$.   The main thing that determines this behavior  is the quadratic function

\[
 q_0(v)  :=  \frac{1}{4} \left( 1-2(2k_0-3)^2+ (2v-5)^2 \right).
 \]
(The integer $k_0=k_0(v)$ depends on $v$.)
For example, if $q_0(v)>0$, then $S(v,e)-C(v,e) \geq 0$ for all values of $e<m$.  To describe the behavior of  $S(v,e)-C(v,e)$ for $q_0(v)<0$, we need to define
\[
R_0=R_0(v)  =   \frac{8(m-e_0)(k_0-2)}{-1-2(2k_0-4)^2+(2v-5)^2},
\]
where
\[
e_0 = e_0(v)=\binom{k_0}{2}=m-b_0
\]
Our third main theorem is the following:

\begin{theorem} \label{thm:main3}
Let $v$ be a positive integer
   \begin{enumerate}
     \item If $q_0(v) > 0$, then
     \begin{eqnarray*}
         S(v,e) \geq C(v,e) & \mbox{ for all } & 0 \leq e \leq
         m \mbox{ and } \\
          S(v,e) \leq C(v,e) & \mbox{ for all } & m \leq e \leq
         \tbinom{v}{2}.
     \end{eqnarray*}
     $S(v,e)=C(v,e)$ if and only if $e,e' \in \{ 0,1,2,3,m \}$, or
      $e,e^\prime=e_0$ and $(2v-3)^2-2(2k_0-3)^2=-1,7$.
     \item If $q_0(v) <0$, then
     \begin{eqnarray*}
           C(v,e) \leq S(v,e) & \mbox{ for all } & 0 \leq e \leq m-R_0 \\
           C(v,e) \geq S(v,e) & \mbox{ for all } & m-R_0 \leq e \leq m
                \\
           C(v,e) \leq S(v,e) & \mbox{ for all } & m  \leq e \leq m+R_0 \\
           C(v,e) \geq S(v,e) & \mbox{ for all } & m+R_0 \leq e \leq
           \tbinom{v}{2}.
     \end{eqnarray*}
     $S(v,e)=C(v,e)$ if and only if $e,e^\prime \in \{
     0,1,2,3,m-R_0,m \}$
     \item If $q_0(v)=0$, then
     \begin{eqnarray*}
         S(v,e) \geq C(v,e) & \mbox{ for all } & 0 \leq e \leq
         m \mbox{ and } \\
          S(v,e) \leq C(v,e) & \mbox{ for all } & m \leq e \leq
         \tbinom{v}{2}.
     \end{eqnarray*}
     $S(v,e)=C(v,e)$ if and only if $e,e' \in \{ 0,1,2,3,e_0,..., m \}$
   \end{enumerate}
\end{theorem}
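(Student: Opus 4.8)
The plan is to reduce everything to an exact closed form for $S(v,e)-C(v,e)$ and then read off the three cases by locating its zeros. Write $f(v,e)=S(v,e)-C(v,e)$. First I would use the point‑symmetry recorded after (\ref{eqn:SCconnection}), namely $f(v,e)=-f(v,e')$ with $e'=\binom{v}{2}-e$, so that it suffices to analyze $0\le e\le m$; the statements on $[m,\binom{v}{2}]$ follow by reflection. The key computational input is an exact formula for $C$ inside a single ``block'' $\binom{k}{2}\le e<\binom{k+1}{2}$. Substituting $j=k-a$ (so $e=\binom{k}{2}+a$, $0\le a\le k-1$) into (\ref{eqn:Cve}) and using $k^2-(k-1)^2=2k-1$ gives
\[
C\bigl(v,\tbinom{k}{2}+a\bigr)=k(k-1)^2+a(2k-1)+a^2,
\]
a clean quadratic in the offset $a$ whose value is independent of $v$. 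Through (\ref{eqn:SCconnection}) this makes $f(v,e)$ an explicit, piecewise‑quadratic function of $e$.

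The heart of the matter is the behaviour near the midpoint. Suppose $e=\binom{k_0}{2}+a$ and its reflection $e'=\binom{k_0}{2}+(2b_0-a)$ both lie in the central block $k=k_0$; this holds for every $e\in[e_0,m]$ exactly when $2b_0<k_0$. Substituting the displayed formula for both $C(v,e)$ and $C(v,e')$ into (\ref{eqn:SCconnection}), the quadratic terms cancel because $a+(2b_0-a)=2b_0$, and after using $v(v-1)=4m$ the whole expression collapses to the identity
\[
f(v,e)=(m-e)\,q_0(v),\qquad q_0(v)=2\bigl(2k_0+2b_0-2v+1\bigr),
\]
and one checks directly that this agrees with the defining quadratic $\tfrac14\bigl(1-2(2k_0-3)^2+(2v-5)^2\bigr)$. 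This single identity already yields Cases 1 and 3 on the part of $[0,m]$ covered by the central block: the sign of $f$ there is exactly the sign of $q_0$, with $f\equiv 0$ across the block when $q_0=0$ (which forces $\binom{v}{2}$ odd and $2b_0=2v-2k_0-1$), producing the interval $e_0,\dots,m$ of equalities in Case 3.

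To finish I would treat the adjacent cross‑block regions and pin down $R_0$. For $e$ in block $k_0-1$ with mirror in block $k_0$, or $e$ in block $k_0$ with mirror pushed into block $k_0+1$ (the regime $2b_0\ge k_0$), the two quadratics no longer match and substituting the block formulas gives $f$ as an explicit quadratic in the offset. In Case 2 ($q_0<0$) this quadratic has a single relevant root; solving for it yields the crossing $e=m-R_0$ with $R_0$ as defined, after checking that the root lands in the asserted block so that there is exactly one sign change on $[0,m]$. That there are no further sign changes I would obtain by invoking Theorem 3 and Lemma 8 of \cite{AK}, which confine any crossing to $|e-m|<v/2$ and allow at most one crossing on $[0,m]$; the translation of Lemma 8's sub‑cases into a single condition is immediate, since $2k_0+2b_0<2v-1$ is precisely $q_0<0$. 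The equality analysis is the same computation run once more: setting the cross‑block quadratic equal to $0$ at $e=e_0$ and completing the square collapses it to $(X-7)(X+1)=0$, where $X$ is the Pell form of the statement, so $X\in\{-1,7\}$; the endpoints $e\in\{0,1,2,3\}$ are handled by the direct computations recorded just before the theorem.

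The main obstacle is the cross‑block bookkeeping. Near $m$ one must know, for each $(v,e)$, which blocks contain $e$ and its mirror $e'$; this is governed by $2b_0$ versus $k_0$ together with the parity of $\binom{v}{2}$, and it is exactly what splits Lemma 8 of \cite{AK} into several sub‑cases. Consolidating these into the single sign condition on $q_0$ requires the inequality $3k_0\ge 2v-1$, so that $2b_0\ge k_0$ already forces $q_0\ge 0$ and no spurious crossing is predicted; this holds because $\binom{k_0}{2}\le m<\binom{k_0+1}{2}$ gives $k_0\approx v/\sqrt2$, but the finitely many small borderline values of $v$ must be checked by hand. The residual work---locating the unique root of the cross‑block quadratic, confirming it lies in the claimed block, and separating the $-1$ and $7$ branches of the Pell equation---is elementary but delicate, and is where, as Erd\H{o}s remarked, ``the solution is more difficult than one would expect.''
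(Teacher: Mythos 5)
Your central computations match the paper's: the block formula $C(v,\binom{k}{2}+a)=k(k-1)^2+a(2k-1)+a^2$ is the paper's Equation (\ref{eqn:Ckt}), the identity $S(v,e)-C(v,e)=(m-e)q_0(v)$ when $e$ and $e'$ both lie in block $k_0$ is its Equation (\ref{eqn:diffnearm}), and the factorization of $\DF(v,e_0)$ into $(X+1)(X-7)/64$ with $X=(2v-3)^2-2(2k_0-1)^2$ is exactly how the Pell condition arises there. Your formula $q_0(v)=2(2k_0+2b_0-2v+1)$ is also correct (the paper's parenthetical remark relating $q_0$ to $2k_0+2b_0-(2v-1)$ has a sign slip). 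One local correction: the cross-block difference is \emph{linear}, not quadratic, in the offset --- since $e'$ decreases affinely with slope $-1$ as $e$ increases, the $a^2$ term of $C(v,e)$ and the $(a')^2$ term of $C(v,e')$ cancel in $S-C$ regardless of which blocks contain $e$ and $e'$. This is the paper's Lemma \ref{lem:linearity}, and it matters: it lets you determine the sign of $\DF$ on an entire interval between consecutive breakpoints $\binom{j}{2}$, $\binom{v}{2}-\binom{j}{2}$ from its values at the two endpoints, and it makes $m-R_0$ the root of a linear, not quadratic, equation.

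The genuine gap is in the ``if and only if'' part. Your plan certifies equality at $e\in\{0,1,2,3\}$, at $e_0$ under the Pell condition, and at $m-R_0$, but it never shows that $S(v,e)>C(v,e)$ \emph{strictly} everywhere else on $[4,m)$. Invoking Theorem 3 and Lemma 8 of \cite{AK} cannot close this, because those results only assert $\max(v,e)=S(v,e)$, i.e.\ $S\ge C$, away from the middle; they are silent about additional equality points, and ruling those out is precisely the new content of Theorem \ref{thm:main3} over \cite{AK}. Concretely, by piecewise linearity a zero of $\DF$ in a region where $\DF\ge0$ must occur at a breakpoint or along a whole linear piece, so you must verify $\DF>0$ at every breakpoint $\binom{j}{2}$ and $\binom{v}{2}-\binom{j}{2}$ lying below the central region. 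The paper spends Lemmas \ref{lem:upperbS} through \ref{lemma:e1f2} on exactly this, sandwiching $C(v,e)$ between $e(\sqrt{8e+1}-1.5)$ and $e(\sqrt{8e+1}-1)$ to prove the strict inequality $S(v,e)>C(v,e)$ for $4\le e\le\max\{e_1,f_2\}$, which reduces the breakpoint check to $e_0$ and $f_1$ only; your outline has no counterpart to this step, and without it the characterization of the equality set is unproved.
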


The  conditions in  Theorem \ref{thm:main3} involving the quantity
$q_0(v)$ simplify and refine the conditions in \cite{AK} involving
$k_0$ and $b_0$. The condition $2b_0 \geq k_0$ in Lemma 8 of
\cite{AK} can be removed and the result  restated in terms of the
sign of the quantity $2k_0+2b_0-(2v-1)=-2q_0(v)$. While \cite{AK}
considers only the two cases $q_0(v) \leq 0$ and $q_0(v)>0$, we
analyze the case $q_0(v)=0$  separately.

It is apparent from Theorem \ref{thm:main3} that $S(v,e) \geq
C(v,e)$ for $0 \leq e \leq m-\alpha v$ if $\alpha>0$ is large
enough.  Indeed,  Ahlswede and Katona \cite[Theorem 3]{AK} show this
for $\alpha = 1/2$, thus establishing an inequality that holds for
all values of $v$ regardless of the sign of $q_0(v)$.  We improve
this result and show that the inequality holds when $\alpha =
1-\sqrt{2}/2 \approx 0.2929$.

\begin{corollary} \label{cor:uniform}
   Let $\alpha=1-\sqrt{2}/2$. Then $S(v,e) \geq C(v,e)$ for all $0
   \leq e \leq m-\alpha v$ and $S(v,e) \leq C(v,e)$ for all $m+\alpha v
   \leq e \leq \binom{v}{2}$. Furthermore, the constant $\alpha$
   cannot be replaced by a smaller value.
\end{corollary}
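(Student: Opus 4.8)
The plan is to deduce Corollary~\ref{cor:uniform} from Theorem~\ref{thm:main3}, showing that the only substantive case is $q_0(v)<0$, where the claim collapses to the single inequality $R_0(v)\le\alpha v$. First I would dispose of the easy cases. When $q_0(v)\ge 0$ (parts 1 and 3 of Theorem~\ref{thm:main3}) one has $S(v,e)\ge C(v,e)$ on the \emph{entire} range $0\le e\le m$, hence a fortiori on the sub-interval $0\le e\le m-\alpha v$. I would also reduce the second assertion to the first: by the central symmetry $S(v,e)-C(v,e)=-\bigl(S(v,e')-C(v,e')\bigr)$ with $e'=\binom{v}{2}-e$ noted in the introduction, the statement $S\le C$ on $[m+\alpha v,\binom{v}{2}]$ is exactly the image under $e\mapsto e'$ of the statement $S\ge C$ on $[0,m-\alpha v]$. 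Thus everything reduces to $q_0(v)<0$, where part 2 gives $S\ge C$ precisely on $[0,m-R_0]$; so $S\ge C$ holds throughout $[0,m-\alpha v]$ if and only if $R_0(v)\le\alpha v$.

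The core of the proof is therefore the estimate $R_0(v)\le\alpha v$ whenever $q_0(v)<0$. Write $N=8b_0(k_0-2)$ and $D=(2v-5)^2-2(2k_0-4)^2-1$ for the numerator and denominator of $R_0$. I would first record that $D>0$ here (this follows from $\binom{k_0}{2}\le m<\binom{k_0+1}{2}$ together with $q_0<0$; indeed a short computation gives $D=4q_0+8(k_0-2)$). Since $R_0\le v/2$ by Lemma~8 of \cite{AK}, the integer $vD-N$ is positive, and because $\alpha=1-1/\sqrt{2}$ the inequality $N\le\alpha vD$ is equivalent to $\sqrt{2}\,(vD-N)\ge vD$, with both sides positive. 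Squaring clears the radical and turns the goal into the integer polynomial inequality
\[
(vD)^2-4vD\,N+2N^2\ \ge\ 0,
\qquad\text{equivalently}\qquad
vD\ \ge\ (2+\sqrt{2})\,N ,
\]
in the variables $v,k_0$. I expect this verification---expanding $N$ and $D$ as explicit polynomials in $v,k_0$ and controlling the resulting degree-six expression via the defining inequalities $0\le b_0<k_0$ and $q_0<0$---to be the main obstacle; it is purely algebraic but heavy. The appearance of $\sqrt{2}$, and hence of $\alpha=1-\sqrt{2}/2$, is structural: equality in the squared inequality is forced on the Pell boundary $q_0=0$, i.e.\ $(2v-5)^2-2(2k_0-3)^2=-1$, which is exactly where $R_0/v$ attains its limiting value.

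For the sharpness claim I would exhibit an explicit infinite family along which $R_0(v)/v\to\alpha$. The mechanism is transparent asymptotically: since $\binom{k_0}{2}\le m<\binom{k_0+1}{2}$ forces $k_0\sim v/\sqrt{2}$, the substitution $W=v^2-2k_0^2$ turns $R_0/v$ into a linear-fractional (M\"obius) function of $W/v$ plus lower-order terms; this function is increasing and has supremum exactly $1-\sqrt{2}/2$ as $W/v\to 5-3\sqrt{2}$, the condition $q_0\to 0^-$. A congruence mod $8$ shows $(2v-5)^2-2(2k_0-3)^2\equiv 7\pmod 8$, so $q_0$ is always even and the largest admissible negative value is $q_0=-2$, i.e.\ $(2v-5)^2-2(2k_0-3)^2=-9$. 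This Pell-type equation has infinitely many solutions (cf.\ Section~\ref{sec:Pell}), generated from $(2v-5,2k_0-3)=(21,15)$ by the automorphism $(Y,X)\mapsto(3Y+4X,\,2Y+3X)$ and yielding $v=13,64,361,\dots$. For each member I would confirm that $k_0(v)$ is the correct index and compute $R_0(v)/v$, obtaining an increasing sequence converging to $1-\sqrt{2}/2$. Hence no $\alpha'<\alpha$ can work, which completes the proof.
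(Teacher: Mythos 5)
Your reduction to the single inequality $R_0(v)\le\alpha v$ in the case $q_0(v)<0$ is correct and is exactly the paper's strategy, as is your sharpness argument: the paper also takes the Pell family $q_0(v)=-2$, i.e.\ $(2v-5)^2-2(2k_0-3)^2=-9$, and computes that $(\beta v q_1(v)-(m-e_0)(2k_0-4))/v^2\to\sqrt2\,\beta-\sqrt2+1$ as $v\to\infty$, forcing $\beta\ge\alpha$. (Your seed $(2v-5,2k_0-3)=(21,15)$ with $v=13,64,361,\dots$ is in fact the correct orbit.)

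The genuine gap is that the central estimate $R_0\le\alpha v$ is never actually proved. You rewrite it as the polynomial inequality $(vD)^2-4vD\,N+2N^2\ge0$ and then state that you ``expect this verification \dots\ to be the main obstacle; it is purely algebraic but heavy.'' That verification \emph{is} the content of the corollary beyond Theorem \ref{thm:main3}; deferring it leaves the proof incomplete, and it does not follow from the loose constraints $0\le b_0<k_0$ and $q_0<0$ alone without exploiting how tightly they pin $k_0$ to $\sqrt2\,v/2$. The paper avoids the heavy algebra with a short convexity argument: writing $R_0=(m-e_0)(2k_0-4)/q_1(v)$ with $q_1(v)=D/4$, it suffices to show $h(k_0)>0$ for $h(x)=\alpha v\,q_1(v,x)-\bigl(m-\tbinom{x}{2}\bigr)(2x-4)$; the hypotheses $q_0(v)<0$ and $e_0\le m$ confine $x=k_0$ to the interval $I=\bigl[\tfrac{\sqrt2}{2}v-\tfrac{5\sqrt2}{4}+\tfrac32,\ \tfrac{\sqrt2}{2}v+\tfrac12\bigr]$ of length less than $1$, on which $h''(x)=-6-(4-2\sqrt2)v+6x>0$ and $h'(i_1)>0$ for $v\ge11$, whence $h(x)>h(i_1)>0$ there, with $v=7,10$ checked by hand. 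Some such argument (or an honest completion of your degree-six expansion) must be supplied. A smaller point: you invoke Lemma 8 of \cite{AK} to get $R_0\le v/2$, but that presupposes identifying the explicit $R_0$ with the $R$ of \cite{AK}; the positivity of $D$ and of $vD-N$ should instead be derived directly from $q_0(v)<0$ and $b_0<k_0$.
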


Theorem 3 in \cite{AK} can be improved in another way.   The inequalities are actually strict.

\begin{corollary} \label{cor:strictuniform}
   $S(v,e)>C(v,e)$ for $4 \leq e < m-v/2$ and
   $S(v,e)<C(v,e)$ for $m+v/2 <e \leq \binom{v}{2}-4$.
\end{corollary}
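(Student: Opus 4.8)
The plan is to reduce first to a single half of the range. By the point-symmetry of $S(v,e)-C(v,e)$ about $e=m$ established just after (\ref{eqn:SCconnection}), the asserted inequality $S(v,e)<C(v,e)$ on $m+v/2<e\le\binom v2-4$ is exactly the reflection $e\mapsto e'=\binom v2-e$ of the inequality $S(v,e)>C(v,e)$ on $4\le e<m-v/2$. So it suffices to prove the latter, and the former then comes for free.

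Next I would separate the non-strict and the strict content. Theorem~3 of \cite{AK} gives $\max(v,e)=S(v,e)$, hence $S(v,e)\ge C(v,e)$, for all $0\le e<m-v/2$, and in particular throughout $[4,m-v/2)$ (Corollary~\ref{cor:uniform} yields the same, since $1-\sqrt2/2<\tfrac12$). The whole task is therefore to \emph{upgrade $\ge$ to $>$}, i.e.\ to show that the equality set $\{e:S(v,e)=C(v,e)\}$ is disjoint from $[4,m-v/2)$. For this I would quote the exact equality characterization of Theorem~\ref{thm:main3} and verify, according to the sign of $q_0(v)$, that every equality value lying in $[0,m)$ is either one of $0,1,2,3$ (excluded because $e\ge4$) or is at least $m-v/2$.

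The three cases run as follows. If $q_0(v)>0$, the only candidate equality value below $m$ other than $\{0,1,2,3\}$ is the conditional value $e_0$; but the one-line identity $(2v-3)^2-2(2k_0-3)^2=4q_0(v)+8v-17$ shows that each admissible value $-1,7$ of the left side forces $q_0(v)<0$, contradicting $q_0(v)>0$, so this clause is vacuous. If $q_0(v)=0$, the equality set in $[0,m)$ is $\{0,1,2,3\}\cup[e_0,m)$; here $q_0(v)=0$ means $2k_0+2b_0=2v-1$, which combined with $b_0<k_0$ (always true from the definitions of $k_0$ and $b_0$) gives $b_0<v/2$, so $e_0=m-b_0>m-v/2$ and $[e_0,m)$ misses the range. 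The remaining case $q_0(v)<0$ is the hard one: the single interior equality value is $m-R_0$, and I must show $R_0\le v/2$ so that $m-R_0\ge m-v/2$.

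The main obstacle is thus the bound $R_0\le v/2$ when $q_0(v)<0$, and I envisage two routes. The structural route observes that $q_0(v)<0$ is equivalent to $2k_0+2b_0<2v-1$, and that in this regime one necessarily has $2b_0<k_0$ as well: otherwise $2b_0\ge k_0$ together with $2k_0+2b_0<2v-1$ would give $3k_0<2v-1$, contradicting the bound $k_0\ge(2v-1)/3$ that $\binom{k_0+1}2>m$ imposes for $v\ge4$. These two inequalities are precisely the hypotheses of the nontrivial case of Lemma~8 of \cite{AK}, whose conclusion includes $R\le\min\{v/2,k_0-b_0\}\le v/2$, and Theorem~\ref{thm:main3} identifies $R$ with $R_0$. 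The self-contained route substitutes the closed form $R_0=8b_0(k_0-2)/\bigl(4q_0(v)+8(k_0-2)\bigr)$ (using $-1-2(2k_0-4)^2+(2v-5)^2=4q_0(v)+8(k_0-2)$) and reduces $R_0\le v/2$ to the polynomial inequality $2(k_0-2)(2b_0-v)\le v\,q_0(v)$, after which one eliminates $b_0=m-\binom{k_0}2$ and applies $q_0(v)<0$; this is elementary but tedious. Once $R_0\le v/2$, the bound $b_0<v/2$, and the vacuity in the $q_0(v)>0$ case are in hand, the equality set avoids $[4,m-v/2)$, the inequality of \cite{AK} becomes strict there, and the symmetric statement on $(m+v/2,\binom v2-4]$ follows by the reflection of the first paragraph.
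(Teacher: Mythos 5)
Your overall architecture---reduce to $[4,m-v/2)$ by the point-symmetry following Equation (\ref{eqn:SCconnection}), start from the non-strict inequality, and then show that the equality set described in Theorem \ref{thm:main3} misses $[4,m-v/2)$---is the same as the paper's, and your treatment of the cases $q_0(v)=0$ and $q_0(v)<0$ is sound (the paper handles the latter via the stronger bound $R_0\le(1-\sqrt2/2)v$ proved for Corollary \ref{cor:uniform}; your appeal to Lemma 8 of \cite{AK} or to the closed form of $R_0$ accomplishes the same thing). The genuine gap is in the case $q_0(v)>0$: the clause ``$e=e_0$ and the Pell condition holds'' is \emph{not} vacuous there. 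For example $v=15$, $k_0=10$, $e_0=45$ has $q_0(15)=12>0$ and $S(15,45)=C(15,45)$ (see Figure \ref{fig:V15}), and there are infinitely many such $v$; these are exactly the cases that make the constant $v/2$ in the corollary sharp. Your vacuity argument rests on the form $(2v-3)^2-2(2k_0-3)^2=-1,7$ as printed in the statement of Theorem \ref{thm:main3}, but that is a typo: the proof of that theorem (and the example just given) show the correct condition is $(2v-3)^2-2(2k_0-1)^2=-1,7$. With $2k_0-1$ in place of $2k_0-3$ your identity becomes $(2v-3)^2-2(2k_0-1)^2=4q_0(v)+8v-16k_0-1$, and the values $-1$ and $7$ give $q_0(v)=4k_0-2v$ and $4k_0-2v+2$ respectively---both \emph{positive}, since $k_0>v/\sqrt2-1$ by Lemma \ref{lem:kzerobound}. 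So the equality cases at $e_0$ live precisely inside your ``vacuous'' case, and your argument as written would prove the false statement that $S(v,e)>C(v,e)$ on all of $[4,m)$ whenever $q_0(v)>0$.

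The repair is short and is exactly what the paper does: when $(2v-3)^2-2(2k_0-1)^2=-1$ or $7$, one computes
\[
m-e_0=\tfrac{1}{16}\left((2v-1)^2-2(2k_0-1)^2+1\right)=\tfrac{v-1}{2}\ \text{ or }\ \tfrac{v}{2},
\]
so $e_0\ge m-v/2$ and this equality point still falls outside $[4,m-v/2)$. It is this location computation, not vacuity, that lets the strict inequality survive on the stated range, and it simultaneously shows that $v/2$ cannot be replaced by anything smaller.
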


\subsection{Asymptotics and density} \label{sec:density}
We now turn to the questions asked in \cite{AK}:

What is the relative density of the positive integers $v$ for which
$\max(v,e)=S(v,e)$ for $0 \leq e < m$? Of course,
$\max(v,e)=S(v,e)$ for $0 \leq e \leq m$ if and only if
$\max(v,e)=C(v,e)$ for $m\leq e \leq \binom{v}{2}$.

\begin{corollary} \label{cor:density}
Let $t$ be a positive integer and let $n(t)$ denote the number of integers $v$ in the interval $[1,t]$ such that
\[
\max(v,e)=S(v,e),
\]
for all $0 \leq e \leq m$.
Then
\[
\lim_{t \rightarrow \infty} \frac{n(t)}{t} = 2-\sqrt{2} \approx
0.5858.
\]
\end{corollary}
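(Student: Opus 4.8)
The plan is to convert the count $n(t)$ into an equidistribution problem modulo $2$ driven by the irrationality of $\sqrt2$, so that $2-\sqrt2$ appears as a ratio of interval lengths.

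First I would pin down the arithmetic condition defining $n(t)$. By Theorem \ref{thm:main3}, the property ``$\max(v,e)=S(v,e)$ for all $0\le e\le m$'' means $S(v,e)\ge C(v,e)$ throughout the left half, which holds exactly when $q_0(v)\ge0$: cases~1 and~3 give precisely this, whereas in case~2 ($q_0(v)<0$) the relation $C(v,e)\ge S(v,e)$ on $[m-R_0,m]$ is strict at some integer $e<m$, so the property fails---except for those $v$ admitting no integer strictly between $m-R_0$ and $m$, a set clustering near solutions of a Pell equation (those $v$ with $b_0$ close to $0$) and therefore of density zero. Hence $n(t)=\#\{v\le t:\ q_0(v)\ge0\}+o(t)$, and it suffices to find the density of $\{v:\ q_0(v)\ge0\}$.

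Next I would make $q_0(v)\ge0$ explicit and couple it to $k_0$. Expanding $q_0$ gives $q_0(v)\ge0\iff 2k_0\le\phi(v)$ with $\phi(v)=3+\sqrt{((2v-5)^2+1)/2}$. Separately, completing the square in $\binom{k_0}{2}\le m<\binom{k_0+1}{2}$ yields $2(2k_0-1)^2-1\le(2v-1)^2<2(2k_0+1)^2-1$, i.e. $2k_0\in(\theta(v)-1,\theta(v)+1]$ where $\theta(v)=\sqrt{((2v-1)^2+1)/2}$; thus $2k_0$ is the unique even integer in a length-$2$ window about $\theta(v)$, and $\delta(v):=\theta(v)-2k_0\in[-1,1)$ records $\theta(v)$ modulo $2$. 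A difference-of-square-roots estimate then gives $\phi(v)-\theta(v)=3-2\sqrt2+O(1/v)$, so $2k_0\le\phi(v)$ is equivalent to $\delta(v)\ge 2\sqrt2-3+O(1/v)$.

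Finally I would invoke equidistribution. Since $\theta(v)=\sqrt2\,v-\tfrac{\sqrt2}{2}+O(1/v)$ and $\sqrt2\,v\bmod2=2\{v/\sqrt2\}$, the irrationality of $1/\sqrt2$ together with Weyl's theorem makes $\delta(v)$ equidistributed in $[-1,1)$. The set $\{\delta\ge2\sqrt2-3\}$ has length $1-(2\sqrt2-3)=4-2\sqrt2$ in the fundamental domain of length $2$, giving density $(4-2\sqrt2)/2=2-\sqrt2$. The main obstacle is not the equidistribution (routine Weyl) but the passage from the exact condition $q_0(v)\ge0$ to the fixed threshold $\delta(v)\ge2\sqrt2-3$: I would control the $O(1/v)$ drift of the threshold and the density-zero exceptional $v$ by sandwiching the threshold between the constants $2\sqrt2-3\pm\eta$ for all large $v$, applying equidistribution at these fixed levels, and letting $\eta\to0$, using that the boundary level set has measure zero because $2\sqrt2-3$ is irrational. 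This yields $\lim_{t\to\infty}n(t)/t=2-\sqrt2$.
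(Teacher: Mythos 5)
Your proposal is correct and takes essentially the same route as the paper: both reduce the property ``$\max(v,e)=S(v,e)$ for all $0\le e\le m$'' to the sign condition $q_0(v)\ge 0$ via Theorem \ref{thm:main3} (discarding a density-zero exceptional set of $v$ lying on a Pell conic, namely those with $b_0=0$), rewrite that condition as a constraint on the fractional part of $\tfrac{\sqrt2}{2}v$ plus a constant with the irrational threshold $\sqrt2-1$ (your $2\sqrt2-3$ on a period-$2$ window is the same threshold rescaled), and conclude by Weyl equidistribution that the density is $2-\sqrt2$. The only substantive difference is bookkeeping: you carry an $O(1/v)$ drift in the threshold and dispose of it with an $\eta$-sandwich plus the measure-zero boundary, whereas the paper eliminates the error term exactly by invoking the congruence $y^2-2x^2\equiv-1\pmod 8$ for odd $x,y$ to turn the defining inequalities for $k_0$ into strict ones away from the Pell solutions.
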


\subsection{Piecewise linearity of $S(v,e)-C(v,e)$} \label{sec:piecelinearity}
The diagonal sequence for a threshold graph helps explain the
behavior of the difference $S(v,e)-C(v,e)$ for fixed $v$ and $0 \leq
e \leq \binom{v}{2}$. From Figures  \ref{fig:V25}, \ref{fig:V15},
\ref{fig:V17}, and \ref{fig:V23}, we see that $S(v,e)-C(v,e)$,
regarded as a function of $e$, is piecewise linear and the ends of
the intervals on which the function is linear occur at
$e=\binom{j}{2}$ and $e=\binom{v}{2}-\binom{j}{2}$ for $j =1,2,
\ldots, v$. We prove this fact in Lemma \ref{lem:linearity}. For now, we present an example.

Take $v=15$, for example.  Figure \ref{fig:V15} shows linear
behavior on the intervals $[36,39]$, $[39,45]$, $[45,50]$, $[50,55]$,
$[55,60]$, $[60,66]$, and $[66,69]$.  There are 14 binomial
coefficients $\binom{j}{2}$ for $2 \leq j \leq 15$:
\[
1,3,6,10,15,21,28,36,45,55,66,78,91,105.
\]
The complements with respect to $\binom{15}{2}=105$ are
\[
104, 102, 99, 95, 90, 84, 77, 69, 60, 50, 39, 27, 14, 0.
\]
The union of these two sets of integers coincide with the end points
for the intervals on which $S(15,e)-C(15,e)$ is linear.  In this
case, the function is linear on the 27  intervals with end points:
\[
0,1,3,6,10,14,15,21,27,28,36,39,45,50,55,60,66,69,77,78,84,90,91,95,99,102,104,105.
\]
These special values of $e$ correspond to special types of \qs and
\qc graphs.

If $e=\binom{j}{2}$, then the \qc graph $\QC(v,e)$ is the sum of a
complete graph on $j$ vertices and $v-j$ isolated vertices.  For
example, if $v=15$ and $j=9$, and $e=\binom{9}{2}=36$, then the
upper-triangular part of the adjacency matrix for $\QC(15,21)$ is
shown on the left in Figure \ref{fig:V15E36789}. And if
$e=\binom{v}{2}-\binom{j}{2}$, then the \qs graph $\QS(v,e)$ has $j$
dominant vertices and none of the other $v-j$ vertices are adjacent
to each other.  For example, the lower triangular part of the
adjacency matrix for the \qs graph with $v=15$, $j=12$, and
$e=\binom{14}{2}-\binom{12}{2}=39$, is shown on the right in Figure
\ref{fig:V15E36789}.

\begin{figure}[t]
\begin{tabular}{llll}
\qc partition\\
$\scriptstyle \pi=(8,7,6,5,4,3,2,1)$ &$\scriptstyle
\pi=(9,7,6,5,4,3,2,1)$ &$\scriptstyle \pi=(9,8,6,5,4,3,2,1)$
&$\scriptstyle \pi=(9,8,7,5,4,3,2,1)$\\
\hline
\qs partition\\
$\scriptstyle \pi=(14,13,9)$ &$\scriptstyle \pi=(14,13,10)$
&$\scriptstyle \pi=(14,13,11)$
&$\scriptstyle \pi=(14,13,12)$\\
\hline
\includegraphics[width=1.4in]{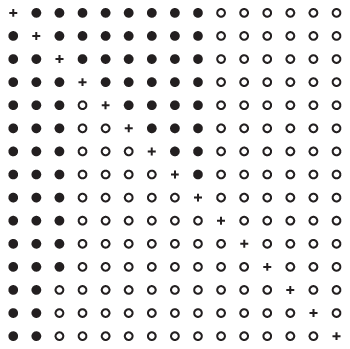}&
\includegraphics[width=1.4in]{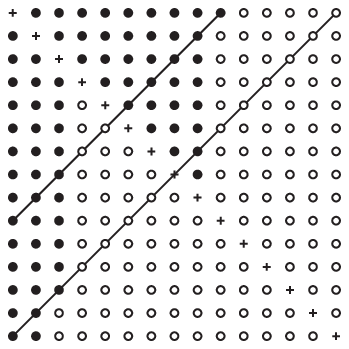}&
\includegraphics[width=1.4in]{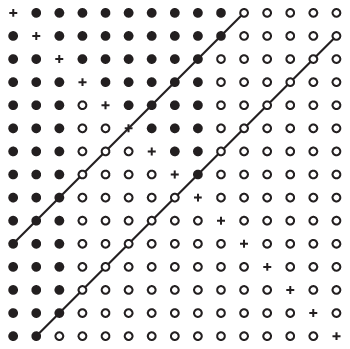}&
\includegraphics[width=1.4in]{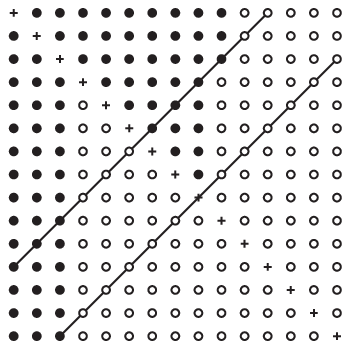}\\
\hline

$e=36$&$e=37$&$e=38$&$e=39$
\end{tabular}

\caption{Adjacency matrices for \qc and \qs graphs with $v=15$ and
$36 \leq e \leq 39$} \label{fig:V15E36789}
\end{figure}

As additional dots are added to the adjacency matrices for the \qc
graphs with $e=37,38,39$, the value of $C(15,e)$ increases by
$18,20,22$.  And the value of $S(15,e)$ increases by $28,30,32$.
Thus, the difference \emph{increases} by a constant amount of $10$.
Indeed, the diagonal lines are a distance of five apart.  Hence the
graph of $S(15,e)-C(15,e)$ for $36 \leq e \leq 39$ is linear with a
slope of $10$.  But for $e=40$, the adjacency matrix for the \qs
graph has an additional dot on the diagonal corresponding to $14$,
whereas the adjacency matrix for the \qc graph has an additional dot
on the diagonal corresponding to $24$.  So $S(15,40)-C(15,40)$
\emph{decreases} by $10$.  The decrease of $10$ continues until the
adjacency matrix for the \qc graph contains a complete column at
$e=45$.  Then the next matrix for $e=46$ has an additional dot in
the first row and next column and the slope changes again.

\section{Proof of Lemma \ref{lem:diagonal}} \label{sec:DiagonalLemma}

Returning for a moment to the threshold graph $\thr(\pi)$ from Figure \ref{fig:example643}, which  corresponds to the distinct partition $\pi=(6,4,3)$, we see  the graph complement  shown with the white dots.  Counting white dots in the rows from bottom to top and from the left to the diagonal, we have 7,5,2,1. These same numbers appear in columns reading from right to left and then top to the diagonal.  So if $\thr(\pi)$ is the threshold graph associated with $\pi$, then the set-wise complement of $\pi$ ($\pi^c$) in the set $\{1,2, \ldots, v-1\}$ corresponds to the threshold graph $\thr(\pi)^c$---the complement of $\thr(\pi)$.  That is,
\[
\thr(\pi^c)=\thr(\pi)^c.
\]
The diagonal sequence allows us to evaluate the sum of squares of the degree sequence of a threshold graph.  Each black dot contributes a certain amount to the sum of squares.  The amount depends on the location of the black dot in the adjacency matrix.  In fact all of the dots on a particular diagonal line contribute the same amount to the sum of squares.    For $v=8$, the value of a black dot in position $(i,j)$ is given by the entry in the following matrix:
\[
\left[ \begin{array}{cccccccc}
+&1&3&5&7&9&11&13\\
1&+&3&5&7&9&11&13\\
1&3&+&5&7&9&11&13\\
1&3&5&+&7&9&11&13\\
1&3&5&7&+&9&11&13\\
1&3&5&7&9&+&11&13\\
1&3&5&7&9&11&+&13\\
1&3&5&7&9&11&13&+\\
\end{array}
\right]
\]
This follows from the fact that a sum of consecutive odd integers is a square.  So to get the sum of squares $P_2(\thr(\pi))$ of the degrees of the threshold graph associated with the distinct partition $\pi$, sum the values in the numerical matrix above that occur in the positions with black dots.  Of course, an adjacency matrix is symmetric. So if we use only the black dots in the upper triangular part, then we must replace the $(i,j)$-entry in the upper-triangular part of the matrix above with the sum of the $(i,j)$- and the $(j,i)$-entry, which gives the following matrix:

\begin{equation} \label{eqn:evenmatrix}
E=
\left[
\begin{array}{cccccccc}
+&2&4&6&8&10&12&14\\
&+&6&8&10&12&14&16\\
&&+&10&12&14&16&18\\
&&&+&14&16&18&20\\
&&&&+&18&20&22\\
&&&&&+&22&24\\
&&&&&&+&26\\
&&&&&&&+
\end{array}
\right].
\end{equation}
Thus, $P_2(\thr(\pi))=2(1,2,3, \ldots ) \cdot \delta(\pi)$.  Lemma \ref{lem:diagonal} is proved.

\section{Proofs of Theorems \ref{thm:main1} and \ref{thm:main2}} \label{sec:proofsmain12}

Theorem \ref{thm:main1} is an immediate consequence of Theorem \ref{thm:main2} (and Lemmas \ref{lem:OptAreThres} and \ref{lem:diagonal}).  And Theorem \ref{thm:main2} can be proved using the following central lemma:

\begin{lemma} \label{lem:vminoneqc}
   Let $\pi=(v-1,c,c-1, \ldots , \widehat{j}, \ldots, 2,1)$ be an optimal partition in $\ds(v,e)$, where $e-(v-1)=1+2+ \cdots + c -j \geq 4$ and $1\leq j \leq c<v-2$.  Then $j=c$ and $2c \geq v-1$ so that
   \[
   \pi=(v-1, c-1, c-2, \ldots ,2,1).
   \]
\end{lemma}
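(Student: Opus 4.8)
The plan is to convert everything into the diagonal-sequence language of Lemma~\ref{lem:diagonal} and then reduce the optimality of $\pi$ to two explicit inequalities. The key computational tool is a \emph{peeling identity} for a dominant vertex: if $\sigma'\in\ds(v-1,f)$ and we prepend the part $v-1$ to form $\sigma=(v-1,\sigma')\in\ds(v,f+v-1)$, then prepending the dominant vertex shifts every dot of $\thr(\sigma')$ exactly two anti-diagonals higher, while the new top row places one dot on each of the anti-diagonals indexed $1,\dots,v-1$. By Lemma~\ref{lem:diagonal} this gives
\[
P_2(\thr(\sigma))=2\binom{v}{2}+4f+P_2(\thr(\sigma')).
\]
Applying this to $\pi=(v-1,\pi')$ with $\pi'=(c,c-1,\dots,\widehat{\,j\,},\dots,1)$ and $f=e-(v-1)=\binom{c+1}{2}-j$, and observing that $\pi'$ is precisely the quasi-complete partition $\pi(v-1,f,\qcc)$ (here the relevant index is $k=c$, since $\binom{c}{2}\le f<\binom{c+1}{2}$ and $c\le v-3$), formula~\eqref{eqn:Cve} produces the closed form
\[
P_2(\thr(\pi))=2\binom{v}{2}+4f+\bigl[\,j(c-1)^2+(c-j)c^2+(c-j)^2\,\bigr].
\]

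Since $\pi$ is optimal, Theorem~2 of \cite{AK} gives $P_2(\thr(\pi))=\max(v,e)\ge C(v,e)$ and $P_2(\thr(\pi))\ge S(v,e)$, and I would use these two inequalities to extract the two conclusions separately. To force $j=c$, I compare with the quasi-complete graph: writing $e=\binom{K+1}{2}-J$ and evaluating $C(v,e)$ by \eqref{eqn:Cve}, the claim is that $C(v,e)>P_2(\thr(\pi))$ whenever $1\le j\le c-1$, so optimality rules out every $j<c$. To force $2c\ge v-1$, I specialize to $j=c$ (so $\pi'=K_c$, $f=\binom{c}{2}$, and $P_2(\thr(\pi))=2\binom{v}{2}+4\binom{c}{2}+c(c-1)^2$) and compare with the quasi-star graph through \eqref{eqn:SCconnection}; here the claim is that $S(v,e)>P_2(\thr(\pi))$ whenever $2c<v-1$. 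Two small instances confirm the mechanism: for $(v,c,j)=(12,8,7)$ one finds $P_2(\thr(\pi))=656<660=C(12,40)$, and for $(v,c,j)=(11,4,4)$ one finds $P_2(\thr(\pi))=170<176=S(11,16)$.

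A single-box-move calculus gives a useful sanity check and partial result but explains why a more global comparison is needed. Moving the last box of the dominant part $v-1$ into the hole at $j$ changes $\sum_{\text{dots}}(i+j)$ by $2c-j-v+3$, whence optimality already yields $j\ge 2c-v+3$; however the competing quasi-complete and quasi-star graphs typically differ from $\pi$ by \emph{several} boxes (e.g.\ for $(12,8,7)$ the one-step change is $0$ while $C(v,e)$ beats $\pi$ by $4$), so a single move does not close the gap and one is forced into the closed-form comparison above.

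The main obstacle is the sign analysis in those two comparisons. The difficulty is that the quasi-complete parameters $K,J$ of $\G(v,e)$ (and similarly the quasi-star parameters) are piecewise-defined in $(v,c,j)$: as $e=(v-1)+\binom{c+1}{2}-j$ varies, $K$ jumps, so the inequality $C(v,e)>P_2(\thr(\pi))$ must be checked on each interval of $e$ on which $K$ is constant. I expect each case to collapse to a quadratic inequality in $j$ (respectively in $c$ and $v$) whose coefficients are controlled by the two hypotheses $c<v-2$ and $f=\binom{c+1}{2}-j\ge 4$; the latter is exactly what eliminates the degenerate small-$c$ configurations in which the proposed competitors fail to be legal members of $\ds(v,e)$. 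Verifying that these quadratics keep the asserted sign on every range, and treating the boundary $2c=v-1$ (where $S(v,e)=C(v,e)=P_2(\thr(\pi))$ and $\pi$ is genuinely optimal), is the crux of the argument.
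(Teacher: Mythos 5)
Your peeling identity $P_2(\thr(v-1,\sigma'))=2\binom{v}{2}+4f+P_2(\thr(\sigma'))$ and the resulting closed form for $P_2(\thr(\pi))$ are correct, but the case assignment on which the whole plan rests is wrong, and the crux is in any case left unverified. The specific claim that $C(v,e)>P_2(\thr(\pi))$ for every $1\le j\le c-1$ is false: take $(v,c,j)=(20,4,1)$, so $\pi=(19,4,3,2)$, $e=28=\binom{8}{2}$, and your formula gives $P_2(\thr(\pi))=380+36+66=482$, while $C(20,28)=8\cdot 7^2=392<482$. Here only the quasi-star beats $\pi$ (one computes $S(20,28)=506$), so non-optimality still holds, but not by the comparison you prescribe. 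Which of $S$ or $C$ dominates $\pi$ is governed by the relative size of $2c$ and $v-1$ at least as much as by whether $j<c$ (your own example $(12,8,7)$ has $2c>v-1$; the counterexample has $2c<v-1$), so the two comparisons cannot be divided along the line ``$j<c$ versus $j=c$''; you would need a finer case split, and the piecewise definition of the quasi-complete and quasi-star parameters $K,J$ -- which you correctly identify as the obstacle -- would then have to be resolved in each regime. Since you explicitly defer exactly that sign analysis as ``the crux of the argument,'' the proposal does not yet contain a proof of either implication.

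For contrast, the paper avoids the global comparison with $S(v,e)$ and $C(v,e)$ altogether. It splits into the cases $j=1$, $j=c$, and $1<j<c$, each subdivided by the sign of $2c-(v-1)$, and in every configuration where the conclusion should fail it exhibits an explicit competitor $\pi'\in\ds(v,e)$ -- generally neither the quasi-star nor the quasi-complete partition, e.g.\ $(v-1,2c-1,c-2,\ldots,3,2)$ or $(v-r,c,\ldots,\widehat{j+1-r},\ldots,2,1)$ with $r=\min(v-1-c,j)$ -- whose adjacency matrix differs from $\adj(\pi)$ in only a short list of positions; summing the zero-indexed coordinates of those positions shows $P_2(\pi')>P_2(\pi)$ directly. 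That local bookkeeping replaces the quadratic-inequality analysis you anticipate and sidesteps the jumping parameters entirely. Your single-box-move observation $j\ge 2c-v+3$ is a correct first step in this local direction but, as you note yourself, it is not sufficient to close the gap.
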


We defer the proof of Lemma \ref{lem:vminoneqc} until Section \ref{sec:proofvminoneqc} and proceed now with the proof of Theorem \ref{thm:main2}.
The proof of Theorem \ref{thm:main2} is an induction on $v$.  Let $\pi$ be an optimal partition in $\ds(v,e)$,  then $\pi^c$ is optimal in $\ds(v,e^\prime)$.  One of the partitions, $\pi, \pi^c$ contains the part $v-1$.  We may assume without loss of generality that $\pi=( v-1:\mu)$, where $\mu$ is a partition in $\ds(v-1,e-(v-1))$.    The cases where $\mu$ is a decreasing partition of $0,1,2,$ and $3$ will be considered later.  For now we shall assume that $e-(v-1) \geq 4$.

Since $\pi$ is optimal, it follows that $\mu$ is optimal and hence by the induction hypothesis, $\mu$ is one of the following partitions in $\ds(v-1,e-(v-1))$:

\begin{description}
\item[1.1a] $\mu_{1.1}=(v-2,\ldots ,k^\prime+1,j^\prime)$, the \qs partition for $e-(v-1)$,
\item[1.2a]  $\mu_{1.2}=(v-2,\ldots ,\widehat{2k^\prime-j^\prime-1},\ldots
,k^\prime-1)$, if $k^\prime+1 \leq 2k^\prime-j^\prime-1 \leq v-2$,
\item[1.3a]  $\mu_{1.3}=(v-2,\ldots ,k^\prime+1,2,1)$, if $j^\prime=3$,
\item [2.1a]  $\mu_{2.1}=(k_1,k_1-1, \ldots, \widehat{j_1} , \ldots,2,1)$,
the \qc partition for $e-(v-1)$,
\item [2.2a]  $\mu_{2.2}=(2k_1-j_1-1,k_1-2,k_1-3, \ldots 2,1)$, if $k_1+1 \leq 2k_1-j_1-1 \leq v-2$,
\item [2.3a]  $\mu_{2.3}=(k_1,k_1-1, \ldots, 3)$, if $j_1=3$,
\end{description}
where
\[
e-(v-1) = 1+2+ \cdots + k_1-j_1\geq 4, \text{ with } 1 \leq j_1 \leq k_1.
\]
In symbols, $\pi=(v-1,\mu_{i.j})$, for one of the partitions $\mu_{i.j}$ above.  For each partition, $\mu_{i.j}$, we will show that $(v-1,\mu_{i.j})=\pi_{s.t}$ for one of the six partitions, $\pi_{s.t}$, in the statement of Theorem \ref{thm:main2}.

The first three cases are obvious:
\begin{eqnarray*}
(v-1,\mu_{1.1}) &=& \pi_{1.1}\\
(v-1,\mu_{1.2}) &=& \pi_{1.2}\\
(v-1,\mu_{1.3}) &=& \pi_{1.3}.
\end{eqnarray*}

Next suppose that $\mu=\mu_{2.1}, \mu_{2.2}, \text{ or }\mu_{2.3}$.
The partitions $\mu_{2.2}$ and $\mu_{2.3}$ do not exist unless
certain conditions on $k_1,j_1$, and $v$ are met.  And whenever
those conditions are met, the partition $\mu_{2.1}$  is also
optimal.  Thus $\pi_1=(v-1,\mu_{2.1})$ is optimal. Also, since
$e-(v-1)\geq 4$, then $k_1\geq 3$. There are two cases: $k_1=v-2,
k_1\leq v-3$. If $k_1=v-2$, then $\mu_{2.2}$ does not exist and
\[
(v-1,\mu) = \left\{ \begin{array}{l@{\text{  if }}l}
    \pi_{2.1}, & \mu=\mu_{2.1} \\
    \pi_{1.1}, & \mu=\mu_{2.3}.
    \end{array} \right.
\]

If $k_1\leq v-3$, then
by Lemma \ref{lem:vminoneqc},  $\pi_1=(v-1, k_1-1, \ldots,2,1)$,
with $j_1=k_1$ and $2k_1 \geq v-1$.
We will show  that $k=k_1+1$ and $v-1=2k-j-1$.
The above inequalities imply that
\begin{eqnarray*}
\binom{k_1+1}{2}=1+ 2+ \cdots + k_1 & \leq & e\\
&=& \binom{k_1+1}{2} - k_1 + (v-1)\\
&<&\binom{k_1+1}{2} + (k_1+1)=\binom{k_1+2}{2}.
\end{eqnarray*}
But $k$ is the unique integer satisfying $\tbinom{k}{2} \leq e <
\tbinom{k+1}{2}$. Thus $k=k_1+1$.

It follows that
\begin{eqnarray*}
e= (v-1)+1+2+ \cdots +(k-2)=\binom{k+1}{2}-j,
\end{eqnarray*}
and so $2k-j=v$.

We now consider the cases 2.1a, 2.2a, and 2.3a individually.  Actually, $\mu_{2.2}$ does not exist since $k_1=j_1$.  If  $\mu=\mu_{2.3}$,  then $\mu=(3)$ since $k_1=j_1=3$.  This contradicts the assumption that $\mu$ is a partition of an integer greater than 3.  Therefore
\[
\mu=\mu_{2.1}=( k_1,k_1-1, \dots , \widehat{j_1}, \ldots ,2,1)=(k-2, k-3, \ldots 2,1),
\]
since $k_1=j_1$ and $k=k_1+1$.  Now since $2k-j-1=v-1$ we have
\[
\pi =(2k-j-1,k-2,k-3,\ldots 2,1)=\left\{
\begin{array}{l}
\pi _{2.1}\text{ if }e=\tbinom{v}{2}\text{ or }e=\tbinom{v}{2}-(v-2) \\
\pi _{2.2}\text{ otherwise.}%
\end{array}%
\right. .
\]

Finally, if $\mu$ is a decreasing partition  of $0,1,2,$ or $3$,
then either $\pi=(v-1,2,1)=\pi_{1.3}$, or $\pi=(v-1)=\pi_{1.1}$, or
$\pi=(v-1,j^\prime)=\pi_{1.1}$ for some $1\leq j^\prime \leq 3$.

Now, we prove that $\pi _{1.2}$ and $\pi _{1.3}$ (if
they exist) have the same diagonal sequence as $\pi _{1.1}$ (which always exists). This in turn implies
(by using the duality argument mentioned in Section \ref{sec:DiagonalLemma}) that
$\pi _{2.2}$ and $\pi _{2.3}$ also have the same diagonal
sequence as $\pi_{2.1}$ (which always exists). We use the following observation. If we index the rows and
columns of the adjacency matrix $\adj(\pi )$ starting at zero instead
of one, then two positions $(i,j)$ and $(i^{\prime },j^{\prime })$
are in the same diagonal if and only if the sum of their entries are
equal, that is, $i+j=i^{\prime }+j^{\prime }$. If $\pi_{1.2}$ exists
then $2k^\prime-j^\prime \leq v$. Applying the previous argument to
$\pi _{1.1}$ and $\pi _{1.2}$, we observe that the top row of the
following lists shows the positions where there is a black dot in
$\adj(\pi _{1.1})$ but not in $\adj(\pi _{1.2})$
and the bottom row shows the positions where there is a black dot in $%
\adj(\pi _{1.2})$ but not in $\adj(\pi _{1.1})$.%
\[
\begin{tabular}{ccccc}
    $(v-k^{\prime }-2,v-1)$ & $\ldots$ & $(v-k^{\prime }-t,v-1)$ & $\ldots$ & $(v-k^{\prime }-(k^{\prime}-j^{\prime }),v-1)$
  \\$(v-1-k^{\prime },v-2)$ & $\ldots$ & $(v-1-k^{\prime },v-t)$ & $\ldots$ & $(v-1-k^{\prime },v-(k^{\prime}-j^{\prime })).$
\end{tabular}
\]
Each position in the top row is in the same diagonal as the corresponding
position in the second row. Thus the number of positions per diagonal is the
same in $\pi _{1.1}$ as in $\pi _{1.2}$. That is, $\delta \left( \pi
_{1.1}\right) =\delta \left( \pi _{1.2}\right) $.

Similarly, if $\pi_{1.3}$ exists then $k^\prime \geq j^\prime=3$. To
show that $\delta \left( \pi _{1.1}\right) =\delta \left( \pi
_{1.3}\right) $ note that the only position where there is a black
dot in $ \adj(\pi _{1.1})$ but not in $\adj(\pi _{1.3})$ is
$(v-1-k^{\prime },v-1-k^{\prime }+3)$, and the only position where
there is a black dot in $ \adj(\pi _{1.3})$ but not in $\adj(\pi
_{1.1})$ is $(v-k^{\prime },v-1-k^{\prime }+2)$. Since these
positions are in the same diagonal then $ \delta \left( \pi
_{1.1}\right) =\delta \left( \pi _{1.3}\right) $.

Theorem \ref{thm:main2} is proved.

\section{Proof of Lemma \ref{lem:vminoneqc}} \label{sec:proofvminoneqc}

There is a variation of the formula for $P_2(\thr(\pi))$ in Lemma \ref{lem:diagonal} that is useful in the proof of Lemma \ref{lem:vminoneqc}.   We have seen that each black dot in the adjacency matrix for a threshold graph contributes a summand, depending on the location of the black dot in the matrix $E$ in (\ref{eqn:evenmatrix}).  For example,  if $\pi=(3,1)$, then the part of $(1/2)E$  that corresponds to the black dots in the adjacency matrix  $\adj(\pi)$ for $\pi$ is
\[
\adj((3,1))=\begin{bmatrix}

+&\D&\D&\D\\
  &+&\D&\ci\\
&&+&\ci\\
 &&&+
 \end{bmatrix}, \qquad
\begin{bmatrix}

+&1&2&3\\
  &+&3&\\
&&+&\\
 &&&+
 \end{bmatrix}
\]
Thus $P_2(\thr(\pi))=2(1+2+3+3)=18$.  Now if we index the rows and columns of the adjacency matrix starting with zero instead of one, then the integer appearing in the matrix $(1/2)E$ at entry $(i,j)$ is just $i+j$.  So we can compute $P_2(\thr(\pi))$ by adding all of the positions $(i,j)$ corresponding to the positions of black dots in the upper-triangular part of the adjacency matrix of $\thr(\pi)$.  What are the positions of the black dots in the adjacency matrix for the threshold graph corresponding to a partition $\pi=(a_0,a_1, \ldots, a_p)$?  The positions corresponding to $a_0$ are
\[
(0,1), (0,2), \ldots, (0,a_0)
\]
and the positions corresponding to $a_1$ are
\[
(1,2), (1,3) \ldots, (1,1+a_1).
\]
In general, the positions corresponding to $a_t$ in $\pi$ are
\[
(t,t+1), (t,t+2), \ldots, (t,t+a_t).
\]
We use these facts  in the proof of  Lemma \ref{lem:vminoneqc}.

Let $\mu=(c,c-1, \ldots, \widehat{j}, \ldots ,2,1)$ be the \qc partition in $\ds(v,e-(v-1))$, where $1 \leq j \leq c <v-2$ and $1+2+ \ldots +c-j\geq4$.
We deal with the cases $j=1$, $j=c$, and $2 \leq j \leq c-1$ separately.  Specifically, we show that  if $\pi = (v-1:\mu)$ is  optimal, then $j=c$ and
\begin{equation} \label{eqn:optform2}
\pi=(v-1,c-1, \ldots,2,1),
\end{equation}
with  $2c \geq v-1$.

Arguments for the cases are given below.

\subsection{$j=1$ : $\mu=(c, c-1, \ldots, 3,2)$}
Since $2+3+\ldots+c \geq 4$ then $c\geq3$. We show that $\pi=(v-1:\mu)$ is not optimal.
In this case, the adjacency matrix for $\pi$ has the following  form:
\setlength{\arraycolsep}{3pt}
{
\renewcommand{\arraystretch}{1.0}
\[
\begin{array}{c|cccccccccc}
    &  0&   1&    2& \cdots &   &c&&&\cdots&  v-1\\ \hline
0  &  +&  \D&  \D&\cdots& & \D&  \D&  \D&\cdots&  \D \\
1  &    &   +&  \D&  \cdots   &    &  \D&  \D&  \ci&   \cdots      &  \ci\\
2 & & & +\\
\vdots&&&&\ddots \\
c-1    &    &     &   &   &     +    &  \bk &  \D& \ci & \cdots &\ci\\
c  &    &     &     &    &     &   +&  \ci& \ci & \cdots & \ci\\
c+1&  &     &     &     &    &     & + & \ci & \cdots & \ci\\
\vdots &&&& &&&&\ddots&&\vdots\\
&&&&&&&&&&\ci\\
v-1 & &&&&&&&&&+
\end{array}
\]
}


\subsubsection{$2c \leq v-1$}
 Let
\[
\pi^\prime=(v-1,2c-1,c-2,c-3, \ldots, 3,2).
\]
The parts of $\pi^\prime$ are distinct and decreasing since $2c \leq v-1$.  Thus $\pi^\prime \in \ds(v,e)$.

The adjacency matrices  $\adj(\pi)$ and  $\adj(\pi^\prime)$ each
have $e$  black  dots,  many of which appear in the same positions.
But there are differences.  Using the fact that $c-1 \geq 2$, the
first row of the following list shows the positions in which a black
dot appears in $\adj(\pi)$ but not in $\adj(\pi^\prime)$.  And the
second row shows the positions in which a black dot appears in
$\adj(\pi^\prime)$ but not in $\adj(\pi)$:
\[
\begin{array}{cccccc}
(2,c+1)&(3,c+1)& \cdots & (c-1,c+1)&&(c-1,c)\\
(1,c+2)&(1,c+3)& \cdots & (1,2c-1)&&(1,2c)
\end{array}
\]
For each of the positions in the list, except the last ones, the sum of the coordinates for the positions is the same in the first row as it is in the second row.  But the coordinates of the last pair in the first row sum to $2c-1$ whereas the coordinates of the last pair in the second row sum to $2c+1$.  It follows that $P_2(\pi^\prime)=P_2(\pi)+4$.  Thus, $\pi$ is not optimal.

%



\subsubsection{$2c > v-1$}

Let $\pi^\prime=(v-2,c,c-1, \ldots,3,2,1)$.  Since $c<v-2$, the partition $\pi^\prime$ is in $\ds(v,e)$.  The positions of the black dots in the adjacency matrices $\adj(\pi)$ and $\adj(\pi^\prime)$ are the same but with only two exceptions.  There is a black dot in position $(0,v-1)$ in $\pi$ but not in $\pi^\prime$, and there is a black dot in position $(c,c+1)$ in $\pi^\prime$ but not in $\pi$.  Since $c + (c+1) > 0 +(v-1)$, $\pi$ is not optimal.

\subsection{$j=c: \mu=(c-1, \ldots,2, 1)$}
Since $1+2+ \cdots +(c-1) \geq 4$, then $c\geq 4$. We will show that
if $2c \geq v-1$, then $\pi$ has the same diagonal sequence as the
\qc partition. And if $2c<v-1$, then $\pi$ is not optimal.

The adjacency matrix for $\pi$ is of the following form:


\renewcommand{\arraystretch}{1.3}
\[
\begin{array}{c|cccccccc}
    &  0&   1&    2&\cdots &   c&&\cdots&  v-1\\ \hline
0  &  +&  \D&  \D&\cdots&  \D&  \D&  \cdots&  \D\\
1  &    &   +&  \D&     &    \D&  \ci&          &  \ci\\
\vdots& &    & \ddots&  &\\
    &    &     &   &      +    &  \D&  \ci&  \cdots &\ci\\
c  &    &     &     &        &   +&  \ci&  \cdots & \ci\\
&  &     &     &     &         & + & \cdots & \ci\\
&&&&&&&\ddots\\
v-1&&&&&&&&+
\end{array}
\]
\subsubsection{$2c \geq v-1$}
The \qc partition in $\G(v,e)$ is $\pi^\prime=(c+1,c, \ldots , \widehat{k}, \ldots, 2,1)$, where $k=2c-v+2$.  To see this, notice that
\[
1+2+ \cdots + c+(c+1) -k=1+2+ \cdots + (c-1) + (v-1)
\]
for $k=2c-v+2$.  Since $2c \geq v-1$ and $c<v-2$, then $1 \leq k<c$ and $\pi^\prime \in \ds(v,e)$.

To see that $\pi$ and $\pi^\prime$ have the same diagonal sequence,
we again make a list of the positions in which there is a black dot
in $\adj(\pi)$ but not in $\adj(\pi^\prime)$ (the top row below),
and the positions in which there is a black dot in
$\adj(\pi^\prime)$ but not in $\adj(\pi)$ (the bottom row below):

\[
\begin{array}{cccccc}
(0,c+2)&(0,c+3)& \cdots &(0,c+t+1)& \cdots &(0,v-1)\\
(1,c+1)&(2,c+1)& \cdots&(t,c+1)& \cdots &(v-c-2,c+1).
\end{array}
\]
Each position in the top row is in the same diagonal as the
corresponding position in the bottom row, that is,
$0+(c+t+1)=t+(c+1)$.  Thus the diagonal sequences
$\delta(\pi)=\delta(\pi^\prime)$.

\subsubsection{$2c < v-1$}
In this case, let $\pi^\prime=(v-1,2c-2,c-3, \ldots, 3,2)$.  And
since $2c-2 \leq v-3$, the parts of $\pi^\prime$ are distinct and
decreasing.  That is, $\pi^\prime \in \ds(v,e)$.

Using the fact that $c-2 \geq 2$, we again list the positions in
which there is a black dot in $\adj(\pi)$ but not in
$\adj(\pi^\prime)$ (the top row below), and the positions in which
the is a black dot in $\adj(\pi^\prime)$ but not in $\adj(\pi)$:
\[
\begin{array}{ccccc}
(2,c)&(3,c)& \cdots&(c-1,c)&(c-2,c-1)\\
(1,c+1)&(1,c+2)&\ldots&(1,2c-2)&(1,2c-1).
\end{array}
\]
All of the positions but the last in the top row are on the same diagonal as the corresponding position in the bottom row: $t+c=1+(c-1+t)$.  But in the last positions we have $(c-2)+(c-1)=2c-3$ and $1+(2c-1)=2c$.  Thus $P_2(\pi^\prime)=P_2(\pi)+6$ and so $\pi$ is not optimal.

\subsection{$1<j<c: \mu=(c,c-1, \ldots, \widehat{j}, \ldots, 2,1)$}

We will show that $\pi=(v-1,c,c-1, \ldots, \widehat{j}, \ldots, 2,1)$ is not optimal.  The adjacency matrix for $\pi$ has the following form:
\renewcommand{\arraystretch}{1}
\[
\begin{array}{l|cccccccccc}
    &  0&   1&    2& \qquad \qquad \cdots \qquad \qquad &   \sd{c-1}&\sd{c}&\sd{c+1}&\sd{c+2}&\cdots&  \sd{v-1}\\ \hline
0  &  +&  \D&  \D&\cdots& & \D&  \D&  \D&\cdots&  \D\\
1  &    &   +&  \D&     &    &  \D&  \D&  \ci&         &  \ci\\
\vdots \\
c-j  &    &     &     &     &    &  \D&  \D&  \ci& \cdots&\ci\\
c-j+1&    &     &    &  \ddots   &   &  \D&  \ci&  \ci& \cdots&\ci\\
\vdots& &     &    &   &\\
c-1    &    &     &   &   &     +    &  \D&  \ci& \ci & \cdots &\ci\\
c  &    &     &     &    &     &   +&  \ci& \ci & \cdots & \ci\\
c+1&  &     &     &     &    &     & + & \ci & \cdots & \ci\\
\vdots&&&&&&&&+\\
&&&&&&&&&\ddots\\
v-1&&&&&&&&&&+
\end{array}
\]
There are two cases.

\subsubsection{$2c > v-1$}

Let $\pi^\prime=(v-r,c,c-1,\ldots ,\widehat{j+1-r},\ldots ,2,1)$, where $r=\min (v-1-c,j)$. Then $r>1$ because $j>1$ and $c<v-2$. We show that $\pi^\prime \in \ds(v,e)$ and $P_2(\pi^\prime)
>P_2(\pi)$.

In order for $\pi^\prime$ to be in $\ds(v,e)$, the sum of the parts in $\pi^\prime$ must equal the sum of the parts in $\pi$:
\[
1+2+\ldots +c+(v-r) -(j+1-r)=1+2+\ldots +c+(v-1)-j.
\]
And the parts of $\pi^\prime$ must be distinct and decreasing:
\[
v-r>c>j+1-r>1.
\]
The first inequality holds because $v-1-c\geq r$. The last two
inequalities hold because $c>j>r>1$. Thus $\pi^\prime \in \ds(v,e)$.

The top row below lists the positions where there is a black dot in $\adj(\pi)$ but not in $\adj(\pi^\prime)$; the bottom row lists the positions where there is a black dot in $\adj(\pi^\prime)$ but not in $\adj(\pi)$:
\setlength{\arraycolsep}{7pt}
\[
\begin{array}{lclcl}
(0,v-1)&\cdots&(0,v-t)&\cdots&(0,v-r+1)\\
(c-j+r-1,c+1)&\cdots&(c-j+r-t,c+1)& \cdots &(c-j+1,c+1).
\end{array}
\]
Since $r>1$ then the lists above are non-empty. Thus, to ensure that
$P_2(\pi^\prime)
> P_2(\pi)$, it is sufficient to show that for each $1\leq t\leq
r-1$, position $(0,v-t)$ is in a diagonal to the left of position
$(c-j+r-t,c+1)$. That is,
\[
0< [(c-j+r+1-t)+(c+1)]-[0+(v-t)]=2c+r-v-j,
\]
or equivalently,
\[
v-2c+j-1 \leq r=\min (v-1-c,j).
\]
The inequality $v-2c+j\leq v-1-c$ holds because $j<c$, and $v-2c+j\leq j$ holds because $v-1<2c$. It follows that $\pi$ is not an optimal partition.

\subsubsection{$2c \leq v-1$}

Again we show that $\pi=(v-1,c,c-1, \cdots, \widehat{j}, \cdots, 2,1)$ is not optimal.  Let
\[
\pi^\prime = (v-1,2c-2,c-2, \cdots, \widehat{j-1}, \cdots, 2,1).
\]
 The sum of the parts in $\pi$ equals the sum of the parts in $\pi^\prime$.  And the partition  $\pi^\prime$ is  decreasing:
\[
1 \leq j-1 \leq  c-2< 2c-2 <v-1.
\]
The first three inequalities follow from the assumption that $1<j <c$.  And the fourth inequality holds because $2c \leq v-1$.  So $\pi^\prime \in \ds(v,e)$.

The adjacency matrices $\adj(\pi)$ and $\adj(\pi^\prime)$ differ as follows.  The top rows of the following two lists contain the positions where there is a black dot in $\adj(\pi)$ but not in $\adj(\pi^\prime)$; the bottom row lists the positions where there is a black dot in $\adj(\pi^\prime)$ but not in $\adj(\pi)$.
\setlength{\arraycolsep}{7pt}
\[
\begin{array}{llclcl}
\text{List 1}&(2,c+1)& \cdots& (t,c+1) & \cdots &(c-j,c+1)\\
&(1,c+2)& \cdots & (1,c+t)& \cdots & (1,2c-j)\\
\\
\text{List 2}&(c-j+1,c)& \cdots& (c-j+t,c) & \cdots &(c-1,c)\\
&(1,2c-j+1)& \cdots & (1,2c-j+t)& \cdots & (1,2c-1).
\end{array}
\]
Each position, $(t,c+1)$  ($t=2, \ldots, c-j$), in the top row in List 1 is in the same diagonal as the corresponding position, $(1,c+t)$, in the bottom row of List 1. Each position, $(c-j+t,c)$ ($t=1, \ldots, j-1$), in the top row of List 2 is in a diagonal to the left of the corresponding position, $(1,2c-j+t)$ in the bottom row of List 2.  Indeed, $(c-j+t)+c = 2c-j+t < 2c-j+t+1 = 1+(2c-j+t)$.  And since $1<j$, List 2 is not empty.  It follows that $P_2(\pi^\prime)>P_2(\pi)$ and so $\pi$ is not a optimal partition.

The proof of Lemma \ref{lem:vminoneqc} is complete.

\newpage
\section{Proof of Theorem \ref{thm:main3} and Corollaries \ref{cor:uniform} and \ref{cor:strictuniform}} \label{sec:main3}

The notation in this section changes a little from that used in Section \ref{sec:intro}.  In Section  \ref{sec:intro}, we write $e=\binom{k+1}{2}-j,$ with $1 \leq j \leq k$.  Here, we let $t=k-j$ so that
\begin{equation} \label{eqn:repre}
e=\binom{k}{2}+t,
\end{equation}
with $0 \leq t \leq k-1$.  Then Equation (\ref{eqn:Cve})  is equivalent to
\begin{equation} \label{eqn:Ckt}
C(v,e) = C(k,t) = (k-t)(k-1)^2+tk^2+t^2=k(k-1)^2+t^2+t(2k-1).
\end{equation}
Before proceeding, we should say that the abuse of notation in $C(v,e)=C(k,t)$ should not cause confusion as it will be clear which set of parameters $(v,e)$ {\it vs.} $(k,t)$ are being used.  Also notice that if we were to expand the range of $t$ to $0 \leq t \leq k$, that is allow $t=k$, then the representation of $e$ in Equation (\ref{eqn:repre}) is not unique:
\begin{eqnarray*}
e &=& \binom{k}{2}+k = \binom{k+1}{2}+0.
\end{eqnarray*}
But the value of $C(v,e)$ is the same in either case:
\[
C(k,k)=C(k+1,0)= (k+1)k^2.
\]
Thus we may take $0 \leq t \leq k$.

We begin the proofs now. At the beginning of Section
\ref{sec:QSvsQC}, we showed that $S(v,e)=C(v,e)$ for $e=0,1,2,3$. Also note that, when $m$ is an integer, $\DF(v,m)=0$. We
now compare $S(v,e)$ with $C(v,e)$ for $4 \leq e < m$.  The first
task is to show that $S(v,e) > C(v,e)$ for all but a few values of
$e$ that are close to $m$.  We start by finding upper and lower
bounds on $S(v,e)$ and $C(v,e)$.

Define
\begin{eqnarray*}
 U(e) & = & e(\sqrt{8e+1}-1)\mbox{ and} \\
 U(k,t) & = & \left(\binom{k}{2}+t\right)\left(\sqrt{(2k-1)^2+8t}-1\right).
\end{eqnarray*}

The first lemma shows that $U(e)$ is an upper bound for $C(v,e)$ and
leads to an upper bound for $S(v,e)$.  Arguments used here to obtain upper and lower bounds are similar to those in \cite{Ni}.
\begin{lemma} \label{lem:upperbS}
     For $e \geq 2$
     \begin{eqnarray*}
            C(v,e) & \leq & U(e) \mbox{ and } \\
            S(v,e) & \leq & U(e') + (v-1)(4e-v(v-1)).
     \end{eqnarray*}
\end{lemma}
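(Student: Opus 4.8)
The plan is to prove the first inequality $C(v,e) \le U(e)$ directly from the formula \eqref{eqn:Ckt}, and then derive the second inequality as a formal consequence using the complementation identity \eqref{eqn:SCconnection}. First I would work with the $(k,t)$-parametrization, writing $e = \binom{k}{2}+t$ with $0 \le t \le k$ and $C(v,e)=C(k,t)=k(k-1)^2+t^2+t(2k-1)$. The key observation is that $C$ does not actually depend on $v$, so it suffices to bound the expression $C(k,t)$ by $U(k,t)=\left(\binom{k}{2}+t\right)\left(\sqrt{(2k-1)^2+8t}-1\right)$; the point of the definition of $U(k,t)$ is precisely that $U(e)$ evaluated at $e=\binom{k}{2}+t$ equals $U(k,t)$, since $8e+1 = 8\binom{k}{2}+8t+1 = (2k-1)^2+8t$. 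So the whole problem reduces to a single-variable-style inequality in $k$ and $t$.

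For that reduction, I would show $C(k,t) \le U(k,t)$ by isolating the square root. Since both sides are built to be compared after adding $e$ to $C$, I expect the cleanest route is to prove the equivalent statement
\[
\frac{C(k,t)}{e} + 1 \le \sqrt{(2k-1)^2+8t},
\]
i.e. that $\big(C(k,t)+e\big)/e \le \sqrt{8e+1}$, where $e=\binom{k}{2}+t$. Because both sides are nonnegative for $e \ge 2$, this is equivalent to the polynomial inequality $\big(C(k,t)+e\big)^2 \le (8e+1)e^2$, which after substituting the explicit expressions becomes a polynomial inequality in the integers $k$ and $t$ with $0 \le t \le k$. I would verify this by treating it as a polynomial in $t$ (the degree in $t$ is small) and checking that the difference of the two sides is nonnegative on the range $0 \le t \le k$, handling the endpoints $t=0$ and $t=k$ and using $k \ge 2$ where needed. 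This is the routine-calculation step and I would not grind through it here, but I expect the inequality to be tight (equality) exactly at the "round" values where $\QC$ is a disjoint union of a clique and isolated vertices.

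With $C(v,e) \le U(e)$ established for all valid $(v,e)$, the bound on $S(v,e)$ follows immediately from \eqref{eqn:SCconnection}: since $S(v,e) = C(v,e') + (v-1)(4e - v(v-1))$ with $e' = \binom{v}{2}-e$, and since the just-proved inequality applied to $e'$ gives $C(v,e') \le U(e')$, substitution yields
\[
S(v,e) \le U(e') + (v-1)(4e - v(v-1)).
\]
The only caveat is the hypothesis $e \ge 2$: the bound $C(v,e') \le U(e')$ requires $e' \ge 2$, so strictly speaking one must check the few boundary cases where $e'$ is small separately, or simply note that the statement as used later only concerns the regime $4 \le e < m$ where both $e$ and $e'$ are comfortably large. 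The main obstacle, then, is purely the algebraic verification of the core polynomial inequality $C(k,t) \le U(k,t)$; once that is in hand, the second inequality is essentially free from the complementation symmetry already recorded in \eqref{eqn:SCconnection}.
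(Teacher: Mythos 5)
Your proposal follows essentially the same route as the paper: reduce to the single inequality $C(k,t)\le U(k,t)$ on each interval $\binom{k}{2}\le e\le \binom{k+1}{2}$, eliminate the square root (the paper substitutes $x=\sqrt{(2k-1)^2+8t}$ where you would square, but these are equivalent algebraic maneuvers), and obtain the bound on $S$ for free from Equation (\ref{eqn:SCconnection}). The polynomial verification you defer does go through: the paper's substitution yields the explicit factorization $U-C=\tfrac{1}{64}\,(x-(2k-1))((2k+1)-x)\,(x^2+4(k-2)(k+x)-1)$, whose three factors are visibly nonnegative for $k\ge 2$ and $2k-1\le x\le 2k+1$, and which in particular confirms your prediction that equality holds exactly at $t=0$ and $t=k$.
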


It is clearly enough to prove the first inequality. The second
one is trivially obtained from Equation (\ref{eqn:SCconnection})
 linking the values of $S(v,e)$ and $C(v,e)$.

\begin{proof}
     We prove the inequality in each interval $\binom{k}{2} \leq e \leq \binom{k+1}{2}$. So fix $k \geq 2$ for now. We make yet another change of variables to get rid off the square root in the above expression of $U(k,t)$.

     Set $t(x) = (x^2-(2k-1)^2)/8$, for $2k-1 \leq x \leq 2k+1$. Then
      \begin{eqnarray*}
         \lefteqn{U(k,t(x)) - C(k,t(x))  = } \\ && \frac{1}{64} (x- (2 k -1)) ((2 k +1) - x) \left( x^2 + 4 (k-2) (k+x)-1 \right),
      \end{eqnarray*}
which is easily seen to be positive for all $k \geq 2$ and all
$2k-1 \leq x \leq 2k+1$.
\end{proof}

Now define
\begin{eqnarray*}
       L(e) & = & e(\sqrt{8e+1}-1.5) \mbox{ and} \\
       L(k,t) & = & \left(\binom{k}{2}+t\right)\left(\sqrt{(2k-1)^2+8t}-1.5\right).
\end{eqnarray*}

The next lemma shows that $L(e)$ is a lower bound for $C(v,e)$ and
leads to a lower bound for $S(v,e)$.
\begin{lemma} \label{lem:lowerbS}
     For $e \geq 3$
     \begin{eqnarray*}
            C(v,e) & \geq & L(e) \mbox{ and } \\
            S(v,e) & \geq & L(e') +(v-1)(4e-v(v-1)).
     \end{eqnarray*}
\end{lemma}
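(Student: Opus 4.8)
The plan is to follow the template of Lemma~\ref{lem:upperbS} verbatim: it suffices to prove the first inequality $C(v,e) \geq L(e)$, since the second then follows immediately from Equation~(\ref{eqn:SCconnection}) linking $S$ and $C$, exactly as was noted after the statement of the upper-bound lemma. So I fix $k \geq 2$ and prove $C(v,e) \geq L(e)$ on each interval $\binom{k}{2} \leq e \leq \binom{k+1}{2}$, reusing the same change of variables $t = t(x) = (x^2 - (2k-1)^2)/8$ with $2k-1 \leq x \leq 2k+1$.

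The main idea is that this substitution renders both $L$ and $U$ completely transparent, so that the lower bound can be read off from the factorization already obtained for the upper bound. Since $\binom{k}{2} + t(x) = (x^2-1)/8$ and $\sqrt{(2k-1)^2 + 8t(x)} = x$, we get
\[
L(k,t(x)) = \frac{x^2-1}{8}\left(x - \tfrac{3}{2}\right), \qquad U(k,t(x)) = \frac{x^2-1}{8}\left(x-1\right),
\]
and hence $U(k,t(x)) - L(k,t(x)) = (x^2-1)/16$. Writing $C - L = (U - L) - (U - C)$ and inserting the factorization
\[
U(k,t(x)) - C(k,t(x)) = \frac{1}{64}\bigl(x-(2k-1)\bigr)\bigl((2k+1)-x\bigr)\,Q(x), \quad Q(x):=x^2+4(k-2)(k+x)-1,
\]
computed in the proof of Lemma~\ref{lem:upperbS}, I obtain
\[
64\bigl(C(k,t(x)) - L(k,t(x))\bigr) = 4(x^2-1) - \bigl(x-(2k-1)\bigr)\bigl((2k+1)-x\bigr)\,Q(x).
\]

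It remains to bound the subtracted product on $[2k-1,2k+1]$. Setting $x = (2k-1)+u$ with $u \in [0,2]$ gives $\bigl(x-(2k-1)\bigr)\bigl((2k+1)-x\bigr) = u(2-u) \leq 1$. A one-line check shows $Q'(x) = 2(x+2k-4) > 0$ on the interval, so $Q$ is increasing there, with $Q(2k-1) = 8(2k^2-4k+1) \geq 0$ and $Q(2k+1) = 16k^2-16k-8$; thus $0 \leq Q(x) \leq 16k^2-16k-8$ throughout, and the product (being a product of nonnegative factors) is at most $16k^2-16k-8$. Since also $4(x^2-1) \geq 4\bigl((2k-1)^2-1\bigr) = 16k^2-16k$, we conclude
\[
64\bigl(C - L\bigr) \geq (16k^2-16k) - (16k^2-16k-8) = 8 > 0,
\]
which gives $C(v,e) \geq L(e)$ with room to spare.

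The computations are elementary, so I do not expect a genuine obstacle; the only points needing care are verifying that $Q$ is nonnegative and increasing on $[2k-1,2k+1]$ (this is exactly where $k \geq 2$, guaranteed by $e \geq 3$, is used) and checking that the maximum of $Q$ at the right endpoint is dominated by the value of $4(x^2-1)$ at the left endpoint. As consistency tests for the algebra I would confirm the endpoint values $C-L = \tfrac14 k(k-1)$ at $t=0$ and $C-L = \tfrac14 k(k+1)$ at $t=k$, which must agree because the point $t=k$ for parameter $k$ is the point $t=0$ for parameter $k+1$.
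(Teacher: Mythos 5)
Your proof is correct; I checked the algebra ($U(k,t(x))-L(k,t(x))=(x^2-1)/16$, the values of $Q$ at the endpoints, and the final bound $64(C-L)\geq 8$) and it all holds. The setup is the same as the paper's: reduce to the first inequality via Equation~(\ref{eqn:SCconnection}), work interval by interval in $k$, and use the substitution $t(x)=(x^2-(2k-1)^2)/8$. Where you diverge is in how positivity is certified. The paper sets $x=2k+b$ and exhibits $C-L$ directly as a sum of two squares plus the constant $\frac{4k(22k-49)+13}{64(4k-7)}$, declared ``easily seen to be positive for $k\geq 3$''; you instead write $C-L=(U-L)-(U-C)$, recycle the factorization of $U-C$ already computed in the proof of Lemma~\ref{lem:upperbS}, and bound the two nonnegative factors separately by their maxima. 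Your route buys three things: every step is a one-line hand check rather than a claimed polynomial identity; the argument works for $k\geq 2$, so the case $e=3$ is covered without having to invoke the identification $C(k,k)=C(k+1,0)$ to push it into the $k=3$ interval (the paper's constant term is negative at $k=2$, which is why it needs $k\geq 3$); and the endpoint consistency check $C-L=\tfrac14 k(k-1)$ at $t=0$ is a useful sanity test the paper omits. The paper's decomposition, once verified, gives the positivity in a single glance, but yours is the more auditable of the two.
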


\begin{proof}
     As above, set $t(x) = (x^2-(2k-1)^2)/8$, $2k-1 \leq x \leq
     2k+1$, and $x(k,b)=2k+b$, $-1 \leq b \leq 1$. Then
      \begin{eqnarray*}
         \lefteqn{C(k,t(x(k,b))) - L(k,t(x(k,b))) =} \\
         &  & \frac{1}{64} b^2 (b+4 k-4)^2 +
         \frac{1}{32} (4 k-7) \left(b+\frac{2 (k+1)}{4 k-7}\right)^2+\frac{4 k (22 k-49)+13}{64 (4 k-7)}
      \end{eqnarray*}
This expression is easily seen to be positive for $k \geq 3$.
\end{proof}

We are now ready to prove that $S(v,e) > C(v,e)$ for $0 \leq e
\leq m$ for all but a few small values and some values close to
$m$.

\begin{lemma} \label{lemma:4<e<v}
      Assume $v \geq 5$. For $4 \leq e < v$ we have $C(v,e)<S(v,e)$.
\end{lemma}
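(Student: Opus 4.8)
The plan is to compute both $S(v,e)$ and $C(v,e)$ in closed form for this sparse regime and compare them directly. First I would pin down the quasi-star graph. For $4\le e\le v-1$ the complementary edge count $e^\prime=\binom{v}{2}-e$ satisfies $\binom{v-1}{2}\le e^\prime<\binom{v}{2}$, so in the quasi-star description $e=\binom{v}{2}-\binom{k^\prime+1}{2}+j^\prime$ one gets $k^\prime=v-1$ and $j^\prime=e$; the decreasing run $v-1,v-2,\dots,k^\prime+1$ is then empty and the quasi-star partition collapses to the single part $\pi(v,e,\qss)=(e)$. Thus $\QS(v,e)$ is simply the star $K_{1,e}$ together with $v-1-e$ isolated vertices, and either by a direct degree count or by Lemma \ref{lem:diagonal} (its diagonal sequence is $(1,1,\dots,1)$ of length $e$),
\[
S(v,e)=e^2+e=e(e+1).
\]
This is the only place the hypothesis $e<v$ is used, and it is essential: once $e\ge v$ the quasi-star acquires a genuine dominant vertex of degree $v-1$ and this formula fails.

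Next I would write $e=\binom{k}{2}+t$ with $0\le t\le k-1$, so that $C(v,e)=C(k,t)=k(k-1)^2+t^2+t(2k-1)$ by (\ref{eqn:Ckt}); here $e\ge 4$ forces $k\ge 3$, and $k=3$ forces $t\ge 1$. For $v\ge5$, the bound $\binom{k}{2}\le e\le v-1$ forces $k\le v-2$, so the underlying clique $K_k$ genuinely fits inside $v$ vertices and the quasi-complete formula applies. Substituting $e=\tfrac12 k(k-1)+t$ into $e(e+1)$, the $t^2$ terms cancel against those in $C(k,t)$, and collecting the remaining terms (the coefficient of $t$ is $(k-1)(k-2)$ and the constant term is $\tfrac14 k(k-1)(k-2)(k-3)$) the difference factors as
\[
S(v,e)-C(v,e)=(k-1)(k-2)\left(\frac{k(k-3)}{4}+t\right).
\]

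Finally, positivity is immediate from this factored form, and it is exactly here that the threshold $e\ge4$ enters. For $k\ge3$ the prefactor $(k-1)(k-2)$ is positive; the bracket equals $t$ when $k=3$ (and $t\ge1$ there, since $e\ge4$), and is strictly positive for every $k\ge4$ because $k(k-3)>0$. Hence $S(v,e)-C(v,e)>0$ throughout the range $4\le e<v$, which is the claim. I do not expect a genuine obstacle: the real content is recognizing that in this regime the quasi-star degenerates to a plain star, turning $S(v,e)$ into the clean quadratic $e(e+1)$; after that the argument is a one-line factorization. The role of the hypothesis $e\ge4$ is precisely to exclude the triangle case $(k,t)=(3,0)$, where the bracket vanishes and $S=C$, consistent with the equality $S(v,3)=C(v,3)$ noted earlier.
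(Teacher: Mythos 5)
Your proof is correct, but it takes a different route from the paper's. Both arguments start from the same observation that the quasi-star degenerates to the star $K_{1,e}$ in this range, so $S(v,e)=e^2+e$; they diverge in how they handle $C(v,e)$. The paper does not compute $C(v,e)$ exactly: it invokes the upper bound $C(v,e)\leq U(e)=e(\sqrt{8e+1}-1)$ from Lemma \ref{lem:upperbS} (already established for other purposes) and then reduces the claim to $\sqrt{8e+1}<e+2$, i.e.\ $0<(e-1)(e-3)$, which holds for $e\geq 4$. You instead evaluate $C(v,e)$ exactly via Equation (\ref{eqn:Ckt}) in the $(k,t)$ parametrization and obtain the identity
\[
S(v,e)-C(v,e)=(k-1)(k-2)\left(\frac{k(k-3)}{4}+t\right),
\]
which I have checked: the constant term does collect to $\tfrac14 k(k-1)(k-2)(k-3)$ and the coefficient of $t$ to $(k-1)(k-2)$. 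Your bookkeeping is also sound: $e\geq 4$ forces $k\geq 3$ (with $t\geq1$ when $k=3$), and $e\leq v-1$ with $v\geq5$ forces $k\leq v-2$, so the quasi-complete formula applies. What your approach buys is an exact formula for the gap in the sparse regime, which makes the equality cases $e\leq 3$ transparent and shows the difference grows like $(k-1)(k-2)t$ within each interval $[\binom{k}{2},\binom{k+1}{2})$; what the paper's approach buys is brevity, since the square-root bound is a one-line consequence of machinery it needs anyway for Lemma \ref{lemma:0.55}.
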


\begin{proof}
  As we showed above in Lemma \ref{lem:upperbS}, $e(\sqrt{8e+1}-1)$ is an upper bound on $C(v,e)$ for all $1 \leq e \leq \binom{v}{2}$. Furthermore, it is easy to see that for $1 \leq e < v$ we have $S(v,e)=e^2+e$. In fact, the \qs graph is optimal for $1 \leq e <v$.  The rest is then straightforward. For $4 \leq e$, we have
  \begin{eqnarray*}
           0 < (e-3)(e-1) & = & (e+2)^2-(8e+1).
  \end{eqnarray*}
  Taking square roots and rearranging some terms proves the result.
\end{proof}

\begin{lemma} \label{lemma:0.55}
       Assume $v \geq 5$. For $v \leq e \leq m - 0.55v$ we have
       \begin{eqnarray*}
             S(v,e) & > & C(v,e).
       \end{eqnarray*}
\end{lemma}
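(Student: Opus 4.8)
The plan is to convert the claim into a statement about the \emph{average growth rate} of $C(v,\cdot)$ between $e$ and $e':=\binom{v}{2}-e$. Combining the exact identity $S(v,e)=C(v,e')+(v-1)(4e-v(v-1))=C(v,e')+4(v-1)(e-m)$ from Equation (\ref{eqn:SCconnection}) with the one-step increment read off directly from Equation (\ref{eqn:Ckt}),
\[
C(v,s+1)-C(v,s)=2\,h(s),\qquad h(s):=k(s)+t(s)=s-\tfrac12\,k(s)\big(k(s)-3\big),
\]
where $s=\binom{k(s)}{2}+t(s)$ with $0\le t(s)<k(s)$, I would rewrite
\[
S(v,e)-C(v,e)=2\Big(\sum_{s=e}^{e'-1}h(s)-(v-1)(e'-e)\Big).
\]
So the lemma is equivalent to the assertion that the average of $h(s)$ over the integer interval $e\le s<e'$ strictly exceeds $v-1$. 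The virtue of this exact form is that it avoids replacing each $C$ by its envelope, which is the step that loses too much near $m$ (see below).

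Since the interval is symmetric about $\tfrac12(e+e'-1)=m-\tfrac12$, the average of $s$ over it is exactly $m-\tfrac12$, and the reformulation collapses to the single inequality
\[
\frac{1}{e'-e}\sum_{s=e}^{e'-1}k(s)\big(k(s)-3\big)\;<\;2m-2v+1=\frac{v^2-5v+2}{2}.
\]
I would evaluate the left-hand sum in closed form by grouping the terms according to the value of $k(s)$: each complete block $k$ contributes $k$ equal terms $k(k-3)$, so the sum is a Faulhaber-type sum of $k^3-3k^2$ over the blocks met by $[e,e')$, corrected by two partial end blocks. For $e$ comfortably below $m$ this is routine, and the envelope bounds suffice: Lemmas \ref{lem:upperbS} and \ref{lem:lowerbS} give $C(v,e')-C(v,e)\ge L(e')-U(e)$, and one checks that $L(e')-U(e)>4(v-1)(m-e)$ as soon as $m-e$ is a large enough multiple of $v$, which disposes of the bulk of the range. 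The left endpoint $e=v$ is also easy, since there the quasi-star is a star plus one edge and $S(v,v)=(v-1)^2+v+5$, which exceeds $C(v,v)$ for $v\ge 5$ by inspection; this dovetails with Lemma \ref{lemma:4<e<v}.

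The main obstacle is the right end, $e$ just below $m-0.55v$. There $S(v,e)-C(v,e)$ is only of order $v^2$ with a small leading constant, whereas the crude substitution $C(v,e')\mapsto L(e')$, $C(v,e)\mapsto U(e)$ discards an $O(m)$ term: the envelope functions carry the \emph{mismatched} linear coefficients $1$ and $\tfrac32$, and their combined slack near $e\approx m$ is comparable to the quantity being bounded, so $L(e')-U(e)-4(v-1)(m-e)$ can actually turn negative there even though $S-C$ stays positive. Moreover the averaged inequality genuinely fails for $e$ too close to $m$: as $e\uparrow m$ the average of $h$ tends to $h(m)=k_0+b_0$, and whenever $k_0+b_0\le v-1$ (the regime, governed by the sign of $q_0(v)$, in which $C\ge S$ just below $m$) the inequality breaks, which is exactly why the range must be cut off below $m$. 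Hence near the endpoint I would not use $U,L$ but estimate the block sum directly, retaining the subleading terms; the threshold $0.55$ is precisely the point at which enough of the large-$k$ blocks near $s=e'$ (with $k$ running up to $v-1$) are included to push the average of $h$ above $v-1$ with a strictly positive margin. I expect the cleanest finish to be to bound the two partial-block contributions explicitly, reduce to a one-variable polynomial inequality in $v$, and supplement it with a direct check for the finitely many small $v$ for which $[v,m-0.55v]$ is nonempty but the asymptotic margin is still too thin.
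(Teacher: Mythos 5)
Your exact reformulation is correct and attractive: telescoping $C(v,s+1)-C(v,s)=2\bigl(k(s)+t(s)\bigr)$ and invoking Equation~(\ref{eqn:SCconnection}) does reduce the lemma to the assertion that the average of $h(s)=s-\tfrac12 k(s)\bigl(k(s)-3\bigr)$ over $e\le s<e'$ exceeds $v-1$; this is the discrete counterpart of the paper's piecewise-linearity computation (Lemma~\ref{lem:linearity}). But your argument stops exactly where the work begins. The decisive quantitative step --- verifying the inequality at the endpoint $e=m-0.55v$ --- is never executed: ``bound the two partial-block contributions explicitly, reduce to a one-variable polynomial inequality in $v$'' is a plan, not a proof, and the constant $0.55$ is never actually shown to be large enough. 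Moreover you need the inequality for \emph{every} $e$ in $[v,\,m-0.55v]$, and since $h(s)$ is not monotone (it drops by $k-2$ at each block boundary $s=\binom{k+1}{2}$), checking the two ends of the range does not by itself cover the interior. The paper handles this by showing that the continuous minorant $h(d)-\bigl(\bigl(4(v-1)+\tfrac52\bigr)d+\tfrac m2\bigr)$, with $d=m-e$, is concave on the interval, so that only $d=0.55v$ and $d=m-v$ need checking; you would need an analogous reduction, which the block-sum formulation does not automatically supply.

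Your stated reason for abandoning the envelope bounds at the right endpoint is also mistaken. Writing $e=m-d$, one has $L(e')+(v-1)(4e-v(v-1))-U(e)=h(d)-\bigl(4(v-1)+\tfrac52\bigr)d-\tfrac m2$, where $h(d)=(m+d)\sqrt{8(m+d)+1}-(m-d)\sqrt{8(m-d)+1}$; at $d=0.55v$ this equals $\bigl(\tfrac{33}{20}\sqrt2-\tfrac{93}{40}\bigr)v^2+O(v)\approx 0.0084\,v^2>0$ for large $v$, which is precisely how the paper closes the argument (verified algebraically for $v\ge29$ and by direct computation for $5\le v\le28$). The envelope slack is indeed of order $m$, and the bound does turn negative for $e$ much closer to $m$ (its value at $d=0$ is $-m/2$), but that region lies outside the lemma's range, so no replacement of $U,L$ by exact block sums is needed anywhere in $[v,\,m-0.55v]$. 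In short: the identity is right and the left endpoint $e=v$ is fine, but the quantitative heart of the lemma is missing, and the one place where you propose to deviate from the envelope method is a place where that method already succeeds.
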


\begin{proof}
Assume that $0 \leq e \leq m$. Let $e=m-d$ with $0 \leq d \leq m$.
By Lemmas \ref{lem:upperbS} and \ref{lem:lowerbS}, we have
\begin{eqnarray*}
    S(v,e) - C(v,e) & \geq & L(e') +(v-1)(4e-v(v-1)) - U(e) \\
                 & = & (m+d) \sqrt{8 (m+d)+1} -(m-d) \sqrt{8
                 (m-d)+1} \\
                 & & -\left( \left(4 (v-1)  + \frac{5}{2}\right)d
                 + \frac{m}{2} \right).
\end{eqnarray*}
We focus on the first two terms. Set
\begin{eqnarray}
    h(d) & = & (m+d) \sqrt{8 (d+m)+1} -(m-d) \sqrt{8 (m-d)+1}.
\end{eqnarray}
By considering a real variable $d$, it is easy to see that
$h^\prime(d)>0$, $h^{(2)}(0)=0$, and $h^{(3)}(d)<0$
on the interval in question. Thus $h(d)$ is concave down on $0 \leq
d \leq m$. We are comparing $h(d)$ with the line $(4(v-1)+5/2)d+m/2$
on the interval $[0.55 v,m-v]$. The concavity of $h(d)$ allows to
check only the end points. For $d=m-v$, we need to check
$$
 \frac{1}{2} v \left((v-3) \, \sqrt{4 v^2-12 v+1} -2 \sqrt{8 v+1} \right)
  >
 \frac{1}{4} v \left(4 v^2-21 v+7\right)
$$
It is messy and elementary to verify this inequality for $v\geq9$.

For $d=0.55v$ we need to check
$$
 \left(\frac{v^2}{4}+0.3 v\right) \sqrt{2 v^2+2.4 v+1}-
 \left(\frac{v^2}{4}-0.8 v\right) \sqrt{2 v^2-6.4 v+1}
 > v(2.325 v - 0.95).
$$
This inequality holds for $v \geq 29$. This time the calculations
are a bit messier, yet still elementary. For $4 < v \leq 28$, we
verify the result directly by calculation.
\end{proof}

In Section 1, we introduced the value $e_0= \binom{k_0}{2}$.

We now define
\begin{eqnarray*}
e_1 &=& \binom{k_0-1}{2}\\
f_1&=&\binom{v}{2}-\binom{k_0+1}{2}\\
f_2&=&\binom{v}{2}-\binom{k_0+2}{2}.
\end{eqnarray*}

The next lemma shows that those binomial coefficients and their
complements are all we need to consider.

\begin{lemma} \label{lemma:e1f2}
      $e_1,f_2 < m - 0.55v$.
\end{lemma}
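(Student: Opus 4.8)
The plan is to reduce both inequalities to a single lower bound on $k_0$, and then to derive that bound directly from the defining relation $\binom{k_0}{2} \le m < \binom{k_0+1}{2}$. First I would rewrite $e_1$ and $f_2$ in terms of $m$ and $k_0$. Since
\[
e_1 = \binom{k_0-1}{2} = \binom{k_0}{2} - (k_0-1) = e_0 - (k_0-1)
\]
and $e_0 = \binom{k_0}{2} \le m$, we get $e_1 \le m - (k_0-1)$; hence $e_1 < m - 0.55v$ holds as soon as $k_0 - 1 > 0.55v$. For $f_2$ I would use $\binom{v}{2} = 2m$ to write $f_2 = 2m - \binom{k_0+2}{2}$, and then the identity $\binom{k_0+2}{2} = \binom{k_0+1}{2} + (k_0+1) > m + (k_0+1)$, which follows from $\binom{k_0+1}{2} > m$, yields $f_2 < m - (k_0+1)$. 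Thus $f_2 < m - 0.55v$ follows from the \emph{weaker} requirement $k_0 + 1 > 0.55v$. So it suffices to prove the single bound $k_0 > 0.55v + 1$.

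Next I would establish this bound on $k_0$. From the upper half of the defining condition, $m < \binom{k_0+1}{2}$, I obtain $k_0(k_0+1) > \tfrac12 v(v-1)$, equivalently $(2k_0+1)^2 > 2v^2 - 2v + 1$, so that
\[
k_0 > \frac{\sqrt{2v^2 - 2v + 1} - 1}{2}.
\]
It then suffices to check that the right-hand side is at least $0.55v + 1$, i.e. that $\sqrt{2v^2 - 2v + 1} \ge 1.1v + 3$. Squaring (both sides are positive) reduces this to $0.79v^2 - 8.6v - 8 \ge 0$, which holds for all $v \ge 12$, completing the argument in that range.

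Finally, the finitely many remaining cases $5 \le v \le 11$ I would dispatch by direct computation of $k_0$, $e_1$, and $f_2$. I expect no deep obstacle; the one point to watch is that the crude estimate $e_1 \le m - (k_0-1)$ is slightly too weak for a couple of small values (for instance at $v=11$ one has $k_0 - 1 = 6 < 0.55v$), but the actual slack $b_0 = m - e_0 > 0$, which I dropped in passing from $e_1 = m - b_0 - (k_0-1)$ to $e_1 \le m - (k_0-1)$, is more than large enough to make $e_1 < m - 0.55v$ hold comfortably in each of these cases. The only genuine care needed throughout is to use the two sides of the floor condition in the correct directions, namely $\binom{k_0}{2} \le m$ for the $e_1$ estimate and $\binom{k_0+1}{2} > m$ for both the $f_2$ estimate and the lower bound on $k_0$.
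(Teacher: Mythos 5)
Your proposal is correct and follows essentially the same route as the paper: both reduce $e_1\leq m-(k_0-1)$ and $f_2<m-(k_0+1)$ to the single requirement $k_0-1>0.55v$, establish that via a lower bound on $k_0$ coming from $m<\binom{k_0+1}{2}$ (the paper quotes its Lemma \ref{lem:kzerobound} where you re-derive an equivalent bound inline), and dispatch $5\leq v\leq 11$ by direct computation. Your observation that the reduction itself fails at $v=11$ and must be patched by the actual values is a correct and worthwhile point that the paper glosses over with ``we verify the statement by direct calculation.''
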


As a consequence $S(v,e)>C(v,e)$ for all $4 \leq e \leq \max \{
e_1, f_2 \}$. We need a small result on the relationship between
$k_0$ and $v$ first. The upper bound will be used later in this
section.

\begin{lemma} \label{lem:kzerobound}
       $\frac{\sqrt{2}}{2} \left(v - \frac{1}{2} \right)- \frac{1}{2} < k_0 < \frac{\sqrt{2}}{2} v + \frac{1}{2}$.
\end{lemma}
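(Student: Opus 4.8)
The plan is to work directly from the defining relation \eqref{eqn:k0}, namely $\binom{k_0}{2}\le m<\binom{k_0+1}{2}$, after substituting $m=\tfrac14 v(v-1)$. Clearing denominators in both halves (using $\binom{k_0}{2}=\tfrac12 k_0(k_0-1)$ and $\binom{k_0+1}{2}=\tfrac12 k_0(k_0+1)$) turns the two inequalities into the single chain
\[
2k_0^2-2k_0\le v^2-v<2k_0^2+2k_0,
\]
and the entire lemma reduces to extracting bounds on $k_0$ from the two ends of this chain. The guiding idea is to complete the square everywhere, since $v^2-v=(v-\tfrac12)^2-\tfrac14$ and $2k_0^2\mp 2k_0=2(k_0\mp\tfrac12)^2-\tfrac12$, after which each estimate becomes a one-line comparison of square roots.

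For the upper bound I would use the left inequality $2k_0^2-2k_0\le v^2-v$. Completing the square on both sides gives $2(k_0-\tfrac12)^2\le(v-\tfrac12)^2+\tfrac14$, and since $k_0\ge1$ we may take the positive square root to obtain
\[
k_0\le \tfrac12+\tfrac{1}{\sqrt2}\sqrt{(v-\tfrac12)^2+\tfrac14}.
\]
It then suffices to observe that $(v-\tfrac12)^2+\tfrac14=v^2-v+\tfrac12<v^2$ for every $v\ge1$, so the square root is strictly smaller than $v$ and therefore $k_0<\tfrac12+\tfrac{v}{\sqrt2}=\tfrac{\sqrt{2}}{2}v+\tfrac12$, which is the right-hand inequality of the lemma.

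For the lower bound I would use the right inequality $v^2-v<2k_0^2+2k_0$. The same completion of the square yields $(v-\tfrac12)^2+\tfrac14<2(k_0+\tfrac12)^2$, so that $k_0+\tfrac12>\tfrac{1}{\sqrt2}\sqrt{(v-\tfrac12)^2+\tfrac14}$. Discarding the harmless $+\tfrac14$ under the root gives $k_0+\tfrac12>\tfrac{1}{\sqrt2}(v-\tfrac12)$, i.e.\ $k_0>\tfrac{\sqrt{2}}{2}\!\left(v-\tfrac12\right)-\tfrac12$, which is exactly the left-hand inequality of the lemma.

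I do not anticipate any genuine obstacle here; the computation is elementary. The only points requiring a little care are tracking where strictness comes from—the upper bound inherits it from $(v-\tfrac12)^2+\tfrac14<v^2$, while the lower bound inherits it from the strict right-hand inequality in \eqref{eqn:k0}—and noting that the trivial range condition $v\ge1$ is all that is needed to guarantee $v^2-v+\tfrac12<v^2$, so that no separate case analysis for small $v$ arises.
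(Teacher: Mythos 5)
Your proof is correct and follows essentially the same route as the paper: both complete the square in the defining inequality $\binom{k_0}{2}\le m<\binom{k_0+1}{2}$ and take square roots, with the final strict bounds coming from $(v-\tfrac12)^2+\tfrac14<v^2$ on one side and dropping the $+\tfrac14$ under the root on the other. The only (immaterial) difference is that the paper first sharpens $m<\binom{k_0+1}{2}$ to $m\le\binom{k_0+1}{2}-\tfrac12$ using integrality of $2m$, a refinement it then discards anyway.
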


\begin{proof}
    Since $\binom{k_0}{2} \leq m \leq \binom{k_0+1}{2}-\frac{1}{2}$, we have
    \[
      2k_0(k_0-1) \leq v^2-v \leq 2k_0(k_0+1)-2.
    \]
    Thus
    \[
       2(k_0-1/2)^2 \leq (v-1/2)^2 +1/4 \leq 2(k_0+1/2)^2-2.
    \]
    That is,
    \begin{eqnarray*}
       \frac{\sqrt{2}}{2} \sqrt{\left(v - \frac{1}{2} \right)^2 +\frac{9}{4}} -\frac{1}{2}
       \leq k_0 \leq
       \frac{\sqrt{2}}{2} \sqrt{\left(v - \frac{1}{2} \right)^2 + \frac{1}{4}}+\frac{1}{2}.
    \end{eqnarray*}
    The result follows using $(v-1/2)^2<(v-1/2)^2+9/4$ and $(v-1/2)^2+1/4<v^2$.
\end{proof}

\begin{proof}[Proof of Lemma \ref{lemma:e1f2}]
    Note that $e_1=e_0-(k_0-1) \leq m -(k_0-1)$ and $f_2=f_1-(k_0+1) <
    m-(k_0+1)<m-(k_0-1)$. Hence, it is enough to show that
    $0.55v < (k_0-1)$. This follows from the previous lemma for
    $v\geq12$. For $5 \leq v \leq 11$, we verify the statement by
    direct calculation.
\end{proof}

Next, we show that the difference function
\[
\DF(v,e)=S(v,e)-C(v,e)
\]
is piecewise linear on the intervals  induced by the binomial
coefficients $\binom{k}{2}$, $2 \leq k \leq v$, and their
complements $\binom{v}{2}-\binom{k}{2}$, $2 \leq k \leq v$. In
Section \ref{sec:piecelinearity}, we show a specific example.

\begin{lemma} \label{lem:linearity}
As a function of $e$, the function $\DF(v,e)$ is linear on the
interval $\max \{ \binom{k}{2},\binom{v}{2}-\binom{l+1}{2} \} \leq e
\leq \min \{ \binom{k+1}{2}, \binom{v}{2}-\binom{l}{2} \}$. The line
has slope
\begin{equation}\label{eqn:klslope}
-\frac{1}{4} \left( 1-(2k-3)^2-(2l-3)^2+(2v-5)^2 \right).
\end{equation}
\end{lemma}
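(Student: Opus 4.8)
The plan is to reduce everything to the closed form for $C$ and the symmetry relation between $S$ and $C$. By Equation (\ref{eqn:SCconnection}), $\DF(v,e)=S(v,e)-C(v,e)=C(v,e')+(v-1)(4e-v(v-1))-C(v,e)$, where $e'=\binom{v}{2}-e$. On the stated interval two things happen simultaneously: the constraint $\binom{k}{2}\le e\le\binom{k+1}{2}$ means $e=\binom{k}{2}+t$ with $t=e-\binom{k}{2}$ an affine function of $e$ of slope $+1$, while the constraint $\binom{v}{2}-\binom{l+1}{2}\le e\le\binom{v}{2}-\binom{l}{2}$ means $e'=\binom{l}{2}+s$ with $s=\binom{v}{2}-e-\binom{l}{2}$ an affine function of $e$ of slope $-1$. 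In particular $t\in[0,k]$ and $s\in[0,l]$ throughout, so by Equation (\ref{eqn:Ckt}) both $C(v,e)=C(k,t)=k(k-1)^2+t^2+t(2k-1)$ and $C(v,e')=C(l,s)=l(l-1)^2+s^2+s(2l-1)$ are quadratic polynomials in $e$ on the whole interval.

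For the linearity, I would observe that the only quadratic contributions to $\DF(v,e)$ come from the $t^2$ and $s^2$ terms. Since $t$ has slope $+1$ and $s$ has slope $-1$ as functions of $e$, the summand $-t^2$ contributes $-e^2$ while the summand $+s^2$ contributes $+e^2$, so the two $e^2$-terms cancel and $\DF(v,e)$ is affine in $e$ on the interval. This cancellation is the cleanest route to linearity, and it sidesteps any boundary subtlety coming from the non-unique representation $e=\binom{k}{2}+k=\binom{k+1}{2}+0$ noted after Equation (\ref{eqn:Ckt}).

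To pin down the slope I would compute the constant first difference $\DF(v,e)-\DF(v,e-1)$. From $C(k,t)-C(k,t-1)=2(k+t-1)$ I get $C(v,e)-C(v,e-1)=2(k+t-1)$; since lowering $e$ by one raises $e'$ by one, the corresponding $e'$-contribution is $C(v,e')-C(v,e'+1)=-2(l+s)$; and the linear term contributes $(v-1)\cdot 4$. Adding these,
\[
\DF(v,e)-\DF(v,e-1)=-2(l+s)+4(v-1)-2(k+t-1).
\]
At this point I would use the identity $s+t=\binom{v}{2}-\binom{k}{2}-\binom{l}{2}$, which is independent of $e$ and so reconfirms that the slope is constant. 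Substituting the binomial coefficients and simplifying turns the right-hand side into $k^2-3k+l^2-3l-v^2+5v-2$, which is exactly $-\tfrac14\!\left(1-(2k-3)^2-(2l-3)^2+(2v-5)^2\right)$, as claimed.

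The computation is entirely elementary, so there is no deep obstacle; the one place that genuinely needs care is the bookkeeping, namely correctly matching the parameter $k$ to the $\binom{k}{2}$-type left endpoint and $l$ to the $\binom{v}{2}-\binom{l}{2}$-type right endpoint, and tracking that $e'$ decreases as $e$ increases (hence the sign flip in the $e'$-contribution). To be safe I would verify the two difference formulas at the interval endpoints, where $t$ or $s$ may take the boundary values $0$ or $k$ (respectively $l$), using the consistency $C(k,k)=C(k+1,0)$ to confirm that the single affine expression for $\DF(v,e)$ is valid across the entire closed interval.
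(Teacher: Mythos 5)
Your proposal is correct and takes essentially the same route as the paper: both reduce the claim to first differences of $C$ via Equations (\ref{eqn:SCconnection}) and (\ref{eqn:Ckt}) and observe that the $e$-dependent contributions cancel, leaving the constant slope (\ref{eqn:klslope}). Your remark that the $t^2$ and $s^2$ terms cancel is just the integrated form of the paper's cancellation of the $2e$ terms in $C(v,e+1)-C(v,e)$ and $S(v,e+1)-S(v,e)$, so there is no substantive difference.
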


\begin{proof}
If $e=\tbinom{k+1}{2}-j$ with $1 \leq j \leq k$, then it is easy to
see from Equation (\ref{eqn:Cve}) that
\begin{eqnarray*}
      C(v,e+1)-C(v,e) & = & 2e - 2\binom{k}{2} + 2k = 2e - k(k-3).
\end{eqnarray*}

Using Equations (\ref{eqn:SCconnection}) and (\ref{eqn:Ckt}), we find
that, if $e^\prime=\binom{l}{2}+c$, $1 \leq c \leq l$, then
\begin{eqnarray*}
       S(v,e+1) - S(v,e) & = & 2e+ 4(v-1)-2\binom{v}{2} - 2l + 2\binom{l}{2}+2.
\end{eqnarray*}
We now have
\begin{eqnarray*}
      \lefteqn{\left( S(v,e+1)-C(v,e+1) \right) - \left( S(v,e)-C(v,e)
      \right) } \\
      & = & k(k-3) + l(l-3) - (v-1)(v-4) + 2 \\
      & & \\
      & = & -\frac{1}{4} \left(1-(2k-3)^2-(2l-3)^2+(2v-5)^2
      \right).
\end{eqnarray*}

The conclusion follows.
\end{proof}

Since we already know that $\DF(v,e)>0$ for $4 \leq e \leq \max \{
e_1,f_2 \}$, and $\DF(v,e)=0$ for $e=0,1,2,3,$ or $m$, we can now focus on the interval $I_1=(\max \{ e_1,f_2
\},m)$. The only binomial coefficients or complements of binomial
coefficients that can fall into this interval are $e_0$ and $f_1$.

There are two possible arrangements we need to consider

\begin{enumerate}
  \item $e_1, f_2 < e_0  \leq f_1 < m$ and
  \item $f_1 < e_0 \leq m$.
\end{enumerate}

The next result deals with the first arrangement.

\begin{lemma}
   If $e_0 \leq f_1 < m$, then $q_0(v)>0$. Furthermore, $S(v,e) \geq C(v,e)$ for $0 \leq e \leq m$ with equality if
   and only if $e=0,1,2,3,$ or $m$; or $e=e_0$ and $(2v-3)^2-2(2k_0-1)^2 =-1,7$.
\end{lemma}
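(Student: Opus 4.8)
The plan is to exploit the piecewise linearity of $\DF(v,e):=S(v,e)-C(v,e)$ established in Lemma \ref{lem:linearity}. A piecewise-linear function is nonnegative on an interval exactly when it is nonnegative at every breakpoint, so to prove $\DF(v,e)\ge 0$ on $[0,m]$ it suffices to check $\DF$ at the points $\binom{j}{2}$ and $\binom{v}{2}-\binom{j}{2}$ lying in $[0,m]$. We already know $\DF=0$ at $e=0,1,2,3$ and (by the point-symmetry about $m$) at $e=m$, while the consequence of Lemma \ref{lemma:e1f2} gives $\DF>0$ for $4\le e\le \max\{e_1,f_2\}$. Under the hypothesis $e_0\le f_1<m$ the only breakpoints remaining in $(\max\{e_1,f_2\},m)$ are $e_0$ and $f_1$ (indeed $e_1<e_0$, and $f_2=f_1-(k_0+1)<e_0$ since $b_0<k_0$ always), so everything reduces to evaluating $\DF(e_0)$ and $\DF(f_1)$. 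First I would record the translation of the hypothesis: since $\binom{k_0}{2}+\binom{k_0+1}{2}=k_0^2$ and $2m=k_0(k_0-1)+2b_0$, the inequality $e_0\le f_1$ is equivalent to $2b_0\ge k_0$, equivalently $k_0^2\le 2m=\tfrac{v(v-1)}{2}$.

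For the claim $q_0(v)>0$, I would expand the definition to $q_0(v)=-2k_0^2+6k_0+v^2-5v+2$. Feeding in $2k_0^2\le v^2-v$ gives $q_0(v)\ge 6k_0-4v+2$, so it is enough to show $k_0>(2v-1)/3$. This follows from the lower bound $k_0>\tfrac{\sqrt{2}}{2}\bigl(v-\tfrac12\bigr)-\tfrac12$ of Lemma \ref{lem:kzerobound} for $v\ge 13$, and the finitely many remaining cases $5\le v\le 12$ satisfying $2b_0\ge k_0$ are checked by hand.

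Next I would compute the two breakpoint values from the slopes in Lemma \ref{lem:linearity}. On $[f_1,m]$ one has $k=l=k_0$, so the slope is exactly $-q_0(v)$; combined with $\DF(m)=0$ and $m-f_1=k_0-b_0>0$ this yields $\DF(f_1)=q_0(v)(k_0-b_0)>0$. On $[e_0,f_1]$ the parameters are $k=k_0,\ l=k_0+1$, giving slope $s_2=-q_0(v)+2(k_0-1)$; since $f_1-e_0=2b_0-k_0$, I get $\DF(e_0)=\DF(f_1)-s_2(2b_0-k_0)=q_0(v)b_0-2(k_0-1)(2b_0-k_0)$. The crux is to simplify this: using $q_0(v)=4k_0+4b_0-4v+2$ together with $2k_0^2-2k_0=v^2-v-4b_0$ to eliminate $k_0$, the expression collapses to the clean factorization
\[
\DF(e_0)=4b_0^2-(4v-2)b_0+v^2-v=(2b_0-v)(2b_0-v+1).
\]
Since $2b_0=\binom{v}{2}-k_0(k_0-1)$ is an integer, this is a product of two consecutive integers, hence $\ge 0$; with $\DF(f_1)>0$ and the known positivity below $e_0$, this completes the proof that $\DF\ge 0$ on $[0,m]$.

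Finally, for the equality statement I would argue that within $(\max\{e_1,f_2\},m)$ the function can vanish only at $e_0$: on $[f_1,m]$ the slope $-q_0(v)$ is strictly negative, so $\DF>0$ except at $m$; on $[e_0,f_1]$ and $[\max\{e_1,f_2\},e_0]$ the function is linear with a strictly positive value at one endpoint, so it can reach $0$ only at $e_0$. Thus the only possible extra equality is $\DF(e_0)=0$, which by the factorization occurs iff $2b_0=v$ or $2b_0=v-1$. Translating via the identity $(2v-3)^2-2(2k_0-1)^2=16b_0-8v+7$ (which follows from $v^2-v=2k_0^2-2k_0+4b_0$), these two cases are exactly $(2v-3)^2-2(2k_0-1)^2=7$ and $=-1$, as claimed. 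I expect the main obstacle to be the algebraic simplification of $\DF(e_0)$ into the consecutive-integer product: getting the slope bookkeeping and the substitution $2k_0^2-2k_0=v^2-v-4b_0$ exactly right is what makes both the nonnegativity and the Pell-type equality conditions drop out, and it is easy to slip a sign or a factor there.
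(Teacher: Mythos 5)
Your proof is correct and follows essentially the same route as the paper's: reduce via piecewise linearity to the breakpoints $e_0$ and $f_1$, prove $q_0(v)>0$ from $k_0^2\le\binom{v}{2}$ together with the lower bound on $k_0$ from Lemma \ref{lem:kzerobound}, and identify $\DF(v,e_0)$ as a product of two consecutive integers whose vanishing is exactly the stated Pell condition. The only cosmetic difference is that you obtain $\DF(v,f_1)$ and $\DF(v,e_0)$ by accumulating slopes from Lemma \ref{lem:linearity} starting at $\DF(v,m)=0$, whereas the paper substitutes directly into Equations (\ref{eqn:Ckt}) and (\ref{eqn:SCconnection}); the resulting expressions agree, since the paper's factorization $(v^2-3v-2k_0^2+2k_0+2)(v^2-3v-2k_0^2+2k_0)/4$ equals your $(2b_0-v)(2b_0-v+1)$.
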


\begin{proof}
   $e_0 \leq f_1$ implies $e_0 \leq m-k_0/2$. By Lemma \ref{lem:kzerobound}, we conclude that for
   $v>12$,
   \begin{eqnarray*}
   4q_0(v) & = & 1-2(2k_0-3)^2+(2v-5)^2\\
   & = &16(m-e_0)-16(v-k_0)+8 \\
   & \geq & 24k_0 - 16 v + 8 \\
   & \geq & 24(\sqrt{2}/2 (v-1/2) -1/2) - 16 v+8 \\
   & = & (12\sqrt{2}-16)v -( 6\sqrt{2}+4) \\
   & > & 0.
   \end{eqnarray*}
   For smaller values, we verify that $q_0(v)>0$ by direct calculation.

If $e=f_1$ in Equation (\ref{eqn:Ckt}), and since $e_0 \leq f_1 <m$,
then $k=k_0$ and $t=f_1-\tbinom{k_0}{2}$. Using Equation
(\ref{eqn:SCconnection}), $\DF(v,f_1) =(m-f_1)q_0(v)>0$. Similarly,
since $f_2 < e_0 \leq f_1$, then for $e=e_0^\prime$ in Equation
(\ref{eqn:Ckt}), we have $k=k_0+1$ and
$t=e_0^\prime-\tbinom{k_0+1}{2}$. Again, using Equation
(\ref{eqn:SCconnection}),
\begin{eqnarray}
   \DF(v,e_0) & = & (v^2-3v-2k_0^2+2k_0+2)(v^2-3v-2k_0^2+2k_0)/4 \label{diffve0} \\
   & = &((2v-3)^2-2(2k_0-1)^2+1)((2v-3)^2-2(2k_0-1)^2-7)/64. \nonumber
\end{eqnarray}
Notice that $\DF(v,e_0)\geq 0$ since both factors in (\ref{diffve0})
are even and differ by 2. Equality occurs if and only if
$(2v-3)^2-2(2k_0-1)^2=-1$ or $7$. Finally, observe that
$\DF(v,e_1)>0$ and $\DF(v,f_2)>0$ by Lemmas \ref{lemma:0.55} and
\ref{lemma:e1f2}, and $e_1$ and $f_2$ are both less than $f_1$.
Hence $\DF(v,e) \geq 0$ for $e \in [\max \{ e_1,f_2 \},m]$ follows
from  the piecewise linearity of $\DF(v,e)$. The rest follows from
Lemma \ref{lemma:0.55}.
\end{proof}


Now we deal with the case $f_1<e_0$. There are three cases depending on the sign of $q_0(v)$. All these
cases require the following fact. If $f_1<e_0$, then for $e_0 \leq e
\leq m$ in Equation (\ref{eqn:Ckt}), $k=k_0$ and
$t=e-\tbinom{k_0}{2}$. Since $f_1<e\leq m$, for $e^\prime$ in
Equation (\ref{eqn:Ckt}), $k=k_0$ and $t=e^\prime-\tbinom{k_0}{2}$.
Thus, using Equation (\ref{eqn:SCconnection}),
\begin{equation} \label{eqn:diffnearm}
    \DF(v,e) =(m-e)q_0(v)
\end{equation}
whenever $f_1<e_0\leq e \leq m$. This automatically gives the sign of $\DF(v,e)$ near $m$. By the piecewise linearity of $\DF(v,e)$ given by Lemma \ref{lem:linearity}, the only thing remaining is to investigate the sign of $\DF(v,f_1)$.

\begin{lemma}
      Assume $f_1<e_0$ and $q_0(v)>0$. Then $S(v,e) \geq C(v,e)$ for $0 \leq e \leq m$, with equality if and only if $e=0,1,2,3,m$.
\end{lemma}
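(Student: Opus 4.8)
The plan is to exploit the piecewise linearity of $\DF(v,e)$ established in Lemma \ref{lem:linearity}: a piecewise-linear function is nonnegative on an interval exactly when it is nonnegative at every breakpoint, so the whole claim reduces to determining the sign of $\DF(v,\cdot)$ at the finitely many breakpoints in $[0,m]$, namely the triangular numbers $\binom{k}{2}\le m$ and the complements $\binom{v}{2}-\binom{k}{2}\le m$. Once each such value is pinned down, the conclusion follows by interpolation, and the equality set will be precisely the union of those closed subintervals whose two endpoints both vanish.

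First I dispose of the breakpoints away from $m$. For $e=0,1,2,3$ we already know $\DF(v,e)=0$, and linearity on $[0,1]$ and $[1,3]$ forces $\DF\equiv 0$ throughout $[0,3]$. For every breakpoint $e$ with $4\le e\le \max\{e_1,f_2\}$ we have $\DF(v,e)>0$: Lemmas \ref{lemma:4<e<v} and \ref{lemma:0.55} give $\DF>0$ on $[4,m-0.55v]$, and Lemma \ref{lemma:e1f2} places $\max\{e_1,f_2\}$ inside that range. By the discussion preceding the lemma, the only breakpoints remaining in $(\max\{e_1,f_2\},m)$ are $f_1$ and $e_0$, and in the present arrangement $f_1<e_0\le m$. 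On $[e_0,m]$ formula (\ref{eqn:diffnearm}) gives $\DF(v,e)=(m-e)q_0(v)$, which is $\ge 0$ since $q_0(v)>0$, with equality only at $e=m$; in particular $\DF(v,e_0)=(m-e_0)q_0(v)\ge 0$.

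The crux is therefore the sign of $\DF(v,f_1)$, where formula (\ref{eqn:diffnearm}) no longer applies because $f_1<e_0$. Rather than compute $\DF(v,f_1)$ directly, I would compare slopes. On $[f_1,e_0]=[\binom{v}{2}-\binom{k_0+1}{2},\binom{k_0}{2}]$ the relevant parameters in Lemma \ref{lem:linearity} are $k=k_0-1$ and $l=k_0$: the bound $e_1<f_1\le e<e_0$ forces $k=k_0-1$ on the $C$-side, and $\binom{k_0}{2}\le e'<\binom{k_0+1}{2}$ forces $l=k_0$ on the $S$-side. Subtracting the slope $-q_0(v)$ of the piece $[e_0,m]$ from the slope of the piece $[f_1,e_0]$ collapses to the clean difference $-\tfrac14\big((2k_0-3)^2-(2k_0-5)^2\big)=-2(k_0-2)\le 0$. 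Hence $\DF$ descends at least as steeply on $[f_1,e_0]$ as on $[e_0,m]$; its slope there is $-q_0(v)-2(k_0-2)<0$ since $k_0\ge 2$ and $q_0(v)>0$, so moving leftward from $e_0$ to $f_1$ strictly increases the value, yielding $\DF(v,f_1)>0$.

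Finally I assemble the pieces. Every breakpoint of $\DF$ in $[0,m]$ now satisfies $\DF\ge 0$, with $\DF=0$ only at $e=0,1,3$ (hence on all of $[0,3]$ by linearity) and at $e=m$, while every breakpoint strictly between $3$ and $m$ is strictly positive. Piecewise linearity then gives $\DF(v,e)\ge 0$ for all $0\le e\le m$, with equality exactly at $e\in\{0,1,2,3,m\}$, which is the assertion. I expect the main obstacle to be precisely the treatment of $\DF(v,f_1)$: the convenient formula (\ref{eqn:diffnearm}) governing the region adjacent to $m$ breaks down once $f_1<e_0$, and the slope comparison above is what replaces it. Some care is also needed at the degenerate boundary $e_0=m$, where $\DF(v,e_0)=0$; there the strict negativity of the slope on $[f_1,e_0]$ still forces $\DF(v,f_1)>0$, so the equality characterization is unaffected.
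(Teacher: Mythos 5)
Your proof is correct and follows the same overall skeleton as the paper's (reduce to the breakpoints via Lemma \ref{lem:linearity}, dispose of everything up to $\max\{e_1,f_2\}$ via Lemmas \ref{lemma:4<e<v}, \ref{lemma:0.55}, \ref{lemma:e1f2}, and observe that only $f_1$ and $e_0$ remain), but you handle the crux --- the sign of $\DF(v,f_1)$ --- by a genuinely different and cleaner route. The paper computes $\DF(v,f_1)-\DF(v,e_0)=(2k_0^2-v^2+v)(-2-2k_0^2+8k_0+v^2-5v)/2$ explicitly, proves the second factor positive for $v\geq 15$ by invoking the bound $v<\sqrt{2}k_0+(\sqrt{2}+1)/2$ from Lemma \ref{lem:kzerobound}, and then checks $v=14$ by hand. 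Your slope comparison replaces all of that: the slope on $[f_1,e_0]$ exceeds the slope $-q_0(v)$ on $[e_0,m]$ by exactly $-\tfrac14\bigl((2k_0-3)^2-(2k_0-5)^2\bigr)=-2(k_0-2)$, so it equals $-q_0(v)-2(k_0-2)<0$ under the hypothesis $q_0(v)>0$ (note this quantity is precisely the $-q_1(v)$ the paper introduces later in the $q_0(v)<0$ case), whence $\DF(v,f_1)>\DF(v,e_0)\geq 0$ with no asymptotic estimate and no residual small case. The one loose end is your unproved assertion that $e_1<f_1$: without $e_1\leq f_1$ the interval $[f_1,e_0]$ would contain the breakpoint $e_1$ and would not be a single linear piece, so your identification $k=k_0-1$ on that interval would fail. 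The paper supplies the needed one-liner --- $e_1>f_1$ forces $e_0=m$, which gives $q_0(v)=2-4(v-k_0)<0$, contradicting $q_0(v)>0$ --- and you should include it (or an equivalent computation showing $\binom{k_0-1}{2}+\binom{k_0+1}{2}=2\binom{k_0}{2}+1\leq 2m+1$). With that line added, your argument is complete and, for this case of Theorem \ref{thm:main3}, simpler than the published one; your handling of the degenerate possibility $e_0=m$ is harmless but vacuous, since it cannot occur when $q_0(v)>0$.
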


\begin{proof}
  First, note that $e_1 \leq f_1 < e_0 < m$, since $e_1>f_1$ occurs only if $m=e_0$ and thus $q_0(v)=2-4(v-k_0)<0$. For $e_0 \leq e < m$, by Equation (\ref{eqn:diffnearm}), $\DF(v,e) =(m-e)q_0(v)>0$. Furthermore, if $e=f_1$ in Equation (\ref{eqn:Ckt}), then $k=k_0-1$ and $t=f_1-\tbinom{k_0-1}{2}$. Thus, by Equation (\ref{eqn:SCconnection}),
  \begin{eqnarray*}
    \DF(v,f_1)& = & (-4 k_0^4+16 k_0^3+4 v^2 k_0^2-12 v k_0^2-8v^2 k_0+4 k_0-v^4+6v^3+v^2-6 v)/4,
  \end{eqnarray*}
  and
  \begin{eqnarray*}
       \DF(v,f_1)-\DF(v,e_0) & = &  (2k_0^2-v^2+v)(-2-2k_0^2+8k_0+v^2-5v)/2.
  \end{eqnarray*}
The first factor is positive because $f_1<e_0$. The second factor is positive for $v \geq 15$.  This follows from the fact that $v<\sqrt{2}k_0+(\sqrt{2}+1)/2$ by Lemma \ref{lem:kzerobound}, and $-2-2k_0^2+2k_0+v^2-v \geq 0$ because $e_1 \leq f_1$: For $v \geq
15$,
  \begin{eqnarray*}
     -2-2k_0^2+8k_0+v^2-5v&=&(-2-2k_0^2+2k_0+v^2-v)+2(3k_0-2v)\\
    & \geq & 2(3k_0-2v)\\
    & > &0,
  \end{eqnarray*}
    Since $\DF(v,e_0)>0$, then $\DF(v,f_1)>0$ for $v \geq 15$. The only case left to verify satisfying the conditions of this lemma is $v=14$. In this case, $f_1=36$ and $\DF(14,36)=30>0$.
\end{proof}

The previous two lemmas provide a proof of part 1 of Theorem
\ref{thm:main3}.

\begin{lemma}
  Assume $f_1<e_0$ and $q_0(v)=0$. Then $S(v,e) \geq C(v,e)$ for
  $0 \leq e \leq m$ with equality if and only if $e=0,1,2,3,e_0,e_0+1,\ldots,m$.
\end{lemma}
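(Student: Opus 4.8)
The plan is to exploit the fact that the hypothesis $q_0(v)=0$ collapses the difference function $\DF(v,e)=S(v,e)-C(v,e)$ to zero on the entire upper range $[e_0,m]$, so that the only genuine content is to establish strict positivity below $e_0$.

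First I would record the easy half. Since $f_1<e_0$, Equation (\ref{eqn:diffnearm}) applies and gives $\DF(v,e)=(m-e)q_0(v)$ for every integer $e$ with $e_0\le e\le m$; as $q_0(v)=0$ this is identically $0$. Hence $S(v,e)=C(v,e)$ for all $e_0\le e\le m$, and in particular $\DF(v,e_0)=0$. Together with the already-established equalities $\DF(v,e)=0$ for $e=0,1,2,3$, this accounts for all the claimed equality cases, and it remains only to prove the strict inequality $\DF(v,e)>0$ for $4\le e<e_0$.

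Next I would reduce this to a single value. Because $q_0(v)=0$ forces $2b_0=2(v-k_0)-1>0$, we have $b_0>0$ and so $e_0<m$; moreover $e_1<f_1<e_0$ (the first inequality holding for all $v\ge 5$, the last being the standing hypothesis). Consequently, inside $(\max\{e_1,f_2\},e_0)$ the only endpoint of a linearity piece of $\DF$ in the sense of Lemma \ref{lem:linearity} is $f_1$: no $\binom{k}{2}$ lies strictly between $e_1=\binom{k_0-1}{2}$ and $e_0=\binom{k_0}{2}$, and the only complementary coefficient in range is $f_1$. Since Lemmas \ref{lemma:0.55} and \ref{lemma:e1f2} give $\DF>0$ on $[4,\max\{e_1,f_2\}]$, and we have just shown $\DF(v,e_0)=0$, it suffices to prove $\DF(v,f_1)>0$: then linearity of $\DF$ on the two pieces $[\max\{e_1,f_2\},f_1]$ and $[f_1,e_0]$, whose endpoint values are positive, positive, and zero respectively, yields $\DF>0$ on all of $[4,e_0)$ and $\DF=0$ at $e_0$.

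Finally I would compute $\DF(v,f_1)$. Since $e_1\le f_1<e_0$, at $e=f_1$ one has $k=k_0-1$ in (\ref{eqn:Ckt}), exactly as in the preceding lemma, so the same elementary manipulation produces
\[
\DF(v,f_1)-\DF(v,e_0)=\tfrac12\,(2k_0^2-v^2+v)\,(-2-2k_0^2+8k_0+v^2-5v).
\]
With $\DF(v,e_0)=0$ this makes $\DF(v,f_1)$ equal to the product on the right. The first factor is positive precisely because $f_1<e_0$ (equivalently $2k_0^2>v(v-1)$). For the second factor I would write $-2-2k_0^2+8k_0+v^2-5v=(-2-2k_0^2+2k_0+v^2-v)+2(3k_0-2v)$; the parenthesized term is nonnegative because $e_1\le f_1$, and $3k_0-2v>0$ for large $v$ by the lower bound on $k_0$ in Lemma \ref{lem:kzerobound}. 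Thus $\DF(v,f_1)>0$ for all but finitely many small $v$, which (as in the previous lemma's treatment of $v=14$) are verified by direct computation. The main obstacle is exactly this last step: confirming the positivity of the second factor uniformly, which again reduces to the Lemma \ref{lem:kzerobound} estimate plus a finite check; one should also note that $m$ is a half-integer here (forced by $2b_0$ being odd), so the top of the equality range should be read as the integers up to $\lfloor m\rfloor$, which causes no discrepancy since $\DF$ is defined on integers.
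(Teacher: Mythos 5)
Your proposal is correct and follows essentially the same route as the paper: it uses Equation (\ref{eqn:diffnearm}) to get $\DF(v,e)=(m-e)q_0(v)=0$ on $[e_0,m]$, and then establishes $\DF(v,f_1)>0$ from the identity $\DF(v,f_1)-\DF(v,e_0)=(2k_0^2-v^2+v)(-2-2k_0^2+8k_0+v^2-5v)/2$ with the same positivity argument (first factor from $f_1<e_0$, second factor from $e_1\le f_1$ plus Lemma \ref{lem:kzerobound}), together with a finite check of small $v$ (the paper notes $v=6$ is the only exceptional case). The extra details you supply --- the explicit piecewise-linearity reduction and the observation that $m$ is a half-integer when $q_0(v)=0$ --- are consistent with, and slightly more careful than, the paper's terser write-up.
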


\begin{proof}
  For $e_0 \leq e \leq m$, by Equation (\ref{eqn:diffnearm}), $\DF(v,e) =
(m-e)q_0(v)=0$. As in the previous lemma, for $v\geq 15$
  \begin{eqnarray*}
     \DF(v,f_1)-\DF(v,e_0)= (2 k_0^2-v^2+v) (-2-2k_0^2+8k_0+v^2-5v)/2>0
  \end{eqnarray*}
  and thus $\DF(v,f_1)>0$. The only value of $v<15$ satisfying the conditions of this lemma is $v=6$ with $f_1=5$, and $\DF(6,5)=4>0$.
  \end{proof}

The previous lemma provides a proof for part 3 of Theorem \ref{thm:main3}.

\begin{lemma}
      Assume $f_1 < e_0 \leq m$ and $q_0(v)<0$. Then $S(v,e) \geq C(v,e)$
      for $0 \leq e \leq m-R_0$ and $S(v,e) \leq C(v,e)$ for $m-R_0 \leq e
      \leq m$ with equality if and only if $e=0,1,2,3,m-R_0,m$.
\end{lemma}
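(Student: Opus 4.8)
The plan is to exploit the piecewise linearity of $\DF(v,e)$ from Lemma \ref{lem:linearity} together with the local formula $\DF(v,e)=(m-e)q_0(v)$ valid on $[e_0,m]$ (Equation (\ref{eqn:diffnearm})). Since $q_0(v)<0$ and $m-e\ge0$, this formula already settles the behavior on $[e_0,m]$: there $\DF(v,e)\le 0$, with equality only at $e=m$. The remaining work is to understand $\DF$ on $(\max\{e_1,f_2\},e_0)$ and to show that the unique transition from positive to nonpositive occurs exactly at $e=m-R_0$. Throughout I may assume $b_0>0$ (equivalently $e_0<m$); the degenerate case $b_0=0$ forces $R_0=0$, so that $m-R_0=m$ and the interior dip is absent, and it is handled by the same (indeed simpler) reasoning since then $\DF(v,e_0)=0$.

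Next I would analyze the linear piece of $\DF$ immediately to the left of $e_0$. Applying Lemma \ref{lem:linearity} with $k=k_0-1$ and $l=k_0$ shows $\DF$ is linear on $[\max\{e_1,f_1\},e_0]$, which equals $[f_1,e_0]$ when $b_0>0$ (since then $f_1\ge e_1$). Using the identity $(2k_0-5)^2+(2k_0-3)^2=2(2k_0-4)^2+2$, the slope simplifies to $-\tfrac14 D$, where $D=-1-2(2k_0-4)^2+(2v-5)^2$ is precisely the denominator of $R_0$; a short computation also gives $D=8(k_0-2)+4q_0(v)$. Combining the slope with the endpoint value $\DF(v,e_0)=b_0q_0(v)<0$ and solving for the zero of the line yields the crossing point
\[
e^\ast=e_0+\frac{4b_0q_0(v)}{D}=e_0+b_0-\frac{8b_0(k_0-2)}{D}=m-R_0,
\]
where the middle equality is exactly the substitution $4q_0(v)=D-8(k_0-2)$. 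Thus, provided $\DF(v,f_1)>0$, the line has negative slope (so $D>0$), the crossing lies in $(f_1,e_0)$, and $\DF$ is positive on $[f_1,m-R_0)$ and negative on $(m-R_0,e_0]$.

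The main obstacle is establishing $\DF(v,f_1)>0$. When $f_1<m-0.55v$ this is immediate from Lemmas \ref{lemma:0.55} and \ref{lemma:e1f2}, but $f_1$ can exceed $m-0.55v$ (for instance at $v=20$, where $f_1=85>84$), so the rough bound does not always suffice. Here I would use the explicit polynomial expression for $\DF(v,f_1)$ already derived in the case $q_0(v)>0$ (with $k=k_0-1$ in Equation (\ref{eqn:Ckt})); equivalently, since $e_0-f_1=k_0^2-2m$, one has $\DF(v,f_1)=b_0q_0(v)+\tfrac{D}{4}(k_0^2-2m)$, which after substituting $b_0=\tfrac12\bigl(k_0-(k_0^2-2m)\bigr)$ becomes $\tfrac12 q_0(v)\bigl(k_0+(k_0^2-2m)\bigr)+2(k_0-2)(k_0^2-2m)$. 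Bounding $k_0$ by Lemma \ref{lem:kzerobound} reduces the required inequality to an elementary polynomial inequality in $v$ that holds for all large $v$, with the finitely many remaining small values checked by direct calculation. I expect this to be the most delicate step, because near $m$ the difference is already negative and one must verify that positivity persists all the way down to $f_1$ without an intermediate sign change.

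Finally I would assemble the global picture. The function $\DF$ is positive at the left endpoint $\max\{e_1,f_2\}$ (the consequence of Lemmas \ref{lemma:0.55} and \ref{lemma:e1f2}), and the only breakpoint strictly inside $(\max\{e_1,f_2\},e_0)$ is $f_1$, where $\DF>0$ has just been shown; hence piecewise linearity forces $\DF>0$ on all of $[4,m-R_0)$, a linear segment joining two positive endpoints staying positive. Together with $\DF\le0$ on $[m-R_0,m]$, this gives $S(v,e)\ge C(v,e)$ for $0\le e\le m-R_0$ and $S(v,e)\le C(v,e)$ for $m-R_0\le e\le m$. For the equality statement, $\DF$ vanishes at an integer $e\in(3,m)$ only at the crossing $e=m-R_0$ (when this is an integer) and, when $m$ is an integer, at $e=m$; combined with $\DF(v,e)=0$ for $e=0,1,2,3$, this yields equality exactly at $e\in\{0,1,2,3,m-R_0,m\}$. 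The symmetric statement on $[m,\binom{v}{2}]$, and the appearance of the complementary values $e'$, then follow from the point symmetry $\DF(v,e)=-\DF(v,e')$ coming from Equation (\ref{eqn:SCconnection}).
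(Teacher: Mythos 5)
Your framework coincides with the paper's: Equation~(\ref{eqn:diffnearm}) settles the interval $[e_0,m]$; Lemma~\ref{lem:linearity} with $k=k_0-1$, $l=k_0$ gives the slope $-D/4$ on $[f_1,e_0]$, where $D=-1-2(2k_0-4)^2+(2v-5)^2=4q_1(v)$; the identity $4q_0(v)=D-8(k_0-2)$ correctly identifies the zero of that linear piece with $m-R_0$; and your handling of the degenerate case $e_0=m$ and of the equality set is fine. The problem is that in the range where Lemmas~\ref{lemma:0.55} and~\ref{lemma:e1f2} do not reach, the entire content of the lemma is the single inequality $\DF(v,f_1)>0$, and you do not prove it. Your closed form $\DF(v,f_1)=\tfrac12 q_0(v)\bigl(k_0+(k_0^2-2m)\bigr)+2(k_0-2)(k_0^2-2m)$ is correct, but it does not ``reduce to an elementary polynomial inequality in $v$'' after bounding $k_0$: it is a genuine two-parameter inequality, because the negative term involves $q_0(v)$ and $X:=e_0-f_1$ separately and is comparable in size to the positive term. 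It can be pushed through --- writing $X=k_0-2b_0\in[1,k_0]$ one finds $-q_0(v)=2\bigl(X-c\bigr)$ with $c=3k_0-2v+1$, the inequality becomes $X(k_0-4-X+c)+ck_0>0$, which holds whenever $c\geq 4$, and Lemma~\ref{lem:kzerobound} gives $c\geq(3\sqrt{2}/2-2)v-O(1)$, leaving a finite check --- but this analysis is precisely the missing heart of the proof, not a routine afterthought, and you have only gestured at it.

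The paper closes this step by reversing your logic, and the reversal is what makes the argument clean: instead of proving $\DF(v,f_1)>0$ and deducing that the crossing lies in $(f_1,e_0)$, it proves directly that $f_1<m-R_0<e_0$ and then reads off $\DF(v,f_1)>0$ because the linear piece is decreasing ($q_1(v)>0$) and vanishes at $m-R_0>f_1$. Both containments come straight from the standing hypotheses: $m-R_0<e_0$ is equivalent to $q_1(v)>0$, which is Inequality~(\ref{eqn:v_k0}), a consequence of $e_0\leq m$; and $f_1<m-R_0$, i.e.\ $(m-e_0)(2k_0-4)<\bigl(\tbinom{k_0+1}{2}-m\bigr)q_1(v)$, is obtained by multiplying the inequality $k_0/2<\tbinom{k_0+1}{2}-m$ (a restatement of $f_1<e_0$) against a restatement of $q_1(v)>0$. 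No asymptotics in $v$ and no case-by-case verification are needed. (Even there the algebra is delicate --- the paper's displayed reduction appears to write $m-\tbinom{k_0+1}{2}+k_0/2$ where $m-\tbinom{k_0+1}{2}+k_0$ is meant --- which only underscores that this is the step on which the lemma stands or falls; leaving it as a sketch leaves the lemma unproved.)
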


\begin{proof}
For $e_0 \leq e < m$, by Equation (\ref{eqn:diffnearm}), $\DF(v,e)=(m-e)q_0(v) < 0$. This time it is possible that $f_1 < e_1$. In this case, by Lemmas \ref{lemma:0.55} and \ref{lemma:e1f2}, we know that $\DF(v,f_1), \DF(v,e_1)>0$. Also, $m=e_0$ and $R_0=0$, implying $\DF(v,e_0)=0$ and $\DF(v,e)>0$ for all $e_1 \leq e<e_0=m-R_0=m$.

If $e_1 \leq f_1$, by Lemma \ref{lem:linearity}, $\DF(v,e)$ is linear as a function of $e$ on the interval $[f_1,e_0]$. Let $-q_1(v)$ be
the slope of this line. Since $e_1<f_1<e_0\leq m$, then $k=k_0$ and $l=k_0$ in Lemma \ref{lem:linearity}. Thus
$q_1(v)=(-1-2(2k_0-4)^2+(2v-5)^2)/4=q_0(v)+2k_0-4$ and $\DF(v,f_1)=(m-e_0)q_0(v)+(e_0-f_1)q_1(v)$. The line through the two points $(e_0,\DF(v,e_0))$ and $(f_1,\DF(v,f_1))$ crosses the $x$-axis at $m-R_0$. We now show that $f_1 < m-R_0 < e_0$, which in turn proves that $\DF(v,f_1)>0$.

We have
  \begin{eqnarray}\label{eqn:R_0 first}
     m-R_0 & = & e_0 + (m-e_0) \frac{q_0(v)}{q_1(v)} \\
            & = & m -(m-e_0) \frac{2k_0-4}{q_1(v)}\label{eqn:R_0 second}.
  \end{eqnarray}
Since $e_0 \leq m$ and $v>4$, then
\begin{eqnarray}\label{eqn:v_k0}
     k_0  \leq
     \frac{1}{2}+\sqrt{\binom{v}{2}+\frac{1}{4}}<2+\sqrt{\binom{v-2}{2}},
  \end{eqnarray}
  which is equivalent to $q_1(v)>0$. Thus $m-R_0<e_0$ by Equation (\ref{eqn:R_0 first}). To prove $f_1<m-R_0$, according to Equation
  (\ref{eqn:R_0 second}), we need to show
  \begin{eqnarray*}
     (m-e_0)\frac{2k_0-4}{q_1(v)} & < & \binom{k_0+1}{2}-m.
  \end{eqnarray*}
  After multiplying by $q_1(v)$, the last inequality becomes
  \begin{eqnarray*}
      \left(m-\binom{k_0+1}{2} +\frac{k_0}{2} \right)\left(2k_0-4\right) & < &
      \left(\binom{k_0+1}{2}-m\right)\left((v-2)(v-3)-2(k_0-2)^2\right),
  \end{eqnarray*}
  which is equivalent to
\begin{eqnarray*}
      \frac {k_0}{2}(2k_0-4) & < & \left(\binom{k_0+1}{2}-m\right)\left((v-2)(v-3)-2(k_0-2)(k_0-3)\right).
  \end{eqnarray*}
Since $f_1 < e_0$ we know that $k_0/2<\binom{k_0+1}{2}-m$. Also,
Inequality (\ref{eqn:v_k0}) is equivalent to
$2k_0-4<(v-2)(v-3)-2(k_0-2)(k_0-3)$. Multiplying these two
inequalities yields the result.
\end{proof}

The previous lemma provides a proof of part 2 of Theorem
\ref{thm:main3}.

The expression for $m-R_0$ is sometimes an integer. Those $v<1000$ for which $m-R_0$ is an integer are 14, 17, 21, 120, 224, 309, 376, 393, 428, 461, 529, 648, 697, and 801.

In the remaining part of this section, we prove Corollaries \ref{cor:uniform} and \ref{cor:strictuniform}.

\begin{lemma}
   Assume that $v>4$ and $q_0(v)<0$. Then $R_0 \leq \alpha v$ where $\alpha=1-\sqrt{2}/2$.
\end{lemma}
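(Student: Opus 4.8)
The plan is to squeeze $R_0$ between $0$ and the quantity $x:=v-k_0-\tfrac12$, and then to show $x\le\alpha v$; both halves will come out of the single quadratic inequality that encodes the hypothesis $q_0(v)<0$. First I would put $R_0$ in workable shape. Writing $b_0=m-e_0$ and recalling that the denominator of $R_0$ equals $4q_1$, where $q_1=q_0(v)+2k_0-4$ and $q_1>0$ by Inequality (\ref{eqn:v_k0}), we have
\[
R_0=\frac{2b_0(k_0-2)}{q_1}.
\]
Expanding the definition of $q_0$ gives $q_0(v)=v^2-5v-2k_0^2+6k_0+2$, and using $4b_0=v(v-1)-2k_0(k_0-1)$ this can be rewritten, together with the relation for $q_1$, as
\[
q_0(v)=4(b_0-x),\qquad q_1=2(k_0-2)+q_0(v),
\]
so the hypothesis $q_0(v)<0$ is exactly $b_0<x$. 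Since $R_0\ge0$, the lemma follows from the two inequalities (I)\ $R_0\le x$ and (II)\ $x\le\alpha v$.

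Next I would translate (I) and (II) into arithmetic conditions on $k_0$ and $v$. Because $q_1>0$, (I) is equivalent to $2b_0(k_0-2)\le x\,q_1$; substituting $q_1=2(k_0-2)+4(b_0-x)$ this rearranges to
\[
(x-b_0)\bigl(k_0-2-2x\bigr)\ge0.
\]
As $x-b_0>0$ under our hypothesis, (I) holds if and only if $k_0-2\ge 2x=2v-2k_0-1$, that is, if and only if $3k_0\ge 2v+1$. For (II), substituting $\alpha=1-\tfrac{\sqrt2}2$ shows that $x\le\alpha v$ is equivalent to $k_0\ge\tfrac{\sqrt2}2v-\tfrac12$, i.e.\ to $v\le\sqrt2\,k_0+\tfrac{\sqrt2}2$.

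Both conditions flow from the hypothesis. The polynomial form of $q_0$ shows $q_0(v)<0$ is precisely
\[
v^2-5v+2<2k_0^2-6k_0,\qquad(\ast)
\]
and, reading $(\ast)$ as a quadratic in $v$ with positive leading coefficient, $v<\tfrac12\bigl(5+\sqrt{8k_0^2-24k_0+17}\bigr)$. To obtain (II) I would verify $\sqrt{8k_0^2-24k_0+17}\le 2\sqrt2\,k_0+\sqrt2-5$ (the right side is nonnegative for $k_0\ge2$) by squaring; the squared inequality reduces to $(32-20\sqrt2)k_0\ge10\sqrt2-10$, which holds for all $k_0\ge2$, and yields $v<\sqrt2\,k_0+\tfrac{\sqrt2}2$. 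To obtain (I) I would verify $\sqrt{8k_0^2-24k_0+17}\le3k_0-4$ (squaring reduces this to $1\le k_0^2$), which gives $v<\tfrac12(3k_0+1)$ and hence $2v\le 3k_0$. The only remaining possibility is the boundary $2v=3k_0$; then $3\mid v$, so $v=3s$ and $k_0=2s$, whence $b_0=\tfrac14 s(s+1)$ and $x=s-\tfrac12$, and $q_0(v)<0$ becomes $(s-1)(s-2)<0$, which no integer $s$ satisfies. Thus $2v<3k_0$, i.e.\ $3k_0\ge2v+1$, proving (I). Combining (I) and (II) gives $R_0\le x\le\alpha v$.

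The only delicate point is (I): the square-root bound yields just $2v\le 3k_0$, one unit short of the required $3k_0\ge2v+1$, so the single boundary case $2v=3k_0$ has to be excluded separately, which the substitution $v=3s,\ k_0=2s$ does in one line. The two squaring steps are elementary but must be checked down to the smallest admissible value $k_0=2$ (equivalently $v>4$); since both squared inequalities hold there with room to spare, no separate treatment of small $v$ is needed.
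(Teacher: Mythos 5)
Your proof is correct, and it takes a genuinely different route from the paper's. The paper proves $\alpha v\,q_1(v,k_0)-(m-e_0)(2k_0-4)>0$ by treating $k_0$ as a real variable $x$ on the interval forced by $q_0(v)<0$ and $e_0\le m$, showing the relevant function is convex with positive derivative at the left endpoint for $v\ge 11$, and then checking $v=7,10$ by hand. You instead interpolate the explicit quantity $x=v-k_0-\tfrac12$ and prove $R_0\le x\le\alpha v$ by pure algebra: the identity $q_0(v)=4(b_0-x)$ turns $R_0\le x$ into $(x-b_0)(k_0-2-2x)\ge 0$, which (given $x>b_0$) reduces to the integer inequality $3k_0\ge 2v+1$, and both this and $x\le\alpha v$ are extracted from the single quadratic $q_0(v)<0$ by solving for $v$ in terms of $k_0$. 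I checked the key computations: $q_0(v)=v^2-5v-2k_0^2+6k_0+2=4(b_0-x)$, the rearrangement to $(x-b_0)(k_0-2-2x)\ge0$, both squaring steps (valid since $3k_0-4>0$ and $2\sqrt2 k_0+\sqrt2-5>0$ for $k_0\ge2$, and $k_0\ge3$ whenever $v>4$), and the exclusion of the boundary case $2v=3k_0$ via $q_0=(s-1)(s-2)\ge0$ — all are correct, and on the extremal cases $v=7,10$ one indeed gets $R_0=x$ with $3k_0=2v+1$, confirming your inequality (I) is tight. What your approach buys: no numerical verification of small $v$ at all (the paper must check $v=7,10$ separately), and the intermediate bound $R_0\le v-k_0-\tfrac12$ is cleaner and arguably more informative than $R_0\le\alpha v$ itself, since it exposes $\alpha v$ as a consequence of the $k_0$-versus-$v$ Pell-type relation. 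What the paper's approach buys is that the same convexity template is reused elsewhere in their Section 6; also note your appeal to $q_1>0$ via Inequality (\ref{eqn:v_k0}) is legitimate and non-circular, as that fact is derived in the paper from $e_0\le m$ and $v>4$ only. The remark ``since $R_0\ge0$'' is superfluous — the conclusion follows from (I) and (II) alone — but harmless.
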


\begin{proof}
   We show that $R_0 \leq \alpha v$ for $v>4$. Recall that
   \begin{eqnarray*}
     R_0 & = & \frac{(m-e_0)(2k_0-4)}{q_1(v,k_0)}.
   \end{eqnarray*}
   Thus we need to show
   \begin{eqnarray*}
    \alpha v q_1(v,k_0) - (m-e_0)(2k_0-4) > 0.
   \end{eqnarray*}
   Define the function $h(x)=\alpha vq_1(v,x)-(m-\binom{x}{2})(2x-4)$. The interval for $x$ is limited by the condition that
   $q_0(v)<0$ which implies that
   \begin{eqnarray*}
      i_1:=\frac {\sqrt{2}}{2} v-\frac{5\sqrt{2}}{4}+\frac{3}{2} & < & k_0.
   \end{eqnarray*}
   Furthermore, since $e_0 \leq m$, we know that $i_2:=(\sqrt{2}/2)v+1/2 > k_0$. We show that $h(x)$ is increasing on $I:=[i_1,i_2]$.
   Note that, since $v>4$,
   \begin{eqnarray*}
       h''(x) & = & -6 -(4-2\sqrt{2})v + 6 x > 0
   \end{eqnarray*}
   for $x \in I$. Hence $h(x)$ is concave up on $I$.
   Furthermore
   \begin{eqnarray*}
       h'(i_1) & = & \left(3-2 \sqrt{2}\right)
       v^2+\left(-10+\frac{11}{2}
       \sqrt{2}\right) v-\frac{15}{4} \sqrt{2}+\frac{73}{8} > 0
   \end{eqnarray*}
   for $v \geq 11$, and hence
   \begin{eqnarray*}
     h(x) & > & h(i_1) \\
     & = & \frac{1}{32} \left(\left(-72+58 \sqrt{2}\right) v+23
     \left(6-5 \sqrt{2}\right)\right)>0
   \end{eqnarray*}
   for $v \geq 11$.
   The only values of $v$ greater than 4 and smaller than 11 for which $q_0(v)<0$ are
   $v=7,10$. The result is easily verified in those two cases.
\end{proof}

How good is the bound $R_0 \leq \alpha v$? Suppose there is a parameter $\beta$ such that $R_0\leq \beta v$ with $\beta<\alpha$.
Assume $q_0(v)=-2$. There are infinitely many values of $v$ for which this is true (see Section \ref{sec:Pell}). In all of those cases $k_0(v)=1/2\sqrt{(9+(2v-5)^2)/2}+3/2$. We have the following
\begin{eqnarray*}
(\beta v q_1(v) - (m-e_0)(2k_0-4))/v^2 & \rightarrow &  \sqrt{2}
\beta- \sqrt{2}+1 \geq 0
\end{eqnarray*}
as $v \rightarrow \infty$.   Thus $\beta \geq \alpha$ and hence $\alpha$ the greatest number for which the bound on $R_0$ holds.

Since $S(v,e) \geq C(v,e)$ for all $1 \leq e \leq m-R_0$, we proved Corollary \ref{cor:uniform}.

To prove Corollary \ref{cor:strictuniform}, we need to investigate the other non-trivial case of equality in Theorem \ref{thm:main3}.
It occurs when $e=e_0$ and $(2v-3)^2-2(2k_0-1)^2=-1,7$. Notice that this implies
\begin{eqnarray*}
m-e_0 & = & \frac{1}{16}\left( (2v-1)^2-2(2k_0-1)^2+1 \right) \\
& = & \frac{v}{2} \text{ or } \frac{v-1}{2}.
\end{eqnarray*}
There are infinitely many values of $v$ such that $(2v-3)^2-2(2k_0-1)^2=-1$, and infinitely many values of $v$ such
that $(2v-3)^2-2(2k_0-1)^2=7$ (see Section \ref{sec:Pell}). Thus the best we can say is that $S(v,e)>C(v,e)$ for all $4 \leq e < m - v/2$, and Corollary
\ref{cor:strictuniform} is proved.

\section{Proof of Corollary \ref{cor:density}}
Recall that for each $v$, $k_0(v)=k_0$  is the unique positive integer such that
\[
\binom{k_0}{2} \leq \frac{1}{2} \binom{v}{2} < \binom{k_0+1}{2}.
\]
It follows that
\begin{equation} \label{eqn:between}
-1 \leq (2v-1)^2-2(2k_0-1)^2, \text{ and } (2v-1)^2 -2(2k_0+1)^2
\leq -17.
\end{equation}
Let us restrict our attention to the parts of the hyperbolas
\[
H_{\text{low}}: (2v-1)^2-2(2k-1)^2=-1, \quad H_{\text{high}}:
(2v-1)^2-2(2k+1)^2=-17
\]
that occupy the first quadrant as shown in Figure \ref{fig:hyperb}.
Then each lattice point, $(v,k_0)$ is in the closed region bounded by $H_{\text{low}}$ below and $H_{\text{high}}$ above.
\begin{figure}[h]
\begin{center}
\epsfig{file=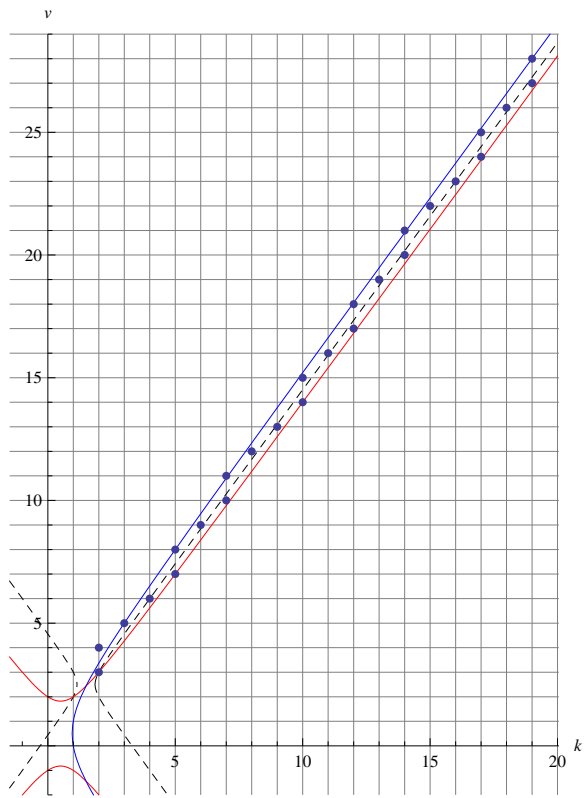,width=3in}
\caption{Hyperbolas
$(2v-1)^2-2(2k-1)^2=-1$, $(2v-1)^2-2(2k+1)^2 = -17$, $(2v-5)^2
-2(2k-3)^2=-1$} \label{fig:hyperb}
\end{center}
\end{figure}
Furthermore, the sign of the quadratic form $(2v-5)^2-2(2k-3)^2+1$ determines whether the \qs  graph is optimal in $\G(v,e)$ for all $0 \leq e \leq m$.  By Theorem \ref{thm:main3}, if $(2v-5)^2-2(2k-3)^2+1\geq 0$, then $S(v,e) \geq C(v,e)$ (and the \qs graph is optimal) for $0 \leq e \leq m$.  Thus, if the lattice point $(v,k)$ is between $H_{\text{high}}$ and the hyperbola
\[
H: (2v-5)^2-2(2k-3)^2 = -1,
\]
then the \qs graph is optimal in $\G(v,e)$ for all $0 \leq e \leq
m$.  But if the lattice point $(v,k_0)$ is between $H$ and
$H_{\text{low}}$, then there exists a value of $e$ in the interval
$4 \leq e \leq m$ such that the \qc graph is optimal and the \qs
graph is not optimal.  Of course, if the lattice point $(v,k_0)$ is
on $H$, then the \qs graph is optimal for all $0 \leq e \leq m$ but
the \qc graph is also optimal for $\binom{k_0}{2} \leq e \leq m$.
Apparently, the density limit
\[
\lim_{v \rightarrow \infty} \frac{n(v)}{v}
\]
from Corollary \ref{cor:density} depends on the density of lattice
points $(v,k)$ in the region between $H_{\text{high}}$ and $H$.

We can give a heuristic argument to suggest that the limit is
$2-\sqrt{2}$.  The asymptotes for the three hyperbolas are
\begin{eqnarray*}
A: v-\frac{5}{2} &=& \sqrt{2}\left(k-\frac{3}{2}\right) \\
A_{\text{low}}: v-\frac{1}{2}&=&\sqrt{2} \left(k-\frac{1}{2} \right)\\
A_{\text{high}}: v-\frac{1}{2} &=& \sqrt{2} \left( k+\frac{1}{2} \right),
\end{eqnarray*}
and intersect the $k$-axis at
\begin{eqnarray*}
k&=&\frac{6-5\sqrt{2}}{4}\\\\
k_{\text{low}}&=&\frac{2-\sqrt{2}}{4}\\
k_{\text{high}}&=&\frac{-2-\sqrt{2}}{4}.
\end{eqnarray*}
The horizontal distance between $A_{\text{high}}$ and $A_{\text{low}}$ is
\[
k_{\text{low}} - k_{\text{high}}=1
\]
and the horizontal distance between $A_{\text{high}}$ and $A$ is
\[
k-k_{\text{high}} = 2-\sqrt{2}.
\]

To make the plausibility argument rigorous, we need a theorem of Weyl \cite[Satz 13, page 334]{We}, \cite[page 92]{Ko}:

For any real number $r$, let $\langle r \rangle$ denote the fractional part of $r$.  That is, $\langle r \rangle$ is the unique number in the half-open interval $[0,1)$ such that $r-\langle r \rangle$ is an integer.
Now let $\beta$ be an irrational real number. Then the sequence $\langle n\beta \rangle$, $n=1,2,3, \ldots$, is uniformly distributed in the interval $[0,1)$.

In our problem, the point  $(v, k_0)$ is between the hyperbolas $H_{\text{low}}$ and $H_{\text{high}}$ and, with few exceptions, $(v, k_0)$ is also between the asymptotes $A_{\text{low}}$ and $A_{\text{high}}$.  To be precise, suppose that $(v,k_0)$ satisfies Inequalities (\ref{eqn:between}).  We need an easy fact from number theory here.  Namely that $y^2-2x^2 \equiv -1 \pmod{8}$ for all odd integers $x,y$.  Thus
\[
 2(2k_0-1)^2 < (2v-1)^2 < 2(2k_0+1)^2,
\]
unless $(2v-1)^2 -2(2k_0-1)^2 =-1$.  (These are the exceptions.)  But for all other points $(v,k_0)$ we have
\[
\sqrt{2} \left( k_0-\frac{1}{2} \right) < v-\frac{1}{2} < \sqrt{2} \left( k_0+\frac{1}{2} \right).
\]
Thus
\[
0<\frac{\sqrt{2}}{2} \left( v-\frac{1}{2} \right) + \frac{1}{2} - k_0 <1
\]
and so
\[
\frac{\sqrt{2}}{2} \left( v-\frac{1}{2} \right) + \frac{1}{2} - k_0=
\left\langle  \frac{\sqrt{2}}{2} \left( v-\frac{1}{2} \right) + \frac{1}{2} \right\rangle.
\]
Next, consider the condition $q_0(v,k_0) \geq 0$, which is equivalent to
\[
(2v-5)^2-2(2k_0-3)^2 \geq -1.
\]
Unless $(2v-5)^2-2(2k_0-3)^2 = -1$,  $q_0(v,k_0) \geq 0$ is equivalent to
\[
\left\langle  \frac{\sqrt{2}}{2} \left( v-\frac{1}{2} \right) + \frac{1}{2} \right\rangle>\sqrt{2}-1.
\]
To summarize, if $(v,k_0)$ does not satisfy either of these Pell's Equations
\[
(2v-1)^2 -2(2k_0-1)^2 =-1, \quad
(2v-5)^2-2(2k_0-3)^2 = -1,
\]
then $q_0(v,k_0) \geq 0$ if and only if
\[
\sqrt{2}-1< \left\langle  \frac{\sqrt{2}}{2} \left( v-\frac{1}{2} \right) + \frac{1}{2} \right\rangle<1.
\]
From Weyl's Theorem, we know that the fractional part in the above inequality is uniformly distributed in the interval $[0,1)$.  Since the density of the values of $v$ for which $(v,k_0)$ is a solution to one of the Pell's Equations above is zero, then $\lim_{v \rightarrow \infty} n(v)/v=1-(\sqrt{2}-1)=2-\sqrt{2}$. The proof of Corollary \ref{cor:density} is complete.

\section{Proofs of Theorems \ref{thm:family5_6}, \ref{thm:family3}, and \ref{thm:family4}}

We first prove Theorem \ref{thm:family5_6}. If $\pi _{1.2}$ and $\pi _{1.3}$ are optimal partitions, then according to Theorem \ref{thm:main2}, $j^{\prime }=3$, $k^{\prime}\geq j^{\prime }+2=5$, and so $v\geq 2k^{\prime }-j^{\prime }\geq7$. In addition the quasi-star partition is optimal, that is, $S(v,e)\geq C(v,e)$. Thus by Corollary \ref{cor:strictuniform}, either $e\geq \binom{v}{2}-3$ or $e\leq m+v/2=\binom{v}{2}/2+v/2$.
If $e\geq \tbinom{v}{2}-3$ and since $j^\prime=3$, then $k^\prime \leq 3$ contradicting $k^\prime \geq 5$. Thus $e\leq \frac{1}{2}\tbinom{v}{2}+\frac{v}{2}$. Since $2k^\prime-3\leq v$ and $e=\tbinom{v}{2}-\tbinom{k^{\prime }+1}{2}+3$, then
\[
3+\frac{1}{2}\binom{v}{2}\leq \binom{k^{\prime
}+1}{2}+\frac{v}{2}\leq \binom{\left( v+3\right)
/2+1}{2}+\frac{v}{2}.
\]
Therefore $7\leq v\leq 13$. In this range of $v$, the only pairs $(v,e)$ that satisfy all the required inequalities are $(v,e)=(7,9)$
or $(9,18)$.

Using the relation between a graph and its complement described below Equation (\ref{eqn:SCconnection}), we conclude that if $\pi _{2.2}$
and $\pi_{2.3}$ are optimal partitions, then $(v,e)=(7,12)$ or $(9,18)$.

As a consequence, we see that the pair $(9,18)$ is the only
candidate to have six different optimal partitions. This is in fact
is the case. The six graphs and partitions are depicted in Figure
\ref{fig:sixoptimal}. We note here that Byer \cite{By} also observed
that the pair $(v,e)=(9,18)$ yields six different optimal graphs.
Another consequence is that the pairs $(7,9)$ and $(7,12)$ are the
only candidates to have five different optimal partitions. For the
pair $(7,9)$, the partitions $\pi _{1.1},\pi _{1.2},\pi _{1.3},\pi
_{2.1}$ and $\pi _{2.2}$ all exist and are optimal. However,
$\pi_{1.3}=\pi _{2.2}$. Thus the pair $(7,9)$ only has four distinct
optimal partitions. Similarly, for the pair $(7,12)$ the partitions
$\pi _{1.1},\pi _{1.2},\pi_{2.1},\pi _{2.2}$ and $\pi _{2.3}$ all
exist and are optimal, but $\pi_{1.2}=\pi _{2.3}$. So there are no
pairs with five optimal partitions, and thus all other pairs have at
most four optimal partitions. Moreover, $S(v,e)=C(v,e)$ is a
necessary condition to have more than two optimal partitions, since
any pair other than $(7,9)$ or $(7,12)$ must satisfy that both
$\pi_{1.1}$ and $\pi _{2.1}$ are optimal. The proof of Theorem
\ref{thm:family5_6} is complete.

\begin{figure}[h]
\begin{center}
\epsfig{file=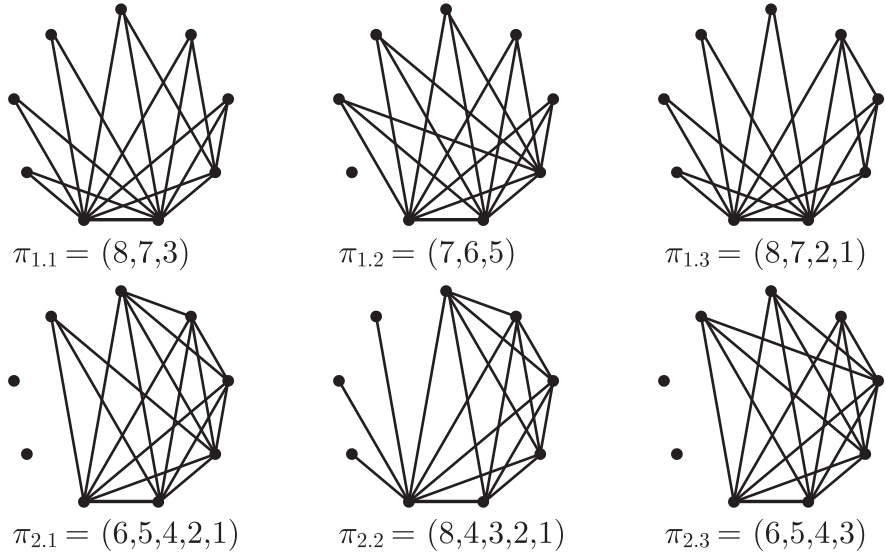,width=4.6in} \caption{$(v,e)=(9,18)$ is
the only pair with six different optimal graphs. For all graphs,
$P_2(\thr(\pi_{i.j}))=\max(v,e)=C(v,e)=S(v,e)=192$}
\label{fig:sixoptimal}
\end{center}
\end{figure}

In Theorem \ref{thm:family3}, $e=\binom{k}{2}=\binom{k+1}{2}-k$ and thus $j=k$. Note that, if $v>5$ and $k$ satisfy Equation (\ref{eqn:family3}), then $k+2<v<2k-1$, and so $k\geq 4$. Thus $e=\binom{v}{2}-\binom{k+2}{2}+(2k+2-v)$ with $4\leq 2k+2-v\leq k+1$, that is, $k^\prime =k+1$ and $j^\prime =2k+2-v$. Hence, $\pi _{1.1}=(v-1,v-2,\ldots ,k+2,2k+2-v)$ and $\pi _{2.1}=(k-1,...,1)$ (which always exist) are different because $2k+2-v\geq 4>1$. The partition $\pi _{1.2}=(v-2,...,k)$ exists because $k\leq v-3$, and it is different to $\pi _{2.1}$ because $k\geq 4>1$ ($\pi_{1.2}\neq \pi _{1.1}$ by definition). Finally, the partitions $\pi_{1.3},\pi _{2.2},$ and $\pi _{2.3}$ do not exist because $j^\prime =2k+2-v\geq 4,k+1>k-1=2k-j-1,$ and $j=k\geq 4$, respectively. Theorem \ref{thm:family3} is proved.

Now, if $v$ and $k$ satisfy Equation (\ref{eqn:family4}), then $\frac{1}{2}\binom{v}{2}=\binom{k+1}{2}-3$. Moreover, since $v>9$, then $k>(v+3)/2$. Hence, in Theorem \ref{thm:family4}, $e=m=\frac{1}{2} \binom{v}{2}=\binom{k+1}{2}-3=\binom{v}{2}-\binom{k+1}{2}+3$, with $k\geq 3$ because $v>1$. That is, $k=k^\prime$ and $j=j^\prime =3$. Thus $\pi _{1.1}=(v-1,v-2,...,k+1,3),\pi _{1.3}=(v-1,v-2,...,k+1,2,1),\pi
_{2.1}=(k-1,k-2,...,4,3)$, and $\pi _{2.3}=(k-1,k-2,...,4,2,1)$ all exist and are different because $k=v$ does not yield a solution to (\ref{eqn:family4}). Also $\pi _{1.2}$ and $\pi _{2.2}$ do not exist because $2k-j-1=2k^\prime -j^\prime -1=2k-4>v-1$. Theorem \ref{thm:family4} is proved.

\section{Pell's Equation} \label{sec:Pell}

Pell's Equation
\begin{equation} \label{eqn:Pell}
V^2-2J^2=P,
\end{equation}
where $P \equiv -1 \pmod{8}$, appears several times in this paper. For example, a condition for the equality  of $S(v,e)$ and $C(v,e)$ in Theorem \ref{thm:main3}
involves the Pell's Equation $(2v-5)^2-2(2k_0-3)^2=-1$.  And in Theorem \ref{thm:family4}, we have $(2v-1)^2-2(2k+1)^2=-49$.  There
are infinitely many solutions to each of these equations. In each instance, $V$ and $J$ in Equation (\ref{eqn:Pell}) are positive
odd integers and $P\equiv -1 \pmod{8}$ .  The following lemma describes the solutions to the fundamental Pell's Equation.
\begin{lemma}\cite{HW} \label{lem:Pell}
All positive integral solutions of
\begin{equation} \label{eqn:Pell-1}
V^2-2J^2=-1
\end{equation}
are given by
\[
V+J\sqrt{2} = (1+\sqrt{2})(3+2\sqrt{2})^n,
\]
where $n$ is a nonnegative integer.

\end{lemma}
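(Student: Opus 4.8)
The plan is to work inside the ring $\mathbb{Z}[\sqrt{2}]$ equipped with the multiplicative norm $N(V+J\sqrt{2})=V^2-2J^2$, so that $N(\alpha\beta)=N(\alpha)N(\beta)$. Under this dictionary a positive integral solution of $V^2-2J^2=-1$ is exactly an element $\alpha=V+J\sqrt{2}$ with integers $V,J\geq 1$ and $N(\alpha)=-1$ (note $V=0$ and $J=0$ are both impossible), while the asserted family is the set of odd powers of the fundamental unit, since $(1+\sqrt{2})(3+2\sqrt{2})^n=(1+\sqrt{2})^{2n+1}$. Thus the lemma reduces to showing that every positive solution is such an odd power, and I would split this into the easy forward inclusion and a descent for the converse.

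The forward inclusion is a short induction. Since $N(1+\sqrt{2})=-1$ and $N(3+2\sqrt{2})=1$, multiplicativity gives $N\bigl((1+\sqrt{2})(3+2\sqrt{2})^n\bigr)=(-1)(1)^n=-1$; and because $(V+J\sqrt{2})(3+2\sqrt{2})=(3V+4J)+(2V+3J)\sqrt{2}$, both coordinates remain positive integers once they are positive. Hence each $(1+\sqrt{2})(3+2\sqrt{2})^n$ is a genuine positive solution.

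For the converse I would run an infinite descent using multiplication by the inverse unit $3-2\sqrt{2}=(3+2\sqrt{2})^{-1}$, which satisfies $0<3-2\sqrt{2}<1$. Given a solution $\alpha=V+J\sqrt{2}>1+\sqrt{2}$, set $\alpha'=\alpha(3-2\sqrt{2})=(3V-4J)+(3J-2V)\sqrt{2}=:V'+J'\sqrt{2}$. Multiplicativity gives $N(\alpha')=-1$, and $\alpha'<\alpha$ because $3-2\sqrt{2}<1$, so the size strictly decreases. The crux is positivity: from $V^2=2J^2-1$ one checks that $J'=3J-2V>0$ always holds, while $V'=3V-4J>0$ holds precisely when $J\geq 3$; and the only positive solution with $J\leq 2$ is the fundamental one $(1,1)$, since $J=2$ forces $V^2=7$. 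Therefore every solution other than $1+\sqrt{2}$ maps under this step to a strictly smaller positive solution, and iterating divides the value by $3+2\sqrt{2}$ each time until it reaches $1+\sqrt{2}$. Hence $\alpha(3-2\sqrt{2})^{k}=1+\sqrt{2}$ for some $k\geq 0$, i.e. $\alpha=(1+\sqrt{2})^{2k+1}=(1+\sqrt{2})(3+2\sqrt{2})^{k}$, as claimed.

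I expect the main obstacle to be making the descent airtight, namely confirming that the reduction step never leaves the positive cone except at the base case. That is exactly the sign analysis of $V'=3V-4J$ and $J'=3J-2V$ above, together with the observation that no solution lies strictly between $1+\sqrt{2}$ and $(1+\sqrt{2})^{3}=7+5\sqrt{2}$. An equally standard alternative avoids this case analysis by invoking the continued fraction $\sqrt{2}=[1;\overline{2}]$: its convergents $p_n/q_n$ satisfy $p_n^{2}-2q_n^{2}=(-1)^{n+1}$ and exhaust all solutions of $V^2-2J^2=\pm1$, so the norm $-1$ solutions are precisely those at the even-indexed convergents, which are again the odd powers of $1+\sqrt{2}$.
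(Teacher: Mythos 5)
Your proof is correct. Note, however, that the paper does not prove this lemma at all: it is stated with a citation to Hardy and Wright and used as a known fact, so there is no internal argument to compare against. What you supply is the standard self-contained treatment: identifying solutions with elements of norm $-1$ in $\mathbb{Z}[\sqrt{2}]$, verifying that $(1+\sqrt{2})(3+2\sqrt{2})^n=(1+\sqrt{2})^{2n+1}$ has norm $-1$ and positive coordinates, and then descending by the inverse unit $3-2\sqrt{2}$. The key computations all check out: $J'=3J-2V>0$ is automatic since $9J^2>4V^2=8J^2-4$; $V'=3V-4J>0$ reduces to $2J^2>9$, i.e.\ $J\geq 3$; and $J=2$ gives $V^2=7$, which is impossible, so the only solution outside the descent's domain is $(1,1)$. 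Termination is also fine, since $J'=3J-2V<J$ whenever $J\geq 2$ (because then $V>J$), so the descent strictly decreases $J$ and must bottom out at the fundamental solution. Your argument buys the reader a complete elementary proof where the paper offers only a pointer to the literature; the continued-fraction alternative you sketch would additionally require the standard Legendre-type criterion guaranteeing that all solutions of $V^2-2J^2=\pm 1$ arise from convergents of $\sqrt{2}$, so the descent route is the more self-contained of the two.
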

It follows from the lemma that if $(V,J)$ is a solution to Equation (\ref{eqn:Pell-1}), then both $V$ and $J$ are odd.  We list the first several
solutions to Equation (\ref{eqn:Pell-1}):
\[
\begin{array}{r|rrrrr}
V&1&7&41&239&1393\\ \hline J&1&5&29&169&985
\end{array}
\]

Now let us consider the equation $(2v-3)^2-2(2k-1)^2=-1$ from
Theorem \ref{thm:family3}.  Since all of the positive solutions
$(V,J)$ consist of odd integers, the pair $(v,k)$ defined by
\[
v=\frac{V+3}{2}, \qquad k=\frac{J+1}{2}
\]
are integers and satisfy Equation (\ref{eqn:family3}).  Thus there
is an infinite family of values for $v>5$ such that there are
exactly 3 optimal partitions in $\ds(v,e)$ where $e=\binom{k}{2}$.
The following is a list of the first three values of $v,k,e$ in this
family:
\[
\begin{array}{r|rrrrr}
v&22&121&698\\ \hline k&15&85&493\\ \hline e&105&3570&121278
\end{array}
\]

Next, consider Equation   (\ref{eqn:family4}) from Theorem
\ref{thm:family4} and the corresponding Pell's Equation:
\[
V^2-2J^2=-49.
\]
A simple argument using the norm function, $N(V+J\sqrt{2})=V^2-2J^2$
shows that all positive integral solutions are given by
\[
\begin{array}{rrrrrr}
V+J\sqrt{2}& = & (1+5\sqrt{2})(3+2\sqrt{2})^n, &(7+7\sqrt{2})(3+2\sqrt{2})^n,& or & (17+13\sqrt{2})(3+2\sqrt{2})^n,
\end{array}
\]
where $n$ is a nonnegative integer.  The first several solutions are
\[
\begin{array}{r|rrrrrrr}
V&1&7&17&23&49&103&137\\ \hline J&5&7&13&17&35&73&97
\end{array}.
\]
Thus the pairs $(v,k)$, defined by
\[
v=\frac{V+1}{2}, \qquad k=\frac{J-1}{2}
\]
satisfy Equation (\ref{eqn:family4}).  The first three members,
$(v,k,e)$  of this  infinite family of partitions $\ds(v,e)$ with
$v>9$, $e=\binom{v}{2}/2$, and exactly 4 optimal partitions are:

\[
\begin{array}{r|rrrrrr}
v&12&25&52&69\\ \hline k&8&17&36&48\\ \hline e&33&150&663&1173
\end{array}
\]

The Pell's Equation
\begin{equation} \label{eqn:Pellmain3}
4q_0(v)=(2v-5)^2-2(2k_0-3)^2+1=0
\end{equation}
appears in Theorem \ref{thm:main3}.  Here again there are infinitely many solutions to the equation $(2v-5)^2-2(2k-3)^2=-1$ starting
with:
\[
\begin{array}{r|rrrrrrrr}
v&2&2&3&3&6&23&122\\ \hline k&1&2&1&2&4&16&86
\end{array}.
\]

The proof of Corollary \ref{cor:uniform} requires infinitely many solutions to the equation $q_0(v)=-2$, which is equivalent to the Pell's Equation
\begin{equation} \label{eqn:PellCor1}
(2v-5)^2-2(2k-3)^2=-9.
\end{equation}

All its positive integral solutions are given by
\[
v=\frac{V+5}{2}, \qquad k=\frac{J+3}{2}, \qquad V+J\sqrt{2}=(3+3\sqrt{2})(3+2\sqrt{2})^n,
\]
where $n$ is a nonnegative integer. The first several solutions are
\[
\begin{array}{r|rrrrrrrr}
v&3&12&63&360&2091\\ \hline k&2&8&44&254&1478
\end{array}
\]
The proof of Corollary \ref{cor:strictuniform} requires infinitely many solutions to the Pell's Equation
\begin{equation} \label{eqn:PellCor2a}
(2v-3)^2-2(2k-1)^2=7,
\end{equation}
and infinitely many solutions to the Pell's Equation
\begin{equation} \label{eqn:PellCor2b}
(2v-3)^2-2(2k-1)^2=-1.
\end{equation}
All positive integral solutions to (\ref{eqn:PellCor2a}) are given by
\[
v=\frac{V+3}{2}, \qquad k=\frac{J+1}{2}, \qquad V+J\sqrt{2}=(3+\sqrt{2})(3+2\sqrt{2})^n, \qquad (5+3\sqrt{2})(3+2\sqrt{2})^n,
\]
where $n$ is a nonnegative integer. The first several solutions are
\[
\begin{array}{r|rrrrrrrr}
v&3&4&8&15&39&80\\ \hline k&1&2&5&10&27&56
\end{array}
\]
We have shown that Equation (\ref{eqn:PellCor2b}) has infinitely many solutions, as it is the same equation that appears in Theorem \ref{thm:family3}. However, in Corollary \ref{cor:strictuniform}, $k$ must be $k_0$, the unique integer that satisfies Inequality (\ref{eqn:k0}). This condition is also necessary for Equations (\ref{eqn:Pellmain3}), (\ref{eqn:PellCor1}), and (\ref{eqn:PellCor2a}). In other words, we must show that for $v$ large enough, every solution $(v,k)$ to one of the Equations (\ref{eqn:Pellmain3}), (\ref{eqn:PellCor1}), or (\ref{eqn:PellCor2a}), satisfies Inequality
(\ref{eqn:k0}). We do this only for Equation (\ref{eqn:Pellmain3}) as all other cases are similar.

\begin{lemma} \label{lem:k)=k}
Let $(v,k)$ be a positive integral solution to Equation
(\ref{eqn:Pellmain3}) with $v>3$.  Then $(v,k)$ satisfies Inequality
(\ref{eqn:k0}).  That is $k=k_0$.
\end{lemma}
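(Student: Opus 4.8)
The plan is to verify directly that the given solution $(v,k)$ satisfies Inequality~(\ref{eqn:k0}); since $k_0$ is by definition the \emph{unique} integer with $\binom{k_0}{2}\le m<\binom{k_0+1}{2}$, proving $\binom{k}{2}\le m<\binom{k+1}{2}$ forces $k=k_0$. First I would clear the square in Equation~(\ref{eqn:Pellmain3}): expanding $(2v-5)^2=2(2k-3)^2-1$ and simplifying gives $v^2-5v+2=2k^2-6k$, that is,
\[
v(v-1)=2k^2-6k+4v-2.
\]
Since $m=\tfrac12\binom{v}{2}=v(v-1)/4$, the two inequalities to be proved are $2k(k-1)\le v(v-1)$ and $v(v-1)<2k(k+1)$. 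Substituting the identity above, the first collapses to $4v\ge 4k+2$ and the second to $4v<8k+2$; read as inequalities between integers these are exactly $v\ge k+1$ and $v\le 2k$. Thus the lemma reduces to the single pair of bounds $k<v\le 2k$.

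Next I would read these two bounds off the Pell relation. Note first that $v>3$ forces $k\ge3$: since $v\ge4$ we have $(2v-5)^2\ge9$, hence $(2k-3)^2\ge5$, and as $2k-3$ is an odd integer its square is at least $9$, giving $2k-3\ge3$. This parity step is precisely what discards the spurious small solutions such as $(v,k)=(3,1)$ and $(2,2)$, and it explains the hypothesis $v>3$. With $k\ge3$, the lower bound is immediate: $2(2k-3)^2-1>(2k-3)^2$, so $(2v-5)^2>(2k-3)^2$, and taking positive square roots (both $2v-5$ and $2k-3$ are positive) yields $2v-5>2k-3$, i.e.\ $v>k+1$, so certainly $v\ge k+1$. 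For the upper bound, $(2v-5)^2<2(2k-3)^2$ gives $2v-5<\sqrt2\,(2k-3)$; using $\sqrt2<3/2$ a one-line estimate shows $5+\sqrt2\,(2k-3)<4k$ for $k\ge3$, whence $2v<4k$ and $v\le 2k$.

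The conceptual heart is the reduction of the two binomial inequalities to $k<v\le2k$ through the polynomial identity; after that the bounds are elementary estimates, so I do not anticipate a real obstacle. The only points that need care are the integer rounding in the equivalences (each inequality in $v$ must be interpreted between integers, which is how $4v\ge4k+2$ becomes $v\ge k+1$ and $4v<8k+2$ becomes $v\le2k$) and the parity argument pinning down $k\ge3$. As the surrounding text notes, the remaining Equations~(\ref{eqn:PellCor1}) and~(\ref{eqn:PellCor2a}) are handled by the same three-step template---polynomialize, reduce to two linear bounds in $v$, then estimate---with the constant $-1$ replaced by $-9$ and $+7$ respectively.
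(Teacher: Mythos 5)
Your proof is correct and follows essentially the same route as the paper: both use the Pell relation to reduce the two binomial inequalities in~(\ref{eqn:k0}) to the linear bounds $k<v\le 2k$ (the paper does this by adding the vanishing quantity $\left((2v-5)^2-2(2k-3)^2+1\right)/16$ to each difference, obtaining $(v-k)-\tfrac12$ and $2k-v+\tfrac12$, which is your polynomial substitution in different packaging). If anything, your write-up is slightly more complete, since you derive $k<v\le 2k$ from the Pell equation (via the parity argument giving $k\ge3$ and the estimate $\sqrt2<3/2$), whereas the paper simply asserts $k<v<2k$.
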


\begin{proof}
Suppose that $(v,k)$ is a solution to Equation (\ref{eqn:Pellmain3}) with $v>3$. Then $k<v<2k$.
Inequality (\ref{eqn:k0}) consists of two parts, the first of which
is
\[
\binom{k}{2} \leq \frac{1}{2} \binom{v}{2}.
\]
To prove this part, we compute
\[
\frac{1}{2} \binom{v}{2}-\binom{k}{2}=\frac{1}{2} \binom{v}{2}-\binom{k}{2}-\left((2v-5)^2-2(2k-3)^2+1\right)/16=(v-k)-\frac{1}{2}>0.
\]

The second part of Inequality (\ref{eqn:k0}) is
\[
\frac{1}{2} \binom{v}{2} \leq \binom{k+1}{2}.
\]
This time, we have
\[
\binom{k+1}{2}-\frac{1}{2} \binom{v}{2}=\binom{k+1}{2}-\frac{1}{2}
\binom{v}{2}+\left((2v-5)^2-2(2k-3)^2+1\right)/16=2k-v+\frac{1}{2}>0.
\qedhere
\]
\end{proof}

\textbf{Acknowledgement}. We are grateful to an anonymous referee
who made us aware of Byer's work.

\end{sloppypar}
\end{document}